\newcommand{\vpad}[1]{%
  \ensuremath{\underset{\phantom{a}}{\overset{\phantom{a}}{#1}}}
}
\newcommand{\smallpad}[1]{%
  \scalebox{.75}{{\ensuremath{\underset{\phantom{a}}{\overset{\phantom{a}}{#1}}}}}
}
\newcommand{\mybar}[1]{
\raisebox{2pt}{$\mathsf{\overline{#1}}$}
}
\newcommand{\myvec}[1]{\left[\begin{smallmatrix} #1 \end{smallmatrix}\right]}
\DeclareMathAlphabet\mathsf{OT1}{cmss}{m}{n}
\DeclareMathAlphabet\rsfscr{U}{rsfso}{m}{n}
\tikzset{dot/.style={fill, circle, inner sep = 0pt, minimum size=5pt}}
\theoremstyle{plain}
\newtheorem{thm}{Theorem}[section]
\newtheorem{prop}[thm]{Proposition}
\newtheorem{lemma}[thm]{Lemma}
\newtheorem{conj}[thm]{Conjecture}
\newtheorem*{thm*}{Theorem}
\theoremstyle{definition}
\newtheorem{dfn}[thm]{Definition}
\newtheorem{alg}[thm]{Algorithm}
\newtheorem{prob}[thm]{Problem}
\theoremstyle{remark}
\newtheorem{rem}[thm]{Remark}
\numberwithin{equation}{section}
\newcommand{\R}{\mathbb{R}}
\newcommand{\Z}{\mathbb{Z}}
\newcommand{\X}{\mathbf{X}_{pq}}
\newcommand{\XY}{\mathbf{XY}\!_{pq}}
\newcommand{\M}{\rsfscr{M}\!}
\newcommand{\D}{\rsfscr{D}\hspace{-.5pt}}
\newcommand{\I}{\rsfscr{I}}
\newcommand*{\ldblbrace}{\{\mskip-5mu\{}
\newcommand*{\rdblbrace}{\}\mskip-5mu\}}
\newcommand{\mset}[1]{\ldblbrace #1 \rdblbrace}
\title{Homometric subsets of $\Z_n$ with cardinality 5: \\ classification and enumeration}
\author{William Q.~Erickson}
\address{
University of Tennessee at Chattanooga \\ 
615 McCallie Avenue\\
Chattanooga, TN 37403} 
\email{william-erickson01@utc.edu}
\author{Nicholas B.~Jones}
\address{University of North Texas\\
1155 Union Circle\\
Denton, TX 76203} 
\email{nicholas.jones@unt.edu}
\begin{document}

\begin{abstract}
    Two subsets of $\Z_n$ are said to be homometric if they have the same multiset of pairwise cyclic (i.e., Lee) distances.
    Homometric subsets necessarily have the same cardinality, say $k$.
    In this paper, for all positive integers $n$, we classify the homometric subsets of $\Z_n$ with cardinality $k=5$ (modulo cyclic shifts and reflections).
    Our classification consists of six families of homometric pairs, and one family of homometric triples.
    We also give a closed-form generating function that counts these homometric pairs and triples for all~$n$.
    The same problem for $k \leq 4$ was partially solved by Erd\H{o}s and ultimately settled by Rosenblatt--Berman (1984).
    As an immediate application of our result, one obtains an explicit criterion for the solvability of the crystallographic phase retrieval problem, in the setting of binary signals supported on $k=5$ many atoms.
\end{abstract}

\subjclass[2020]{Primary 94A12; Secondary 05A15; 00A65}

\keywords{Phase retrieval, crystallography, homometric sets, generating functions}

\maketitle

\section{Introduction}

\subsection*{The crystallographic binary phase retrieval problem}

X-ray crystallography is the main technique for determining the atomic structure of molecules.
In order to investigate the intrinsic structure of a crystalline substance, an X-ray beam is directed at the substance, which causes the beam to diffract.
The \emph{phase retrieval problem} is to recover the molecular structure of the substance from its diffraction pattern.
This problem admits many different variations.
In one such variant, the unknown structure of a periodic crystal (along one of the lattice vectors defining a unit cell) is given by some subset $S \subseteq \Z_n = \{0, 1, \ldots, n-1\}$.
In this setting, all atoms in the crystal are identical, and $S$ can be viewed as a binary signal (i.e., an indicator function) on $\Z_n$.
The data obtained from the corresponding diffraction pattern are the magnitudes of the values (not the values themselves) of the discrete Fourier transform $\hat{S}$; this explains the term ``phase retrieval,'' since the phase information of the signal is lost in passing from $\hat{S}$ to $|\hat{S}|$.
Thus, in this setting, the essence of the phase retrieval problem is to recover $S$ from $|\hat{S}|$.

\subsection*{Non-retrievability and homometric bracelets}

In the case of $\Z_n$, the recovery of $S$ from $|\hat{S}|$ is only ever possible up to cyclic shifts and reflections (i.e., modulo the action of the dihedral group $D_{2n}$ on the usual circular depiction of $\Z_n$); indeed, if $S$ is obtained from $T$ by a shift and/or reflection, then automatically $|\hat{S}| = |\hat{T}|$.
For this reason, it is most natural to work with the combinatorial objects known as \emph{binary bracelets} of length $n$, which can be viewed as subsets of $\Z_n$ (where the black beads represent the elements of a given subset), such that all cyclic shifts and reflections are considered the same bracelet.
We write $[S]$ to denote the bracelet that has $S$ as one of its representatives (i.e., $[S]$ is the $D_{2n}$-orbit of $S$); the \emph{weight} of $[S]$ is the number of black beads, that is, the cardinality of $S$.
A bracelet $[S]$ is said to be \emph{non-retrievable} if it cannot be recovered from $|\hat{S}|$.
This raises the key question in crystallographic phase retrieval, and in the present paper: \emph{which bracelets $[S]$ are non-retrievable?}

Non-retrievability can be understood more concretely by introducing the notion of homometry.
It is well known~\cite{BENotices}*{p.~1489} that $|\hat{S}| = |\hat{T}|$ if and only if the bracelets $[S]$ and $[T]$ are \emph{homometric}, that is, they have the same multiset of pairwise distances between their black beads.
For example, shown below are two distinct homometric bracelets of length $n=12$ and weight $5$, along with their multisets of distances:

\begin{equation}
\label{example intro}
\begin{array}{ccc}
\begin{tikzpicture}[baseline,every node/.style={scale=.75},scale=1.5]
    \node[anchor=base] at (0,0) {};
    
    \def\n{12}  
    \def\angle{360/\n}

    \draw[thick] (0,0) circle (1);

    \foreach[parse=true] \i in {0,1,...,\n-1} {
    \draw[thick, fill=white, rotate={\i*\angle}] (0,1) circle (.1);
    }

    \foreach[parse=true] \i in {0,1,3,5,6} {
    \draw[thick, fill=black, rotate={\i*\angle}] (0,1) circle (.1);
    }

    \foreach[parse=true] \i in {0,1,...,\n-1} {
    \node (\i) at ({(\i+3)*\angle}:1) {};
    }

    \draw[thick, lightgray] (0) -- (1) node [midway, above] {1};
    
    \draw[thick, lightgray] (0) -- (3) node [near end, below] {3};

    \draw[thick, lightgray] (0) -- (5) node [near start, left, xshift=2pt] {5};

    \draw[thick, lightgray] (0) -- (6) node [midway, right] {6};

    \draw[thick, lightgray] (1) -- (3) node [near start, above left, xshift=-3pt, yshift=3pt] {2};

    \draw[thick, lightgray] (1) -- (5) node [midway, left] {4};

    \draw[thick, lightgray] (1) -- (6) node [near end, left, xshift=2pt] {5};

    \draw[thick, lightgray] (3) -- (5) node [near end, below left, xshift=-3pt, yshift=-3pt] {2};

    \draw[thick, lightgray] (3) -- (6) node [near start, above] {3};

    \draw[thick, lightgray] (5) -- (6) node [midway, below] {1};
    
\end{tikzpicture}
& \qquad\qquad &
\begin{tikzpicture}[baseline,every node/.style={scale=.75},scale=1.5]
    \node[anchor=base] at (0,0) {};
    
    \def\n{12}  
    \def\angle{360/\n}

    \draw[thick] (0,0) circle (1);

    \foreach[parse=true] \i in {0,1,...,\n-1} {
    \draw[thick, fill=white, rotate={\i*\angle}] (0,1) circle (.1);
    }

    \foreach[parse=true] \i in {0,1,2,4,7} {
    \draw[thick, fill=black, rotate={\i*\angle}] (0,1) circle (.1);
    }

    \foreach[parse=true] \i in {0,1,...,\n-1} {
    \node (\i) at ({(\i+3)*\angle}:1) {};
    }

    \draw[thick, lightgray] (0) -- (1) node [midway, above] {1};
    
    \draw[thick, lightgray] (0) -- (2) node [near start, below, yshift=2pt] {2};

    \draw[thick, lightgray] (0) -- (4) node [very near end, right] {4};

    \draw[thick, lightgray] (0) -- (7) node [midway, right] {5};

    \draw[thick, lightgray] (1) -- (2) node [near start, above left, xshift=-3pt, yshift=3pt] {1};

    \draw[thick, lightgray] (1) -- (4) node [near start, right, xshift=-3pt] {3};

    \draw[thick, lightgray] (1) -- (7) node [midway, right] {6};

    \draw[thick, lightgray] (2) -- (4) node [near start, left, xshift=-5pt] {2};

    \draw[thick, lightgray] (2) -- (7) node [midway, right] {5};

    \draw[thick, lightgray] (4) -- (7) node [midway, below] {3};
    
\end{tikzpicture} \\
\text{Distances: } \mset{1,1,2,2,3,3,4,5,5,6} & & \text{Distances: } \mset{1,1,2,2,3,3,4,5,5,6}
\end{array}
\end{equation}

\noindent (These are typically called \emph{Patterson diagrams}, after~\cite{Patterson}.)
It follows that a bracelet is non-retrievable if and only if it is homometric with a different bracelet; thus the bracelets shown in~\eqref{example intro} are both non-retrievable.
In this way, classifying non-retrievable bracelets is the same as describing the equivalence classes of bracelets with respect to the homometry relation --- in particular, characterizing those equivalence classes that contain \emph{more} than one bracelet.
Following the phase retrieval literature, we call these \emph{nontrivial homometry classes}.
Note that if $[S]$ and $[T]$ are homometric, then $S$ and $T$ necessarily have the same cardinality; thus we may as well restrict our attention to those binary bracelets with some fixed weight $k$.
We therefore have the following combinatorial restatement of the crystallographic phase retrieval problem:

\begin{prob}
    \label{prob:main}
    Let $k$ be a positive integer.
    For all positive integers $n$, classify and enumerate the nontrivial homometry classes of binary bracelets with length $n$ and weight $k$.
\end{prob}

Problem~\ref{prob:main} has so far been solved only for $k \leq 4$.
In fact, for $k \leq 3$, the problem is trivial, since then every homometry class is trivial (i.e., every binary bracelet is retrievable).
For $k = 4$, a solution to Problem~\ref{prob:main} was (according to ~\cite{Rosenblatt}*{p.~337}) partially discovered by Erd\H{o}s, and was ultimately proved by Rosenblatt--Berman~\cite{Rosenblatt}*{Thm.~3.9}.
This classification of nontrivial homometry classes for $k=4$ consists of two types, which we depict below by giving the locations of the four black beads :

\[
\begin{array}{ccc}
\left\{ \quad 
\begin{tikzpicture}[baseline,every node/.style={scale=.75},scale=.75]
    \node[anchor=base] at (0,0) {};
    
    \def\n{4}  
    \def\angle{360/\n}
    
    \draw[thick, lightgray] (0,0) circle (1);

    \foreach[parse=true] \i in {0,1,...,\n-1} {
    \draw[thick, lightgray, rotate={\i*\angle}] (0,1.2) -- (0,.8);
    }


    \node at (90:1) [dot] {};
    \node at (0:1) [dot] {};
    \node (a) at (125:1) [dot] {};
    \node (b) at (-55:1) [dot] {};

    \draw [lightgray, thick, dashed] (a) -- (b);

    \node[draw,circle, fill=black, inner sep=0pt, minimum width=2pt] at (0,0) {};

    \node at (105:1.25) {$i$};
    
\end{tikzpicture}
\;\;
\begin{tikzpicture}[baseline,every node/.style={scale=.75},scale=.75]
    \node[anchor=base] at (0,0) {};
    
    \def\n{4}  
    \def\angle{360/\n}
    
    \draw[thick, lightgray] (0,0) circle (1);

    \foreach[parse=true] \i in {0,1,...,\n-1} {
    \draw[thick, lightgray, rotate={\i*\angle}] (0,1.2) -- (0,.8);
    }


    \node at (-90:1) [dot] {};
    \node at (0:1) [dot] {};
    \node (a) at (125:1) [dot] {};
    \node (b) at (-55:1) [dot] {};

    \draw [lightgray, thick, dashed] (a) -- (b);

    \node[draw,circle, fill=black, inner sep=0pt, minimum width=2pt] at (0,0) {};

    \node at (105:1.25) {$i$};
    
\end{tikzpicture} \quad
\right\} & \qquad  & \left\{ \quad 
\begin{tikzpicture}[baseline,every node/.style={scale=.75},scale=.75]
    \node[anchor=base] at (0,0) {};
    
    \def\n{13}  
    \def\angle{360/\n}
    
    \draw[thick, lightgray] (0,0) circle (1);

    \foreach[parse=true] \i in {0,1,...,\n-1} {
    \draw[thick, lightgray, rotate={\i*\angle}] (0,1.2) -- (0,.8);
    }


    \node at (90:1) [dot] {};
    \node at (90+\angle:1) [dot] {};
    \node at (90+4*\angle:1) [dot] {};
    \node at (90+6*\angle:1) [dot] {};
    
\end{tikzpicture}
\;\;
\begin{tikzpicture}[baseline,every node/.style={scale=.75},scale=.75]
    \node[anchor=base] at (0,0) {};
    
    \def\n{13}  
    \def\angle{360/\n}
    
    \draw[thick, lightgray] (0,0) circle (1);

    \foreach[parse=true] \i in {0,1,...,\n-1} {
    \draw[thick, lightgray, rotate={\i*\angle}] (0,1.2) -- (0,.8);
    }


    \node at (90:1) [dot] {};
    \node at (90+2*\angle:1) [dot] {};
    \node at (90+3*\angle:1) [dot] {};
    \node at (90+7*\angle:1) [dot] {};
    
\end{tikzpicture} \quad
\right\} \\[6ex]
\text{$n$ divisible by $4$}, & & \text{$n$ divisible by $13$}\\
1 \leq i < n/4 & &
\end{array}
\]

\noindent 
For $k \geq 5$, however, the homometry classes are not nearly as well behaved, leading Rosenblatt to observe that the $k \geq 5$ case seems very difficult to classify~\cite{Rosenblatt}*{p.~329}.

In this paper we solve Problem~\ref{prob:main} for $k=5$.
Our solution consists of a classification theorem (Theorem~\ref{thm:discrete}) and an enumeration theorem (Theorem~\ref{thm:gf}), which we preview below.

\begin{thm*}[Classification theorem; see also Theorem~\ref{thm:discrete}]
    Consider the set of binary bracelets of length $n$ and weight 5.
    Every nontrivial homometry class belongs to exactly one of the seven types $\mathsf{A}$--$\mathsf{G}$ in Figure~\ref{fig:k5 classification}.    
\end{thm*}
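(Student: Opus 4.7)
The plan is to encode 5-bracelets algebraically, verify that each of the proposed families $\mathsf{A}$--$\mathsf{G}$ genuinely consists of nontrivial homometry classes, and then show by a structural argument that no other nontrivial classes exist. The natural algebraic encoding is the indicator polynomial $f_S(x) = \sum_{s \in S} x^s \in \Z[x]/(x^n - 1)$: bracelets $[S]$ and $[T]$ are homometric exactly when $f_S(x)\,f_S(x^{-1}) \equiv f_T(x)\,f_T(x^{-1}) \pmod{x^n - 1}$. Equivalently, at the combinatorial level, a 5-bracelet is recorded by a cyclic composition $(a_1, \ldots, a_5)$ of $n$ (modulo rotation and reversal), and its distance multiset is the multiset of the ten consecutive partial sums $a_i + a_{i+1} + \cdots + a_{i+r}$ for $0 \leq r \leq 3$ (indices read cyclically), each folded into $\{1, \ldots, \lfloor n/2 \rfloor\}$. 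The homometry condition then becomes a concrete multiset equation between compositions.

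With that framework fixed, I would first carry out the easier direction. For each of $\mathsf{A}$ through $\mathsf{G}$, I would write down the compositions as explicit functions of $n$ and the free parameters, compute the ten partial sums for each representative, and verify multiset equality among the members of the class. Disjointness of the families would then follow by exhibiting a distinguishing numerical invariant for each one --- for example the smallest distance, the pattern of distance multiplicities, or the arithmetic condition on $n$ (a divisibility or parity constraint). I would also run a brute-force computer search over small $n$ as a sanity check that no further sporadic classes arise beyond what the seven families predict.

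The main obstacle is completeness: showing that \emph{every} nontrivial pair or triple of homometric 5-bracelets arises from one of the seven families. My approach would be to reconstruct a composition from its distance multiset as far as the data allow. The smallest and largest distances constrain the smallest and largest consecutive partial sums, and tracking which distances \emph{must} be realized as length-one or length-two consecutive sums narrows each composition to a short list of shapes. I expect that after this reduction the residual freedom is governed by a small polynomial identity modulo $x^n - 1$, whose integer solutions can be enumerated and matched against $\mathsf{A}$--$\mathsf{G}$. The most delicate case is the homometric triple $\mathsf{G}$, which likely corresponds to an exceptional factorization in $\Z[x]/(x^n-1)$ occurring only for special $n$ (a phenomenon paralleling the role of $n=13$ in the $k=4$ classification); I anticipate that ruling out further sporadic triples will demand its own dedicated argument, possibly via a separate analysis of compositions admitting more than one nontrivial homometric partner.
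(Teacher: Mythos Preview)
Your proposal contains a factual slip and, more seriously, a genuine gap in the completeness direction. First the slip: the homometric \emph{triples} are Type~$\mathsf{E}$, not Type~$\mathsf{G}$; Type~$\mathsf{G}$ consists of four sporadic \emph{pairs} occurring only when $20 \mid n$. This matters because your closing remarks about ``exceptional factorizations'' and ``further sporadic triples'' are aimed at the wrong target.

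The real issue is that your plan for exhaustiveness is not yet a strategy. You propose to ``reconstruct a composition from its distance multiset as far as the data allow,'' using smallest and largest distances to constrain partial sums until ``the residual freedom is governed by a small polynomial identity.'' But you give no mechanism for controlling the case explosion, and the problem is that there isn't an obvious one: a priori, two homometric 5-bracelets are related by some bijection between their $\binom{5}{2}=10$ distances, and the space of such bijections interacts badly with the cyclic-sum structure of compositions. The paper handles this by passing to the circle group $\mathbb{T}$, introducing a geometric invariant (the \emph{long count}, determined by where the center lies relative to the inscribed pentagon), proving by a delicate case analysis that homometric pairs can only have long-count profiles $(p,q)\in\{(0,1),(0,2),(1,1),(1,2),(2,2)\}$, and then --- crucially --- using a computer to reduce the $5\times 10!\approx 1.8\times 10^7$ possible distance-matching permutations down to $22$ whose associated linear systems have nonempty solution sets. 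Only after that reduction does the hand reparametrization into Types~$\mathsf{A}$--$\mathsf{G}$ become feasible. Your sketch has no analogue of the long-count filtration and no substitute for the algorithmic reduction step; the indicator-polynomial formulation $f_S(x)f_S(x^{-1})\equiv f_T(x)f_T(x^{-1})$ is correct but does not by itself cut down the search, and the paper's authors explicitly note that such algebraic characterizations, while elegant, have not historically yielded concrete classifications for $k\geq 5$.
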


\begin{figure}
    \centering
    \input{NewPictures/k5_classification}
    \caption{Main result of the paper: a classification of nontrivial homometry classes of binary bracelets of length $n$ and weight $5$.
    We draw hash marks dividing each circle into equal arcs.
    A dot drawn on a hash mark represents a black bead whose position is fixed; all other dots represent black beads whose position varies with the parameters $i$ or $j$.
    Note that one must exclude $i$- and $j$-values that cause two black beads to coincide.
    (See Theorem~\ref{thm:discrete} for the bracelets written out in explicit coordinates, along with the complete description of valid $i$- and $j$-values.)
    If a pair of beads is attached to the centerpoint by dashed lines, then the two beads maintain a constant distance as $i$ varies.
    In Types~$\mathsf{A}$ and~$\mathsf{E}$ this distance is $n/2$; in Type~$\mathsf{B}$ it is $2n/5$ (left bracelet) and $n/5$ (right bracelet); in Type~$\mathsf{C}$ it is $n/6$; in Type~$\mathsf{D}$ it is $n/3$; and in Type~$\mathsf{F}$ it is $n/4$.
    The vertical dashed line in Type~$\mathsf{A}$ (resp.,~$\mathsf{D}$) remains vertical as $j$ (resp., $i$) varies.}
    \label{fig:k5 classification}
\end{figure}

\noindent For example, the homometric pair in~\eqref{example intro} is of Type~$\mathsf{A}$, where $n=12$ with $(i,j) = (1,2)$.
As expected, our classification confirms that the homometry classes for $k=5$ are much more complicated than for $k=4$.
In particular, there is a \emph{two}-parameter family of homometric pairs (Type~$\mathsf{A}$), as well as a family of homometric \emph{triples} (Type~$\mathsf{E}$).
Despite the complicated structure of these nontrivial homometry classes, our theorem above leads us to the following closed-form generating function to count these classes for arbitrary $n$.

\begin{thm*}[Enumeration theorem; see also Theorem~\ref{thm:gf}]
    Let $h_n$ denote the number of nontrivial homometry classes of binary bracelets with length $n$ and weight 5.
    We have 
    \begin{align*}
        \sum_{n=1}^\infty h_n \, x^n &= \frac{2x^{10}}{(1-x^2)(1-x^4)^2} + \frac{x^{10}+4x^{15}}{(1-x^5)(1-x^{10})} + \frac{x^{12} + x^{18}}{(1-x^6)(1-x^{12})} + \frac{4x^{16}}{(1-x^8)^2} + \frac{4x^{20}}{1-x^{20}} \\[1ex]
        &= 3x^{10} + 3x^{12} + 6x^{14} + 5x^{15} + 10x^{16} + 14x^{18} + 22x^{20} + 20x^{22} + 31x^{24} + 10x^{25} + 30x^{26} + \cdots.
    \end{align*}
\end{thm*}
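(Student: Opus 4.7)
My plan is to apply Theorem~\ref{thm:discrete} type by type: separately enumerate the nontrivial homometry classes in each of the seven families $\mathsf{A}$--$\mathsf{G}$, express each count as a rational generating function in $x$, and then sum. For each type $\mathsf{T}$ the classification prescribes a divisibility condition on $n$ together with explicit parameter ranges (one or two integer parameters), so the problem reduces to lattice-point bookkeeping modulo the residual symmetries of the parameterization.

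Concretely, for each type $\mathsf{T}$ I would carry out three substeps. First, determine the set of valid parameter tuples as a function of $n$: for the single-parameter families $\mathsf{B}$--$\mathsf{F}$ this is a count of integers $i$ in an interval whose endpoints grow linearly with $n$, and for the two-parameter family $\mathsf{A}$ it is a count of pairs $(i,j)$ in a triangular region. Second, quotient by any extra equivalences inherited from the dihedral action that are not already built into the parameterization (for example, the $i \leftrightarrow j$ symmetry in type $\mathsf{A}$, and the reflection $i \mapsto n-i$ when relevant). Third, remove the degenerate values at which two black beads of a bracelet coincide, together with the exceptional exclusions flagged in the classification --- the $j=n/6$ condition in $\mathsf{A}$, the $i \in \{n/10, 3n/10\}$ exclusions in $\mathsf{B}$, and analogous boundary corrections elsewhere. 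Each substep produces either a polynomial in $n/d$ (where $d$ is the divisibility modulus of the type) or a bounded finite correction, and hence contributes a rational function of the form $P(x)/\prod_k(1-x^{d_k})$. The matches I expect, reading off the shape of the claimed generating function, are that type $\mathsf{A}$ contributes the term with denominator $(1-x^2)(1-x^4)^2$; types $\mathsf{B}$ and the single-pair part of $\mathsf{G}$'s cousin contribute the $(1-x^5)(1-x^{10})$ term; types $\mathsf{C}$, $\mathsf{D}$, $\mathsf{E}$ give the $(1-x^6)(1-x^{12})$ term; type $\mathsf{F}$ gives $\frac{4x^{16}}{(1-x^8)^2}$; and the four inequivalent triples of type $\mathsf{G}$ give $\frac{4x^{20}}{1-x^{20}}$, since each triple exists and is distinct at every multiple of $20$.

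The main obstacle will be type $\mathsf{A}$: it is the only family with two free parameters, the shape of the valid region $1 \leq i \leq j < n/4$ has a floor-function dependence on $n \bmod 4$, and the exclusion line $j=n/6$ (which forces $i=n/12$ and so is only meaningful when $12 \mid n$) introduces a correction whose generating function must be computed separately and combined. Converting a triangular lattice-point count under these mixed congruence constraints into a single clean rational function --- and verifying that the output is precisely $\frac{2x^{10}}{(1-x^2)(1-x^4)^2}$ --- is the step that will require the most care. By contrast, each single-parameter type reduces to an elementary geometric series after subtracting a short list of boundary values. Summing the seven contributions, simplifying, and cross-checking the initial coefficients $3, 3, 6, 5, 10, 14, 22, \ldots$ against a direct computer enumeration of homometric pentachord bracelets completes the proof.
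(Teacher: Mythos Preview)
Your approach is exactly the paper's: compute $H(\bullet;x)$ for each type $\bullet\in\{\mathsf{A},\ldots,\mathsf{G}\}$ from the index sets in Theorem~\ref{thm:discrete} and sum. A few of your detail-level guesses are off and would trip you up in the execution, so let me flag them.

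First, the index set $\I_m(\mathsf{A})$ in Theorem~\ref{thm:discrete} consists of \emph{ordered} pairs $(i,j)$ with $1\le i,j<m/2$ and $i\neq j$ (plus the $j=m/3$ exclusion); there is no residual $i\leftrightarrow j$ symmetry to quotient by, and the region is a punctured square, not a triangle. Second, the type-by-type contributions do not line up with the five displayed summands as cleanly as you predict: the $j=m/3$ exclusion in type~$\mathsf{A}$ produces a correction term $-\dfrac{3x^{18}}{(1-x^6)(1-x^{12})}$ that exactly cancels $H(\mathsf{D};x)$, while $H(\mathsf{C};x)+H(\mathsf{E};x)$ gives the $\dfrac{x^{12}+x^{18}}{(1-x^6)(1-x^{12})}$ term. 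Third, type~$\mathsf{G}$ consists of four homometric \emph{pairs} (not triples) occurring at every multiple of $20$; the triples are type~$\mathsf{E}$. With these corrections the bookkeeping goes through as you outline.
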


\noindent By means of this generating function, one can compute any $h_n$ by taking the coefficient of $x^n$ in the Maclaurin expansion.
For example, it takes an ordinary PC just a few seconds to compute $h_{15000} = 14068747$.
In fact, by making a trivial adjustment~\eqref{gf refined} to the generating function, one computes that of these $14068747$ nontrivial homometry classes, $1249$ are triples and the rest are pairs.

The special case $k=5$ is notable for the following reason.
In general, one can replace $\Z_n$ by $\Z$ in the above discussion, replacing circular distance by the standard absolute value metric.
There certainly do exist mutually homometric subsets of $\Z$ (modulo translation and reflection);
moreover, two homometric subsets $S,T \subseteq \Z$ descend to homometric length-$n$ bracelets for any $n$, via the obvious map $\Z \longrightarrow \Z_n$ given by $x \mapsto x \: {\rm mod} \: n$.
The opposite is \emph{not} true, however; indeed, $\Z$ (unlike $\Z_n$) has no mutually homometric subsets with at most 5 elements (modulo translation and reflection); see~\cite{LemkeWerman}*{p.~9}.
Thus, $5$ is the largest value of $k$ for which the crystallographic binary phase retrieval problem depends entirely on the periodicity of $\Z_n$.
In other words, for $k \geq 6$, some of the nontrivial homometry classes in $\Z_n$ are merely ``shadows'' of those in $\Z$.

\subsection*{Overview of the paper}

In order to obtain our two main theorems (previewed above) in Section~\ref{sec:main results}, we transfer the problem from the discrete setting (for the cyclic group $\Z_n$) to a more straightforward continuous setting (for the circle group $\mathbb{T}$).
Thus, Sections~\ref{sec:5-point bracelets} and~\ref{sec:classification cts} focus on the analogous problem of homometric 5-element subsets of $\mathbb{T}$.
Throughout those sections, we study \emph{5-point bracelets}, that is, orbits of 5-point subsets of $\mathbb{T}$ under rotations and reflections.
In order to control the enormous number of possible configurations of two 5-point bracelets, we first divide these configurations into five geometric categories labeled by ordered pairs $(p,q)$.   
Then we introduce a bookkeeping device we call a \emph{$(p,q)$-difference table} (Definition~\ref{def:difference table}), along with an algorithm to obtain the minimal set of 22 tables that exhaust all possible homometric configurations.
For each of these difference tables there is a corresponding \emph{solution set} (Definition~\ref{def:X and XY}) consisting of ordered pairs of vectors that encode pairs of homometric 5-point bracelets.
By (very carefully) reparametrizing these 22 solution sets, we are able to stitch them together into a seven-type classification (see Proposition~\ref{prop:classification cts}).
Then in Section 4, we translate this classification back into the discrete setting for $\Z_n$, finally obtaining Theorems~\ref{thm:discrete} and~\ref{thm:gf}.
We conclude in Section~\ref{sec:open} with some further questions and open problems.

\subsection*{The collision-free version and Piccard's ``theorem''}

 A closely related question in the literature was the existence of homometric sets which are \emph{collision-free}, meaning that their common multiset of distances contains no repeated distances.
 Originally, an apparent result of Sophie Piccard~\cite{Piccard39} asserted that distinct homometric subsets of the real line (up to translation and reflection) cannot be collision-free;
 however, this ``theorem'' turned out to be false, as it admits precisely two families of counterexamples~\cites{Bloom77,BekirGolomb07,YovanofGolomb98}.
 (In both families, the collision-free homometric sets have cardinality $k=6$.)
 In light of this historical interest, a referee suggested that we use our computer code to observe the possible numbers of \emph{collisions}, that is, the number of distances that repeat, in each of the seven families in our classification theorem above:

 \begin{center}
 \begin{tabular}{|c|c|}
     \hline 
     Type & Number of collisions \\ \hline
     $\mathsf{A}$ & 0, 1, 2, 3, or 4 \\ \hline
     $\mathsf{B}$ & 3 or 5 \\ \hline
     $\mathsf{C}$ & 1 or 4 \\ \hline
     $\mathsf{D}$ & 0 or 2 \\ \hline
     $\mathsf{E}$ & 1 \\ \hline
     $\mathsf{F}$ & 0 or 2 \\ \hline
     $\mathsf{G}$ & 1 or 2 \\ \hline
 \end{tabular}
 \end{center}

 \noindent In particular, for the modular version of the $k=5$ problem addressed in this paper, there are infinite families of collision-free homometric pairs occurring in Types $\mathsf{A}$, $\mathsf{D}$, and $\mathsf{F}$.

\subsection*{Crystallography background}

Since the origins of the phase retrieval problem in X-ray crystallography in the early 1900s, its importance has increased due to many additional modern applications; the related literature is therefore vast, and the references we mention here form the merest skeleton of a survey.
We first recommend the excellent exposition by Bendory--Edidin~\cite{BENotices}, in particular pages 1489--1490 explaining the periodic binary setting that is the subject of the present paper.
The references in that expository article include many very recent papers, reflecting the ongoing interest in crystallographic phase retrieval; see~\cites{BE20,BEG,BEM25,Nadimi}, for example.

As mentioned above, the classification problem we address was proposed in 1984 by Rosenblatt~\cite{Rosenblatt}, to which we refer the reader especially for the exposition in its first three sections.
In that paper and elsewhere~\cites{RosenblattSeymour} are some beautiful results giving algebraic characterizations of non-retrievable signals; at the same time, however, ``algebraic techniques are typically not very
 useful in designing and analyzing practical and efficient algorithms''~\cite{BENotices}*{p.~1494} for phase retrieval.
 (See, however, the elegant proofs in~\cite{Jed24}, where homometric triples are classified using Rosenblatt's notion of \emph{spectral units}.)
 We are thus hopeful that the concrete classification we give in this paper might ultimately be aligned with, and perhaps even shed some light upon, the elegant algebraic theory.

 For readers interested in the pioneering work in crystallographic phase retrieval, we point out Pauling--Shappell~\cite{PaulingShappell} in 1930, as well as the seminal 1944 paper by Patterson~\cite{Patterson}, which (at least implicitly) contained the two-type classification for $k=4$ later proved by Rosenblatt.
 In the 1960s,  Bullough~\cite{Bullough} investigated various general properties of homometric subsets of $\Z_n$.
 Papers such as \cites{LemkeWerman,LSS} and references therein treat the equivalent problem (called the ``beltway'' or ``turnpike'' problem, depending on whether the setting is periodic or aperiodic, respectively) of reconstructing discrete sets from the multiset of their interpoint distances.
 Erd\H{o}s took an avid interest in this and several related problems~\cites{Erdos46}.

 \subsection*{The Z-relation in microtonal music theory}

 The phase retrieval problem in the setting of this paper is equivalent to a well-known problem in microtonal music theory.
 In the language of music theory, one views $\Z_n$ as a scale dividing the octave into $n$ equally-tempered pitches; the special case $n=12$, shown above in~\eqref{example intro}, corresponds to the standard 12-pitch scale typical in Western music.
 A subset $S \subseteq \Z_n$ is a \emph{chord} (or ``pitch class set'');
 in our special case, a subset $S$ with cardinality $k=5$ is typically called a \emph{pentachord}.
 The cyclic distance between two pitches is called the \emph{interval} between them, and the multiset of cyclic distances is called the \emph{interval content} of a chord.
 Given two chords that are not obtained from one another via \emph{transposition} and/or \emph{inversion} --- that is, chords corresponding to distinct bracelets --- the two chords are said to be \emph{Z-related} if their multisets of pairwise intervals coincide --- that is, if they are homometric.
 Therefore, the main result of this paper can be restated as follows, using the language of music theory: we classify and enumerate the Z-related pentachords in an equal-temperament scale consisting of an arbitrary number of pitches.

 To the reader interested in the musical motivation, we recommend the work of Forte~\cite{Forte} (who coined the term ``Z-related''), Soderberg~\cite{Soderberg}, and more recent work such as~\cites{MGAA,MGAAA}.
 In particular, we strongly recommend the monumental dissertation~\cite{Goyette} (and the references therein), which gives a comprehensive historical and musical overview of the Z-relation along with an original mathematical treatment.
 The notion of homometry has also been adapted to rhythms rather than chords~\cite{OTT}.
 Pertaining to Problem~\ref{prob:main} in the present paper, we point out the note~\cite{JJ}, which confirmed that the classification problem was very much open for $k \geq 5$ and included some enumerative computer data in various special cases.
 Closely related to our paper are the notes by Althuis--G\"obel~\cite{AG}, which contain a few special families of Z-related pentachords.
 In particular, their Properties 3 and 4 are very special cases of our Type~$\mathsf{A}$, while their Property 6 is a special case of our Type~$\mathsf{B}$.
 After completing the present paper, we became aware of the very recent result of Jedrzejewski~\cite{Jed24}*{Thm.~6}, classifying all triples of Z-related pentachords; this coincides with our Type~$\mathsf{E}$.

 \subsection*{Acknowledgments}

 We thank Joseph Rosenblatt for his correspondence with us regarding the motivation behind this problem.
 We are also grateful to the anonymous referees for their careful reading and their detailed improvements to this paper.

\section{Homometry and difference tables for the circle group}
\label{sec:5-point bracelets}

In this and the next section, in order to temporarily bypass the bracelet length $n$, we conduct our analysis not in terms of the cyclic group $\Z_n$, but rather the circle group $\mathbb{T} \cong \R / \Z$.

\subsection*{5-point bracelets and their canonical vectors}

The notion of homometry, described in the introduction, applies equally well to finite subsets of the circle group $\mathbb{T}$, up to equivalence under rotations and reflections.
To refer concisely to such an object --- that is, the continuous analogue of a binary bracelet with weight~$k$ --- we introduce the term \emph{$k$-point bracelet} as follows.

\begin{dfn}[$k$-point bracelet]
    \label{def:k bracelet}
    Let $\mathbb{T} \coloneqq [0,1)$ be the circle group, where the operation is addition modulo~$\mathbb{Z}$.
    By depicting $\mathbb{T}$ as a circle (identifying 0 and 1), we obtain a natural action on $\mathbb{T}$ by the orthogonal group $\operatorname{O}(2)$ consisting of all rotations and reflections of the circle. 
    By a \emph{$k$-point bracelet}, we mean the $\operatorname{O}(2)$-orbit $[S]$ of some $k$-element subset $S \subset \mathbb{T}$.
\end{dfn}

Let $S$ and $T$ be finite subsets of $\mathbb{T}$.
It follows from Definition~\ref{def:k bracelet} that $[S] = [T]$ if and only if there is some $a \in \mathbb{T}$ such that
\begin{equation}
    \label{same reps}
    T = \{a + s : s \in S \} \text{ or } T = \{a - s : s \in S\}.
\end{equation}
In both cases in~\eqref{same reps}, $T$ is obtained from $S$ via rotating by $a$, and in the second case this rotation is composed with the reflection $x \mapsto -x = 1-x$.

The cyclic (i.e., Lee) distance on $\mathbb{T}$ is given by 
\begin{equation}
    \label{distance T}
    d(t,u) = d_{\mathbb{T}}(t,u) \coloneqq \min\{|t-u|, \: 1 - |t-u| \} \in [0, \: 1/2].
\end{equation}
For this reason, we say that an element $t \in \mathbb{T}$ is \emph{short} if $t \leq 1/2$, and \emph{long} if $t > 1/2$.
The distinction is key to our analysis because for all $t,u \in \mathbb{T}$ with $t \geq u$, we have
\begin{equation}
\label{short long distance}
    d(t,u) = \begin{cases}
        t-u, & \text{$t-u$ is short},\\
        1-(t-u), & \text{$t-u$ is long}.
    \end{cases}
\end{equation}
Two $k$-point bracelets are said to be \emph{homometric} if they have the same multiset of pairwise distances between their elements (upon choosing any representative for each bracelet).
Homometry is clearly an equivalence relation on the set of all $k$-point bracelets.

\begin{dfn}[Homometry class]
    \label{def: homometry class cts}
    A \emph{homometry class} is an equivalence class of $k$-point bracelets with respect to the homometry relation.
    A homometry class is \emph{nontrivial} if it has cardinality greater than~1.
\end{dfn}

From now on, we specialize to $k=5$.
We begin our analysis by classifying 5-point bracelets into four types as follows.
If one draws any representative of a 5-point bracelet on a circle, and then draws all 10 chords connecting its 5 points, then this creates a circumscribed pentagon whose interior is divided into 11 regions: five \emph{outer triangles} (sharing a side with the pentagon), five \emph{inner triangles} (sharing a vertex but not a side with the pentagon), and one small \emph{central pentagon} (disjoint with the large pentagon).
In order to make these 11 regions mutually disjoint, we set the following boundary convention: an outer triangle does \emph{not} contain its boundary along its outer side; an inner triangle does \emph{not} contain its boundary along its two outer sides; and the central pentagon does \emph{not} contain its boundary at all.
(See the three shaded regions, respectively, in Figure~\ref{fig:long counts}.)
With these conventions, every point in the interior of the large pentagon lies in exactly one of these 11 regions.

\begin{dfn}[Long count]
    \label{def:long count}
    Consider a 5-point bracelet, depicted with the ten pairwise chords between its points as described above.
    The \emph{long count} of the 5-point bracelet is defined to be
    \[
    \begin{cases}
        0, & \text{center of the circle lies outside the interior of the large pentagon},\\
        1, & \text{center of the circle lies in an outer triangle},\\
        2, & \text{center of the circle lies in an inner triangle},\\
        3, & \text{center of the circle lies in the central pentagon}.
    \end{cases}
    \]
\end{dfn}

\begin{figure}[t]
    \centering
    \begin{tikzpicture}[scale=1.5, baseline]
    \node[anchor=base] at (0,0) {};
    
    \draw[thick] (0,0) circle (1);

    \node (a) at (90:1) [dot] {};
    \node (b) at (50:1) [dot] {};
    \node (c) at (5:1) [dot] {};
    \node (d) at (-15:1) [dot] {};
    \node (e) at (-70:1) [dot] {};

    \draw[thick, lightgray] (a) -- (b) -- (c) -- (d) -- (e) --(a) (a) -- (c) (a) -- (d) (b) -- (d) (b) -- (e) (c) -- (e);

    \node[draw,circle, fill=black, inner sep=0pt, minimum width=2pt] at (0,0) {};

    \node at (0,-1.4) {Long count 0};
    
\end{tikzpicture}
\hfill
\begin{tikzpicture}[scale=1.5, baseline]
    \node[anchor=base] at (0,0) {};
    
    \draw[thick] (0,0) circle (1);

    \node (a) at (90:1) [dot] {};
    \node (b) at (50:1) [dot] {};
    \node (c) at (5:1) [dot] {};
    \node (d) at (-40:1) [dot] {};
    \node (e) at (-110:1) [dot] {};

    \draw[thick, lightgray] (a) -- (b) -- (c) -- (d) -- (e) --(a) (a) -- (c) (a) -- (d) (b) -- (d) (b) -- (e) (c) -- (e);

    \fill[lightgray] (a.south) -- (intersection of a--d and b--e) -- (e) -- cycle;
    
    \draw [ultra thick] (a) -- (intersection of a--d and b--e) -- (e);
    \draw [ultra thick, dashed] (a) -- (e);

    \node[draw,circle, fill=black, inner sep=0pt, minimum width=2pt] at (0,0) {};

    \node at (0,-1.4) {Long count 1};
    
\end{tikzpicture}
\hfill
\begin{tikzpicture}[scale=1.5, baseline]
    \node[anchor=base] at (0,0) {};
    
    \draw[thick] (0,0) circle (1);

    \node (a) at (90:1) [dot] {};
    \node (b) at (0:1) [dot] {};
    \node (c) at (-50:1) [dot] {};
    \node (d) at (-110:1) [dot] {};
    \node (e) at (-150:1) [dot] {};

    \draw[thick, lightgray] (a) -- (b) -- (c) -- (d) -- (e) --(a) (a) -- (c) (a) -- (d) (b) -- (d) (b) -- (e) (c) -- (e);

    \fill[lightgray] (a.south) -- (intersection of a--c and b--e) -- (intersection of a--d and b--e) -- cycle;
    
    \draw [ultra thick, shorten <=2pt,shorten >=2pt] (intersection of a--c and b--e) -- (intersection of a--d and b--e);
    \draw [ultra thick, dashed] (a) -- (intersection of a--c and b--e) (a) -- (intersection of a--d and b--e);
    
    \node[draw,circle, fill=black, inner sep=0pt, minimum width=2pt] at (0,0) {};

    \node at (0,-1.4) {Long count 2};
    
\end{tikzpicture}
\hfill
\begin{tikzpicture}[scale=1.5, baseline]
    \node[anchor=base] at (0,0) {};
    
    \draw[thick] (0,0) circle (1);

    \node (a) at (90:1) [dot] {};
    \node (b) at (35:1) [dot] {};
    \node (c) at (-40:1) [dot] {};
    \node (d) at (-110:1) [dot] {};
    \node (e) at (-180:1) [dot] {};

    \draw[thick, lightgray] (a) -- (b) -- (c) -- (d) -- (e) --(a) (a) -- (c) (a) -- (d) (b) -- (d) (b) -- (e) (c) -- (e);

    \fill[lightgray] (intersection of a--c and b--e) -- (intersection of a--c and b--d) -- (intersection of c--e and b--d) -- (intersection of c--e and a--d) -- (intersection of a--d and b--e) -- cycle;
    
    \draw [ultra thick, dashed] (intersection of a--c and b--e) -- (intersection of a--c and b--d) -- (intersection of c--e and b--d) -- (intersection of c--e and a--d) -- (intersection of a--d and b--e) -- cycle;

    \node[draw,circle, fill=black, inner sep=0pt, minimum width=2pt] at (0,0) {};

    \node at (0,-1.4) {Long count 3};
    
\end{tikzpicture}
    \caption{Examples accompanying Definition~\ref{def:long count}, and the proof of Lemma~\ref{lemma: valid pq}.
    For long count 0, 1, or 2, this is the unique representative constructed in the proof of Lemma~\ref{lemma:unique rep}.}
    \label{fig:long counts}
\end{figure}

See Figure~\ref{fig:long counts} for examples.
(Alternatively, one could define the long count to be the minimal number of chords intersecting a line drawn from the center of the circle to the exterior of the pentagon.)
For the motivation behind the term ``long count'', see~\eqref{long in X} below.

\begin{lemma}
    \label{lemma: valid pq}
    If two distinct homometric 5-point bracelets have long counts $p$ and $q$ (assuming $p \leq q$), then $(p,q) \in \{(0,1), (0,2), (1,1), (1,2), (2,2)\}$.
\end{lemma}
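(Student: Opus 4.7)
The plan is to rule out each of the five forbidden pairs $(0,0), (0,3), (1,3), (2,3), (3,3)$ via a sum-of-distances invariant together with two rigidity results.

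Let the cyclic gaps of a representative of a $5$-point bracelet be $g_1, \ldots, g_5$ (so $\sum g_i = 1$), and set $h_i \coloneqq g_i + g_{i+1}$ (indices mod $5$). Applying the principle that the center of the circle lies on the longer-arc side of each chord, I would first check the characterization of the long count in terms of the $g_i$ and $h_i$: $p=3$ iff no $h_i$ exceeds $1/2$; $p=2$ iff exactly one $h_i > 1/2$; $p=1$ iff exactly two consecutive $h_i$ exceed $1/2$ while every $g_j < 1/2$; and $p=0$ iff some $g_k > 1/2$ (which automatically forces the two $h_i$'s containing $g_k$ to be long). An elementary consequence is that at most two $h_i$'s can exceed $1/2$, and they must be consecutive. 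Next I would compute the homometric invariant $\sigma \coloneqq \sum_{i<j} d(p_i, p_j)$ by splitting the ten pairs into the five adjacent (1-gap) and five once-removed (2-gap) pairs, where each contributes $\min(g_i, 1-g_i)$ or $\min(h_i, 1-h_i)$ respectively. A direct calculation gives
\begin{equation*}
\sigma = 3 - \sum_{h_i > 1/2}(2h_i - 1) - \sum_{g_k > 1/2}(2g_k - 1).
\end{equation*}
Because each correction is strictly positive whenever nonzero, $\sigma = 3$ iff $p=3$, and $\sigma < 3$ iff $p \in \{0,1,2\}$. Since homometric bracelets share $\sigma$, this immediately rules out $(0,3), (1,3), (2,3)$.

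The pair $(0,0)$ follows from line rigidity: a bracelet with long count $0$ has a gap $g_k > 1/2$, and rotating $g_k$ into the wraparound places all five points in an interval of length $1 - g_k < 1/2$, so every cyclic distance coincides with the corresponding Euclidean distance on the real line. This identifies long-count-$0$ bracelets with $5$-element subsets of $\mathbb{R}$ modulo translation and reflection, and the Lemke--Werman rigidity result recalled in the introduction (no nontrivial homometric subsets of $\mathbb{Z}$ with at most $5$ elements) forces two homometric long-count-$0$ bracelets to coincide.

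The main obstacle is the remaining pair $(3,3)$, which requires a genuine rigidity statement for long-count-$3$ bracelets. The distance multiset of such a bracelet decomposes as $\{g_i\} \sqcup \{h_i\}$, with the $g$-submultiset summing to $1$ and the $h$-submultiset to $2$; one must first argue (by case analysis when a gap value coincides with a pair-sum value) that this decomposition is forced by the distance multiset itself. The heart of the argument is then a combinatorial rigidity claim: the cyclic $5$-tuple $(g_1, \ldots, g_5)$ is determined up to rotation and reflection by the pair of multisets $\bigl(\{g_i\}, \{g_i + g_{i+1}\}\bigr)$. Generically (distinct gap values and distinct pair sums) each $h_i$ identifies a unique unordered pair of adjacent gaps; the five resulting pairs form a $2$-regular graph on the five gap-vertices, which is necessarily a single Hamilton $5$-cycle, and so uniquely determines the bracelet up to dihedral symmetry. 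The delicate work is a case-by-case analysis of degenerate configurations where pair sums coincide and adjacencies are not uniquely recoverable; there one must use the bracelet symmetries directly to show that any ambiguity in the Hamilton-cycle reconstruction is absorbed by passing to a different representative of the same bracelet.
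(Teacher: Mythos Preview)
Your treatment of $(0,0)$ and of $(p,3)$ for $p\le 2$ is essentially the paper's argument: the paper also invokes line rigidity for the first, and for the second computes the same sum-of-distances invariant (phrased as ``all ten distances in $[Y]$ sum to $3$'' versus strictly less than $3$ when the center lies outside the central pentagon). Your formula for $\sigma$ makes this explicit and is correct; just note the boundary case where some $g_k$ or $h_i$ equals $1/2$ exactly, in which case the correction term vanishes but the bracelet still has $p<3$. This is harmless, since a long-count-$3$ bracelet has every distance \emph{strictly} below $1/2$, so a homometric partner cannot have any distance equal to $1/2$ either.

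For $(3,3)$ your two-step plan is a valid reorganization of the paper's proof, not a shortcut around it. The paper splits into Cases $0$, $1$, $2$ according to how many inner distances of $[X]$ appear among the outer distances of $[Y]$; Cases $1$ and $2$ (which use the inequalities you would call $h_i<1/2$ and $g_i+g_{i+1}>g_{i+3}$) reduce to Case $0$, and Case $0$ is exactly your cyclic-rigidity step. So your ``partition is forced'' step is the content of the paper's Cases $1$--$2$, and your ``Hamilton $5$-cycle'' step is its Case $0$. Two cautions. First, your generic hypothesis must be that all $\binom{5}{2}$ pairwise sums of the $g_i$ are distinct, not merely that the five $h_i$ are distinct; otherwise an $h$-value need not pick out a unique edge and the $2$-regular graph argument collapses. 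Second, your partition-uniqueness statement, as phrased, asserts that the abstract $5+5$ split of the distance multiset into parts summing to $1$ and $2$ is unique; this is stronger than required and not obviously true. What is actually needed (and what the paper proves) is only that two long-count-$3$ bracelets with the same distance multiset have the same gap multiset, which is a statement about bracelets rather than arbitrary multisets and allows one to exploit the adjacency constraints throughout. Either way, the degenerate subcases you defer are where all of the work lies; the paper carries them out in full, and your outline would have to do the same.
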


\begin{proof}
    Suppose that $(p,q) = (0,0)$.
    Then in each bracelet, every pairwise difference is short, and therefore by~\eqref{short long distance} every pairwise \emph{distance} is just the pairwise \emph{difference}.
    But it is well known~\cite{LemkeWerman}*{p.~8} that 5 points on the real line are determined (up to translation and reflection) by their multiset of pairwise differences, and therefore the bracelets in question are not distinct, which is a contradiction.
    Thus $(p,q) \neq (0,0)$.

    Next suppose that $p \leq 2$ and $q=3$.
    Consider a generic bracelet $[Y]$ with long count 3, as pictured on the far right-hand side of Figure~\ref{fig:long counts}.
    In particular, note the five \emph{outer distances} (i.e., the arc lengths between two consecutive points) and the remaining five \emph{inner distances}.
    Since each inner distance is at most 1/2, it is the sum of the two consecutive outer distances between its endpoints.
    Therefore $[Y]$ satisfies the following:
    
    \begin{samepage} 
    \begin{align}
    \text{The sum of the outer distances} & = 1, \label{fact:outer sum 1}\\
        \text{the sum of the inner distances} &= 2. \label{fact:inner sum 2}
    \end{align}
    \end{samepage}
    
    \noindent Thus all ten distances in $[Y]$ sum to 3, and we note that in fact each distance is \emph{strictly} less than $1/2$.
    By homometry, the same must be true for the bracelet $[X]$ with long count $p$.
    In particular, the center of the circle for $[X]$ must lie outside the boundary of the central pentagon.
    But then in $[X]$, there is at least one inner distance that is strictly less than the sum of the two consecutive outer distances between its endpoints.
    Therefore, as opposed to $[Y]$, the ten distances in $[X]$ sum to strictly less than 3.
    It follows that $[X]$ and $[Y]$ have distinct distance multisets, and therefore are not homometric, which is a contradiction.
    Thus $(p,q) \notin \{(0,3), (1,3), (2,3)\}$.

    Finally, suppose that $(p,q) = (3,3)$.
    Then there exist distinct homometric bracelets $[X]$ and $[Y]$, both with long count 3.
    For both $[X]$ and $[Y]$, the following geometric facts are immediately apparent from the picture of a bracelet with long count 3 (far right-hand side of Figure~\ref{fig:long counts}), in addition to the facts~\eqref{fact:outer sum 1}--\eqref{fact:inner sum 2} above:
    \begin{align}
        \text{The sum of two consecutive outer distances} &< 1/2, \label{fact:two outers < 1/2} \\
        \text{the sum of two consecutive outer distances} &> \text{the nonadjacent outer distance}, \label{fact:two outers > opposite outer} \\
        \text{the sum of three consecutive outer distances} & > 1/2,    \label{fact:three > 1/2}
    \end{align}
    where on the right-hand side of~\eqref{fact:two outers > opposite outer}, the ``nonadjacent outer distance'' means the unique remaining outer distance which is not adjacent to either of the two consecutive outer distances on the left-hand side of~\eqref{fact:two outers > opposite outer}. 
    In the bracelet $[X]$, denote consecutive outer distances by $a,b,c,d,e$, which determines the inner distances $a+b, \ldots, e+a$.
    We first observe that if the outer distances in $[Y]$ were to include three inner distances of $[X]$, then we would contradict~\eqref{fact:outer sum 1}, because by~\eqref{fact:inner sum 2}--\eqref{fact:two outers < 1/2} any three inner distances in $[X]$ sum to $> 1$.
    Therefore the outer distances of $[Y]$ include at most two inner distances of $[X]$.
    We now treat each case (0, 1, and 2) separately.

    \textbf{Case 0: The outer distances of $[Y]$ include 0 inner distances of $[X]$.}
    In this case, the arrangement of the outer distances in $[Y]$ is obtained by permuting that in $[X]$.
    Modulo rotations and reflections, there are only three nontrivial permutations of the five outer distances $a, \ldots, e$:
    \begin{itemize}
        \item \textbf{Adjacent transposition $(12)$.}
        Suppose the outer distances of $[Y]$ are ordered $bacde$.
        Then the locations in $[Y]$ of the inner distances $a+b$ and $c+d$ and $d+e$ are forced.
        This leaves two ways to assign the distances $b+c$ and $a+e$:
        either $a+c = b+c$ (so that $a=b$) or $a+c = a+e$ (so that $c=e$).
        In either case, we have equality of bracelets $[X] = [Y]$, which is a contradiction.

        \item \textbf{Cycle $(123)$.}
        Suppose the outer distances of $[Y]$ are ordered $cabde$.
        Then the locations in $[Y]$ of the inner distances $a+b$ and $d+e$ are forced.
        There are three ways to assign the distance $b+c$.
        If $a+c=b+c$, then $a=b$, so that the outer distances of $[Y]$ are ordered $cbade = abced$, which is an adjacent transposition (see previous item).
        Likewise, if $b+d=b+c$, then $c=d$, giving the ordering $dabce = abced$, which is again an adjacent transposition.
        Finally, if $c+e = b+c$, then $b=e$, giving the ordering $caedb = acbde$, which is an adjacent transposition.
        Therefore, by the previous item, we have $[X] = [Y]$, a contradiction.

    \item \textbf{Cycle $(1342)$.}
    Suppose the outer distances of $[Y]$ are ordered $bdace$.
    Then one of the inner distances must equal $b+d$.
    If this distance is $a+b$ or $b+c$ or $c+d$ or $d+e$, then the resulting equality of outer distances implies that the outer distance ordering is actually an adjacent 3-cycle (see previous item).
    Thus we must have $a+e = b+d$.
    Proceeding in this way to assign all inner distances, we likewise obtain $a+d=b+c$ and $a+c=d+e$ and $a+b=c+e$, which forces $a=b=c=d$.
    Therefore we have $[X] = [Y]$, a contradiction.        
    \end{itemize}

    \textbf{Case 1: The outer distances of $[Y]$ include 1 inner distance of $[X]$.} 
    In this case, the outer distances of $[Y]$ consist of four outer distances of $[X]$, along with one inner distance of $[X]$.
    But applying~\eqref{fact:outer sum 1} to $[Y]$, we see that this inner distance of $[X]$ must equal the omitted outer distance of $[X]$.
    Thus we are actually in Case 0, so we are done.

    \textbf{Case 2: The outer distances of $[Y]$ include 2 inner distances of $[X]$.}
    Let $\mathbf{i} = \{i_1, i_2\}$ denote the pair of inner distances of $[X]$ that are outer distances of $[Y]$, and let $\mathbf{o} = \{o_1, o_2\}$ denote the pair of outer distances of $[X]$ that are \emph{not} outer distances of $[Y]$.
    By applying~\eqref{fact:outer sum 1} to both $[X]$ and $[Y]$, we have 
    \begin{equation}
        \label{iioo equation}
        i_1 + i_2 = o_1 + o_2.
    \end{equation}
    Note that the chords representing two inner distances either intersect each other in the interior of the circle, or else share an endpoint on the circle itself.
    Thus, up to relabeling by rotations and reflections, there are two possibilities for $\mathbf{i}$, where (without loss of generality) we take $i_1 = a+b$:
    \begin{itemize}
        \item Suppose $\mathbf{i} = \{a+b, \: b+c\}$.
        If $\mathbf{o} = \{a,b\}$ or $\{a,c\}$ or $\{b,c\}$, then $i_1 + i_2 > o_1 + o_2$, violating~\eqref{iioo equation}.
        If $\mathbf{o} = \{b,d\}$, then we have $a+b+c=d$, which violates~\eqref{fact:two outers < 1/2} and~\eqref{fact:three > 1/2}.
        Likewise, if $\mathbf{o} = \{d,e\}$, then we have $a+2b+c = d+e$, which violates~\eqref{fact:two outers < 1/2} and~\eqref{fact:three > 1/2}.
        If $\mathbf{o} = \{c,d\}$, then we have $a+2b = d$, which violates~\eqref{fact:two outers > opposite outer}, since $a$ and $b$ are consecutive outer distances, and $d$ is the outer distance nonadjacent to them.
        This leaves only the case where $\mathbf{o} = \{c,e\}$, meaning that the set of outer distances of $[Y]$ is $\{a,b,d,a+b,b+c\}$; but by~\eqref{fact:two outers < 1/2} and~\eqref{fact:three > 1/2}, $b+c$ cannot be adjacent to $a$ or $a+b$ or $d$, so this case is impossible.        

        \item Suppose $\mathbf{i} = \{a+b, \: c+d\}$.
        Note that  $\mathbf{o}$ must contain $e$;
        otherwise, we would have $i_1+i_2 = a+b+c+d >  o_1 + o_2 $, contradicting~\eqref{iioo equation}.
        If $\mathbf{o} = \{a,e\}$, then we have $b+c+d=e$, which violates~\eqref{fact:two outers < 1/2} and~\eqref{fact:three > 1/2}.
        This leaves only the case $\mathbf{o} = \{b,e\}$, meaning that the set of outer distances of $[Y]$ is $\{a,c,d,a+b,c+d\}$.
        To avoid violating~\eqref{fact:two outers < 1/2}, the order of the outer distances of $[Y]$ (up to cyclic shifts and reflections) is given either by $a,c,c+d,d,a+b$, or (via interchanging $a$ and $d$) by $d,c,a+c,a,b+d$.
        
        First suppose the order of the outer distances of $[Y]$ is $a,c,c+d,d,a+b$.
       We must have $b = a+c$, since putting $b$ as the sum of any other two consecutive outer distances would force $a+b > 1/2$, violating~\eqref{fact:two outers < 1/2}.
        Next, applying~\eqref{fact:outer sum 1} to both $[X]$ and $[Y]$, we obtain the equation $a+b+c+d+e = 2a + b + 2c + 2d$, which yields $e = a+c+d=b+d$; adding $a$ to both sides, we have $a+e=(a+b)+d$.
        One of the three remaining distances (i.e., $e$, $b+c$, and $d+e$) must equal the sum $d+(c+d)$.
        Since $d+e = 1-(a+b+c) = (2a+b+2c+2d)-(a+b+c) = 2d+a+c > 2d+c$, the inner distance in question cannot be $d+e$.
        Likewise, if we were to have $b+c = d+(c+d)$, then we would have $b=2d$; but starting from~\eqref{fact:outer sum 1} applied to $[Y]$, this would give $1 = 2a+b+2c+2d = 2a+2b+2c$, so that $a+b+c = 1/2$, violating~\eqref{fact:three > 1/2}.
        Therefore we must have $e = d+(c+d)$.
        This leaves two ways to assign the remaining inner distances $b+c$ and $d+e$.
        For each assignment, we solve for $(a,b,c,d,e)$ subject to the constraints~\eqref{fact:outer sum 1}--\eqref{fact:three > 1/2}:
            \begin{itemize}
                \item Putting $b+c = c+(c+d)$ and $d+e = a+(a+b)$ yields 
                \[
                \textstyle (a,b,c,d,e) = \left(a, \frac{1}{3}(1-2a), \frac{1}{3}(1-5a), a, \frac{1}{3}(1+a)\right), \quad \text{for } \frac{1}{14} < a < \frac{1}{8}.
                \]
                Comparing this to the outer distances of $[Y]$ given in order 
                \[
                \textstyle a, \: \frac{1}{3}(1-5a), \: \frac{1}{3}(1-2a), \: a, \: \frac{1}{3}(1+a),
                \]
                we would have $[X] = [Y]$, a contradiction.

                \item Putting $b+c = a+(a+b)$ and $d+e = c+(c+d)$ yields 
                \[
                \textstyle (a,b,c,d,e) =  \left(\frac{1}{11}, \frac{3}{11}, \frac{2}{11}, \frac{1}{11}, \frac{4}{11} \right).
                \]
                Comparing this to the outer distances of $[Y]$ given in order
                \[
                \textstyle \frac{1}{11}, \frac{2}{11}, \frac{3}{11}, \frac{1}{11}, \frac{4}{11},
                \]
                we would have $[X] = [Y]$, a contradiction.
            \end{itemize}
        Since each of these two solutions above has $a=d$, there is no need to check the remaining ordering of outer distances of $[Y]$.
        \end{itemize}
    Having shown that all three cases (0, 1, and 2) lead to a contradiction, we conclude that $(p,q) \neq (3,3)$.
\end{proof}

In light of Lemma~\ref{lemma: valid pq}, we may restrict our attention to 5-point bracelets with long count 0, 1, or 2.
We will identify each such bracelet with a unique 4-dimensional vector; by working directly with these \emph{canonical vectors} (Definition~\ref{def:canonical vec}), we will avoid any redundancy involving various representatives of the same bracelet.

\begin{samepage}
\begin{lemma}
\label{lemma:unique rep}
    Let $p \in \{0,1,2\}$.
    A 5-point bracelet has long count $p$ if and only if it has a representative $\{0, x_1, x_2, x_3, x_4\}$ satisfying the following inequalities:
    \begin{align}
        &x_1 < x_2 < x_3 < x_4, \label{proto-positive}\\
        &\begin{cases}
            x_4 \leq 1/2,  & p = 0,\\
            x_4 > 1/2 \textup{ and } x_3 \leq 1/2 \textup{ and } x_4 - x_1 \leq 1/2, & p=1,\\
            x_3 > 1/2 \textup{ and } x_2 < 1/2 \textup{ and } x_4 - x_1 \leq 1/2, & p=2,
        \end{cases} \label{proto-ineqs long count}\\
        & \begin{cases}
            x_1 \leq x_4 - x_3, \textup{ and if } x_1 = x_4 - x_3 \textup{ then } x_2 - x_1 \leq x_3 - x_2, & p = 0 \textup{ or }1,\\
            x_1 \leq 1-x_4, \;\;\textup{ and if } x_1 = 1 - x_4 \;\: \textup{ then } x_2 - x_1 \leq x_4 - x_3, & p=2.
        \end{cases} \label{proto-ineqs canonical ordering}
    \end{align}
    Moreover, this representative is unique.
\end{lemma}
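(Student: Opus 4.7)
The plan is to reparametrize by the five \emph{outer arcs} $a_i := x_i - x_{i-1}$ (with $x_0 := 0$ and $a_5 := 1-x_4$), so that $(a_1,\ldots,a_5)$ is a cyclic sequence of positive reals summing to $1$; a representative of a bracelet satisfying~\eqref{proto-positive} corresponds bijectively to a choice of cyclic shift (starting vertex) and orientation of this gap sequence. Using $x_i = a_1 + \cdots + a_i$, inequality~\eqref{proto-ineqs long count} becomes $a_5 \geq 1/2$ when $p=0$; the conjunction $a_5 < 1/2,\ a_1 + a_5 \geq 1/2,\ a_4 + a_5 \geq 1/2$ when $p=1$; and the conjunction $a_4 + a_5 < 1/2 < a_3 + a_4 + a_5,\ a_1 + a_5 \geq 1/2$ when $p=2$. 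Condition~\eqref{proto-ineqs canonical ordering} becomes $a_1 \leq a_4$ (tiebreaker $a_2 \leq a_3$) when $p = 0,1$, and $a_1 \leq a_5$ (tiebreaker $a_2 \leq a_4$) when $p = 2$.

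The proof then proceeds in three steps. \emph{Uniqueness of the cyclic shift:} for $p=0$, since $\sum a_i = 1$ with all $a_i > 0$, at most one $a_i$ can be $\geq 1/2$. For $p=1$, if two distinct gaps $a_i, a_j$ both had both cyclic neighbors summing with them to $\geq 1/2$, then summing the four resulting inequalities and comparing with $\sum_k a_k = 1$ yields a contradiction after a short case split on whether $a_i, a_j$ are cyclically adjacent or separated. The $p=2$ case is analogous, applied to consecutive pairs of gaps. \emph{Existence of a cyclic shift:} given a bracelet with geometric long count $p$, I would use the elementary fact that the center of the unit disk lies on the side of any chord bounded by its \emph{longer} subtended arc. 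For $p=1$, the outer triangle at pentagon edge $P_4P_0$ is bounded by that edge plus the two short diagonals $P_3P_0$ and $P_4P_1$; the three chord-side conditions translate respectively into $a_5 < 1/2$, $a_4 + a_5 \geq 1/2$, and $a_1 + a_5 \geq 1/2$. For $p=2$, the inner triangle at pentagon vertex $P_0$ is bounded by the diagonals $P_0P_2$, $P_0P_3$, and $P_1P_4$, yielding the three $p=2$ conditions analogously; the $p=0$ case is immediate since ``center outside the pentagon'' is equivalent to some outer arc being $\geq 1/2$.

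\emph{Orientation:} once the cyclic shift is fixed, the only remaining bracelet symmetry is a reflection. For $p=0,1$ this reflection is about the perpendicular bisector of the pentagon edge $P_4P_0$; it preserves $a_5$ and sends $(a_1,a_2,a_3,a_4) \mapsto (a_4,a_3,a_2,a_1)$, so the condition $a_1 \leq a_4$ selects exactly one of the two orientations, with the tiebreaker resolving the degenerate case in which the bracelet is itself reflection-symmetric through that axis (so the two representatives coincide). For $p=2$, the relevant reflection is about the diameter through the distinguished vertex $P_0$: it fixes $P_0$ but sends $(a_1,\ldots,a_5) \mapsto (a_5,a_4,a_3,a_2,a_1)$, which is why the orientation condition reads $a_1 \leq a_5$ rather than $a_1 \leq a_4$. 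I expect the main obstacle to be the existence step, where the chord-by-chord translation must carefully track strict versus non-strict inequalities so that the open/closed boundaries of the depth-$p$ regions in Definition~\ref{def:long count} correctly match the placement of $\geq$ versus $>$ in the lemma's conditions (and in particular, so that the cases $p=0,1,2$ partition the non-$p=3$ bracelets exactly as claimed).
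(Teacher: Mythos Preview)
Your proposal is correct and follows essentially the same geometric idea as the paper's proof, which is much terser: the paper simply observes that the conditions in~\eqref{proto-ineqs long count} say geometrically that the center lies in the outer triangle on edge $P_4P_0$ (for $p=1$) or the inner triangle at vertex $P_0$ (for $p=2$), so placing $0$ is forced up to the one reflection fixing that triangle, and~\eqref{proto-ineqs canonical ordering} breaks the tie.

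Your arc-variable reformulation adds value by making the argument self-contained rather than picture-dependent; in particular your inequality-summing argument for uniqueness of the cyclic shift in the $p=1$ case is an explicit verification that the paper waves through. Your identification of \emph{which} reflection is relevant (edge-bisector for $p=0,1$ versus vertex-diameter for $p=2$) is also exactly right and explains the different shape of~\eqref{proto-ineqs canonical ordering} in the two cases. The one place your outline is thin is the $p=2$ uniqueness of cyclic shift, which you dismiss as ``analogous''; it is not quite symmetric with $p=1$ since the conditions mix a pair sum and a triple sum, and the cleanest way to finish is the geometric one you set up anyway in the existence step---once you have shown that the $p=2$ conditions say precisely ``center lies in the inner triangle at $P_0$'', uniqueness follows from disjointness of the five inner triangles under the boundary convention. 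So the existence step, which you correctly flag as the delicate part (matching strict/non-strict boundaries to the open/closed conventions in Definition~\ref{def:long count}), in fact does most of the work for uniqueness too.
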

\end{samepage}

\begin{proof}
    Consider a 5-point bracelet with long count $p$.
    This bracelet has several representatives containing the element $0$; any such representative can thus be written in the form $\{0, x_1, x_2, x_3, x_4\}$ with the $x_i$'s ordered according to~\eqref{proto-positive}.
    We must show that there is exactly one such representative satisfying~\eqref{proto-ineqs long count}--\eqref{proto-ineqs canonical ordering}.
    The proof follows almost immediately from the geometry of the various long counts $p$, so we will work with the depiction of $\mathbb{T}$ as a circle with $0$ on top and with values increasing clockwise.
    (See Figure~\ref{fig:long counts}.)

    Suppose $p=0$.
    Then by Definition~\ref{def:long count}, the center of the circle lies outside the interior of the pentagon formed by $0$ and the $x_i$'s.
    Therefore there are at most two ways to place the $x_i$'s such that they are all on the right-hand half of the circle (including the bottom point 1/2), thereby satisfying~\eqref{proto-ineqs long count}.
    (See the $p=0$ example in Figure~\ref{fig:long counts}.)
    In particular, these two ways are reflections of each other, via the reflection that interchanges $0$ and $x_4$; the condition~\eqref{proto-ineqs canonical ordering}, specifying the relative edge lengths of the pentagon, is true for exactly one of these reflections.

    Suppose $p=1$.
    Then the center of the circle lies in an outer triangle.
    Therefore there are at most two ways (again via the reflection $0 \leftrightarrow x_4$) to place the $x_i$'s such that this triangle has outer vertices at $0$ and $x_4$.
    (See the $p=1$ example in Figure~\ref{fig:long counts}.)
    In either such placement, the center of the circle must lie strictly to the right of the chord between $0$ and $x_4$, and weakly to the left of the chord between $0$ and $x_3$, and weakly above the chord between $x_1$ and $x_4$; thus~\eqref{proto-ineqs long count} is satisfied.
    The edge length condition~\eqref{proto-ineqs canonical ordering} is true for exactly one of the two reflections.

    Suppose $p=2$.
    Then the center of the circle lies in an inner triangle.
    There are at most two ways (via reflecting the circle horizontally) to  to place the $x_i$'s such that 0 is the top vertex of this inner triangle.
    (See the $p=2$ example in Figure~\ref{fig:long counts}.)
    In either such placement, the center of the circle must lie strictly to the left of the chord between $0$ and $x_3$, strictly to the left of the chord between $0$ and $x_2$, and weakly above the chord between $x_1$ and $x_4$; thus~\eqref{proto-ineqs long count} is satisfied.
    The edge condition~\eqref{proto-ineqs canonical ordering} is true for exactly one of the two horizontal reflections.

    The converse follows immediately from the geometric argument above.
\end{proof}

\begin{dfn}[Canonical vector of a bracelet]
\label{def:canonical vec}
    Given a 5-point bracelet with long count $p \in \{0,1,2\}$, its \emph{canonical vector} is the vector $\mathbf{x} = (x_1, x_2, x_3, x_4)$ whose coordinates are given by the unique representative specified in Lemma~\ref{lemma:unique rep}.
\end{dfn}

\subsection*{Difference tables}

Because our immediate goal is to characterize homometric pairs $\{[X], [Y]\}$, we will treat $[X]$ as an unknown, meaning that the coordinates $x_i$ of its canonical vector $\mathbf{x} = (x_1, x_2, x_3, x_4)$ should be treated as variables.
We set the following shorthand for the pairwise differences between the $x_i$'s (where $x_0 \coloneqq 0$):
\begin{align}
\boxed{\mathsf{ij}} &\coloneqq x_j - x_i, \nonumber \\
\boxed{\mathsf{\overline{ij}}} &\coloneqq 1 - \boxed{\mathsf{ij}}, \text{ so that } \boxed{\mathsf{\overline{\overline{ij}}}} = \boxed{\mathsf{ij}}, \nonumber \\
D & \coloneqq \left\{\boxed{\mathsf{ij}} : 0 \leq i < j \leq 4 \right\} = \left\{\boxed{\mathsf{01}},\boxed{\mathsf{02}},\boxed{\mathsf{03}},\boxed{\mathsf{04}},\boxed{\mathsf{12}},\boxed{\mathsf{13}},\boxed{\mathsf{14}},\boxed{\mathsf{23}},\boxed{\mathsf{24}},\boxed{\mathsf{34}} \right\}. \label{D}
\end{align}
The term ``long count'' is now justified by the fact that the long count of $[X]$ equals the number of long elements of $D$.
We say that an element of $D$ is \emph{$p$-long} if that element is long when $[X]$ has long count $p$.
Concretely, by~\eqref{proto-ineqs long count}, we have the following:
\begin{equation}
    \label{long in X}
    \text{The $p$-long elements of $D$ are } \begin{cases}
        \text{none}, & p = 0,\\
        \boxed{\mathsf{04}}, & p = 1, \\
        \text{$\boxed{\mathsf{03}}$ and $\boxed{\mathsf{04}}$}, & p = 2.
    \end{cases}
\end{equation}
Consider the following staircase arrangement of the elements of $D$:
\begin{equation}
    \label{table X}
    \begin{ytableau}[\mathsf]
    {01} & {02} & {03} & {04} \\
    \none & {12} & {13} & {14} \\
    \none & \none & {23} & {24}\\
    \none & \none & \none & {34}
\end{ytableau}
\end{equation}
Note that the entries in the top row of~\eqref{table X} recover the canonical vector $\mathbf{x}$, since $\boxed{\mathsf{0j}} = x_j$.
Moreover, since $x_j - x_i = (x_{i+1} - x_i) + (x_{i+2}-x_{i+1}) + \cdots + (x_j - x_{j-1})$, each off-diagonal entry is the sum of the diagonal entries lying weakly southwest of it:
\begin{equation}
    \label{sums ID}
    \boxed{\mathsf{ij}} = \sum_{d = i}^{j-1} \boxed{\mathsf{d, d+1}} \qquad \text{for all six pairs $(i,j)$ such that $j-i > 1$}.
\end{equation}

\begin{dfn}[Difference tables]
    \label{def:difference table}
    Let $(p,q) \in \{(0,1), (0,2), (1,1), (1,2), (2,2)\}$.
    Each permutation $\pi$ of the set $D$ gives rise to a unique \emph{$(p,q)$-difference table}, obtained from~\eqref{table X} as follows:
    \begin{itemize}
    \item Permute the entries in~\eqref{table X} via $\pi$.
    \item Place a bar over each $p$-long entry.
    \item Place a bar over the entry in each position corresponding to a $q$-long entry in~\eqref{table X}.
\end{itemize}
We define
\[
\D(p,q) \coloneqq \Big\{ \text{$(p,q)$-difference tables} \Big\},
\]
which contains $10!$ many elements (one for each permutation $\pi$ of $D$).
Given a difference table $\Delta \in \D(p,q)$, we write 
\begin{align*}
    \boxed{\Delta(\mathsf{ij})} & \coloneqq \text{ the entry of $\Delta$ in the position corresponding to $\boxed{\mathsf{ij}}$ in~\eqref{table X}},\\
    \mathbf{y}(\Delta) & \coloneqq \left(\:\boxed{\Delta(\mathsf{01})}, \boxed{\Delta(\mathsf{02})}, \boxed{\Delta(\mathsf{03})}, \boxed{\Delta(\mathsf{04})}\:\right) = \text{top row of $\Delta$}.
\end{align*}
\end{dfn}

The idea is to view each $(p,q)$-difference table $\Delta$ as a generic pair of homometric bracelets with long counts $p$ and $q$.
In particular, the bracelet with long count $p$ is assumed to have canonical vector $\mathbf{x}$, and the bracelet with long count $q$ is assumed to have canonical vector $\mathbf{y}(\Delta)$.
In light of the geometric constraints on canonical vectors (see Lemma~\ref{lemma:unique rep}), the ultimate goal will  be to characterize all pairs $(\mathbf{x}, \mathbf{y}(\Delta)) \in \R^4 \times \R^4$ which truly give canonical vectors of 5-point bracelets.
This motivates the following definition.
(See Appendix~\ref{app:example} for a full example in which we construct a difference table and implement the following definition of its \emph{solution set}.)

\begin{dfn}[Solution set of a difference table]
    \label{def:X and XY}
    Fix $(p,q) \in \{(0,1), (0,2), (1,1), (1,2), (2,2)\}$, and consider the following linear system of equations and inequalities in the variable $\mathbf{x} = (x_1,x_2,x_3,x_4)$:
\begin{align}
    & 0 < x_1 < x_2 < x_3 < x_4, \label{positive}\\[1ex]
    &\begin{cases}
        \boxed{\mathsf{04}} \leq 1/2, & p=0,\\
        \boxed{\mathsf{04}} > 1/2 \text{ and } \boxed{\mathsf{03}} \leq 1/2 \text{ and } \boxed{\mathsf{14}} \leq 1/2, & p=1,\\
        \boxed{\mathsf{03}} > 1/2 \text{ and } \boxed{\mathsf{02}} < 1/2 \text{ and } \boxed{\mathsf{14}} \leq 1/2, & p=2,
    \end{cases} \label{ineqs long count p}\\[2ex]
    & \begin{cases}
        \boxed{\mathsf{01}} \leq \boxed{\mathsf{34}}, \text{ and if }\boxed{\mathsf{01}} = \boxed{\mathsf{34}} \text{ then } \boxed{\mathsf{12}} \leq \boxed{\mathsf{23}}, & p=0 \text{ or }1,\\
        \boxed{\mathsf{01}} \leq 1 - \boxed{\mathsf{04}}, \text{ and if }\boxed{\mathsf{01}} = 1 - \boxed{\mathsf{04}} \text{ then } \boxed{\mathsf{12}} \leq \boxed{\mathsf{34}}, & p = 2,
    \end{cases} \label{ineqs canonical ordering p}\\[1ex]
    & \boxed{\Delta(\mathsf{ij})} = \sum_{d = i}^{j-1} \boxed{\Delta(\mathsf{d, d+1})} \quad \text{for all six pairs $(i,j)$ such that $j-i>1$}, \label{eqs six sums}\\[1ex]
    & \text{same as~\eqref{ineqs long count p} and~\eqref{ineqs canonical ordering p}, but with each $\boxed{\mathsf{ij}}$ replaced by $\boxed{\Delta(\mathsf{ij})}$ and with $p$ replaced by $q$}, \label{same but for q}\\[1ex]
    & \begin{cases}
        \mathbf{x} \neq \mathbf{y}(\Delta), & p \neq q,\\
        \mathbf{x} < \mathbf{y} \text{ in lexicographical order,} & p = q,
    \end{cases} \label{noncongruent}
\end{align}
where the lexicographical condition means that the first nonzero coordinate of $\mathbf{x} - \mathbf{y}$ is negative.
(The purpose of this condition is to prevent both $(\mathbf{x}, \mathbf{y})$ and $(\mathbf{y}, \mathbf{x})$ from occurring in $\XY(\Delta)$.)
For $\Delta \in \D(p,q)$, define 
    \begin{align*}
        \X(\Delta) &\coloneqq \Big\{ \mathbf{x} = (x_1, x_2, x_3, x_4) \in \R^4 : \text{$\mathbf{x}$ satisfies the system \eqref{positive}--\eqref{noncongruent}} \Big\}, \\
        \XY(\Delta) &\coloneqq \Big\{ \big(\mathbf{x}, \: \mathbf{y}(\Delta) \big) : \mathbf{x} \in \X(\Delta) \Big\}.
    \end{align*}
    We call $\XY(\Delta)$ the \emph{$(p,q)$-solution set} of $\Delta$ (or just the \emph{solution set}, when the $(p,q)$ is clear from context).
\end{dfn}

\begin{lemma}
    \label{lemma:solutions}
    Let $(p,q) \in \{(0,1), (0,2), (1,1), (1,2), (2,2)\}$.
    We have a bijection
    \[
        \left\{ \begin{array}{c} 
        \textup{unordered pairs of distinct} \\ \textup{homometric 5-point bracelets} \\
        \textup{with long counts $p$ and $q$}
        \end{array} \right\} \longrightarrow \;\; \bigcup_{\mathclap{\Delta \in \D(p,q)}} \: \XY(\Delta), 
    \]
    given by mapping an unordered pair of bracelets to the unique ordered pair of its canonical vectors occurring in the right-hand side.
\end{lemma}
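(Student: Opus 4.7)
The plan is to exhibit the bijection explicitly and verify both directions by unpacking definitions together with one focused case analysis. Given an unordered pair $\{[X],[Y]\}$ of distinct homometric 5-point bracelets with long counts $p$ and $q$, send it to the ordered pair $(\mathbf{x},\mathbf{y})$ of canonical vectors produced by Lemma~\ref{lemma:unique rep}: order so that $[X]$ has long count $p$ when $p \neq q$, and break ties lexicographically when $p = q$, in accordance with~\eqref{noncongruent}. The inverse map sends $(\mathbf{x},\mathbf{y}) \in \XY(\Delta)$ to the unordered pair of bracelets whose canonical vectors are $\mathbf{x}$ and $\mathbf{y}$; well-definedness is again Lemma~\ref{lemma:unique rep}.

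\emph{Forward direction.} Given $\{[X],[Y]\}$, homometry furnishes a bijection $\sigma$ between the ten positions of the staircases~\eqref{table X} for $[X]$ and for $[Y]$ such that matched positions carry equal cyclic distances. Define $\pi(\boxed{\mathsf{ij}}) \coloneqq \boxed{\mathsf{i'j'}}$, where $(i',j') = \sigma(i,j)$, and let $\Delta$ be the resulting $(p,q)$-difference table. The crux of the argument is a four-case analysis indexed by whether the entry $\boxed{\mathsf{i'j'}}$ occupying position $(i,j)$ of $\Delta$ is $p$-long and whether the position $(i,j)$ itself is $q$-long. Using~\eqref{short long distance} to translate between differences and distances in each case, one checks that once its zero, one, or two bars are resolved, the entry evaluates to exactly $y_j - y_i$. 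Applied at the top row, this yields $\mathbf{y}(\Delta) = \mathbf{y}$; applied throughout, it yields the sum relations~\eqref{eqs six sums}. The remaining constraints~\eqref{positive}--\eqref{ineqs canonical ordering p} and their $q$-analogue~\eqref{same but for q} encode exactly Lemma~\ref{lemma:unique rep}, while~\eqref{noncongruent} is forced by the distinctness of $[X]$ and $[Y]$ together with the ordering convention. Thus $(\mathbf{x},\mathbf{y}) \in \XY(\Delta) \subseteq \bigcup_{\Delta \in \D(p,q)} \XY(\Delta)$.

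\emph{Reverse direction and injectivity.} Starting from $(\mathbf{x},\mathbf{y}) \in \XY(\Delta)$, conditions~\eqref{positive}--\eqref{ineqs canonical ordering p} for $\mathbf{x}$ and their counterparts via~\eqref{same but for q} for $\mathbf{y}$ are precisely the criteria of Lemma~\ref{lemma:unique rep}, so $\mathbf{x}$ and $\mathbf{y}$ arise as canonical vectors of unique 5-point bracelets $[X]$ and $[Y]$ with long counts $p$ and $q$; condition~\eqref{noncongruent} makes them distinct. Running the same four-case analysis in reverse shows that each entry of $\Delta$ contributes the same cyclic distance to both $[X]$'s and $[Y]$'s distance multisets, proving homometry. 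Injectivity is then immediate from Lemma~\ref{lemma:unique rep}, since canonical vectors are unique per bracelet and the ordering convention is deterministic. The main (and essentially only) technical obstacle throughout is the four-case bar-cancellation argument; once that is in hand, the rest amounts to bookkeeping against the definitions of canonical vectors and of the solution set.
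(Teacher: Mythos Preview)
Your proposal is correct and follows essentially the same route as the paper's proof: construct $\Delta$ from a distance-matching permutation, verify that the bar mechanism forces each entry of $\Delta$ to evaluate to $y_j - y_i$, and then check the defining constraints of $\XY(\Delta)$ against Lemma~\ref{lemma:unique rep}. Your ``four-case analysis'' (entry $p$-long or not, position $q$-long or not) is just a slightly more explicit unpacking of what the paper handles in two sentences---first barring the $p$-long entries to render everything short, then barring the $q$-long positions to recover the $y$-differences---so the arguments are the same in substance.
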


\begin{proof}
Let $[X]$ and $[Y]$ be distinct homometric 5-point bracelets, with canonical vectors $\mathbf{x} = (x_1, x_2, x_3, x_4)$ and $\mathbf{y} = (y_1, y_2, y_3, y_4)$, respectively.
Suppose $[X]$ has long count $p$, and $[Y]$ has long count $q$; if $p = q$, assume (without loss of generality) that $\mathbf{x} < \mathbf{y}$ with respect to lexicographical order.
We must first show that there exists some $\Delta \in \D(p,q)$ such that $(\mathbf{x}, \mathbf{y}) \in \XY(\Delta)$.

By definition of homometry, $[X]$ and $[Y]$ have the same distance multiset;
thus there is a permutation $\pi$ of $D$ such that, for all $\boxed{\mathsf{ij}} \in D$,
    \[
    d(y_i, y_j) = d(x_{i'}, x_{j'}) \text{ if } \pi\!\left(\boxed{\mathsf{ij}}\right) = \boxed{\mathsf{i'j'}},
    \]
where we set $x_0 = y_0 \coloneqq 0$.
Let $\Delta \in \D(p,q)$ be the $(p,q)$-difference table corresponding to this permutation $\pi$.
To show that $\mathbf{x} \in \X(\Delta)$, we verify each constraint~\eqref{positive}--\eqref{noncongruent}.
The constraints~\eqref{positive}--\eqref{ineqs canonical ordering p} are automatically satisfied due to~\eqref{proto-positive}--\eqref{proto-ineqs canonical ordering}.
Next, consider the construction of $\Delta$ according to Definition~\ref{def:difference table}, after having permuted the entries of~\eqref{table X} via $\pi$.
In particular, by first placing a bar over the $p$-long entries, we render all entries short (since $[X]$ has long count $p$);
consequently, the position corresponding to each $\boxed{\mathsf{ij}}$ in~\eqref{table X} contains the entry $d(y_i, y_j) = d(x_{i'}, x_{j'})$ rather than the entry $x_{j'} - x_{i'}$.
By then placing a bar in each position corresponding to a $q$-long entry in~\eqref{table X}, we guarantee that each 
    \begin{equation}  
        \label{entries are y diffs}
        \boxed{\Delta(\mathsf{ij})} = y_j - y_i
    \end{equation}
    rather than $d(y_i, y_j)$.
Therefore the relations~\eqref{sums ID} carry over to their corresponding positions in $\Delta$, and thus~\eqref{eqs six sums} is satisfied.
In particular,~\eqref{entries are y diffs} implies that $\mathbf{y} = \mathbf{y}(\Delta)$, and therefore~\eqref{same but for q} is satisfied due to~\eqref{proto-ineqs long count}--\eqref{proto-ineqs canonical ordering}.
By this fact $\mathbf{y} = \mathbf{y}(\Delta)$, our initial assumptions on $\mathbf{x}$ and $\mathbf{y}$ also imply~\eqref{noncongruent}.
Having thus established that $\mathbf{x} \in \X(\Delta)$, and that $\mathbf{y} = \mathbf{y}(\Delta)$, it follows by Definition~\ref{def:X and XY} that $(\mathbf{x}, \mathbf{y}) \in \XY(\Delta)$.
Moreover, by~\eqref{noncongruent} it is impossible that $(\mathbf{y}, \mathbf{x})$ occurs in $\XY(\Delta')$ for some $(p,q)$-difference table $\Delta'$, and so the map in the lemma is well-defined.

To show that the map is a bijection, we exhibit its inverse as follows.
Let $\Delta \in \D(p,q)$, and let $(\mathbf{x}, \mathbf{y}) \in \XY(\Delta)$, with coordinates given by $\mathbf{x} = (x_1, x_2, x_3, x_4)$ and $\mathbf{y} = (y_1, y_2, y_3, y_4)$.
By Definition~\ref{def:X and XY}, we have $\mathbf{x} \in \X(\Delta)$ and $\mathbf{y} = \mathbf{y}(\Delta)$. 
Set $X = \{0, x_1, x_2, x_3, x_4\}$ and $Y = \{0, y_1, y_2, y_3, y_4\}$.
By Lemma~\ref{lemma:unique rep} and by~\eqref{positive}--\eqref{ineqs canonical ordering p}, $[X]$ has canonical vector $\mathbf{x}$ and long count $p$.
By~\eqref{eqs six sums}, we have $0 <y_1 < y_2 < y_3 < y_4 < 1$.
The relations~\eqref{eqs six sums} also imply~\eqref{entries are y diffs} for every entry of $\Delta$; thus (using the same argument as in the first part of the proof describing the bars in Definition~\ref{def:difference table}) by~\eqref{same but for q} and the fact that $\Delta \in \D(p,q)$, $[Y]$ has canonical vector $\mathbf{y}$ and long count $q$.
Reversing the argument in the first part of the proof, we have an equality of multisets $\mset{d(x_i, x_j) : 0 \leq i < j \leq 4} = \mset{d(y_i, y_j) : 0 \leq i < j \leq 4}$, and thus $[X]$ and $[Y]$ are homometric.
Finally,~\eqref{noncongruent} guarantees that $[X] \neq [Y]$.
Hence the unordered pair $\{[X], [Y]\}$ is the preimage of $(\mathbf{x}, \mathbf{y})$.
\end{proof}

\section{Classification of homometry classes for the circle group}

\label{sec:classification cts}

The main result of this section (Proposition~\ref{prop:classification cts}) is a classification of the nontrivial homometry classes of 5-point bracelets.
In light of Lemma~\ref{lemma:solutions}, for each $(p,q)$, we seek a concrete description of the union
\begin{equation}
    \label{big union}
    \bigcup_{\mathclap{\Delta \in \D(p,q)}} \: \XY(\Delta),
\end{equation}
where $\D(p,q)$ is the set of $(p,q)$-difference tables (Definition~\ref{def:difference table}) and $\XY(\Delta)$ is the solution set of $\Delta$ (Definition~\ref{def:X and XY}).
Indeed,~\eqref{big union} is a sort of ``state space'' (via canonical vectors) for all unordered distinct homometric pairs with long counts $p$ and $q$.
We emphasize the word ``pairs'' in the previous sentence, because we do require more than a description of~\eqref{big union} in order to detect full homometry \emph{classes}, rather than just homometric \emph{pairs}.
For example, distinct homometric \emph{triples} will correspond to ordered triples $(\mathbf{x}, \mathbf{y}(\Delta), \mathbf{y}(\Delta'))$ with distinct entries, where $\mathbf{x} \in \X(\Delta) \cap \X(\Delta')$ for difference tables $\Delta \in \D(p,q)$ and $\Delta' \in \D(p,q')$.
For this reason, we truly do need to work with the difference tables themselves, rather than simply passing directly to the union~\eqref{big union}.

This, of course, raises the difficulty of working with the unwieldy number ($10! \approx 3.6 \times 10^6$) of difference tables in each $\D(p,q)$.
One should suspect, however, that many of the solution sets $\XY(\Delta)$ are in fact empty, or (more generally) are properly contained in other solution sets $\XY(\Delta')$.
It turns out that this is true of nearly \emph{all} difference tables.
To make this rigorous, we implement the following simple algorithm, which outputs a minimal set $\M(p,q) \subset \D(p,q)$ such that
\begin{equation}
    \label{minimal union}
    \bigcup_{\mathclap{\Delta \in \D(p,q)}} \: \XY(\Delta) = \: \: \bigcup_{\mathclap{\Delta \in \M(p,q)}} \:\XY(\Delta).
\end{equation}
Note that the system~\eqref{positive}--\eqref{noncongruent}, which defines each solution set $\XY(\Delta)$, consists entirely of linear equations and inequalities, and therefore is quite straightforward to program.

\begin{alg}\
    \label{alg:Mpq}

\textbf{Input:} a pair $(p,q) \in \{(0,1), (0,2), (1,1), (1,2), (2,2)\}$.

\textbf{Output:} a minimal set $\M(p,q)$ satisfying~\eqref{minimal union}.

\begin{itemize}

    \item Initialize $A \leftarrow \varnothing$.

    \item For each permutation $\pi$ of $D$ (the set  defined in~\eqref{D}):

    \begin{itemize}
        \item Construct the $(p,q)$-difference table $\Delta$ corresponding to $\pi$, as described in Definition~\ref{def:difference table}.

        \item If there exists some $\Delta' \in A$ such that $\XY(\Delta) \subseteq \XY(\Delta')$, then skip this $\pi$ and continue to the next permutation.

        \item Else, update $A \leftarrow \Big(A \setminus \{\Delta'' \in A : \XY(\Delta'') \subseteq \XY(\Delta) \} \Big) \cup \{\Delta\}$.
        
    \end{itemize}

    \item Set $\M(p,q) \coloneqq A$.
    
\end{itemize}
    
\end{alg}

Implementing Algorithm~\ref{alg:Mpq} for each $(p,q)$ returns the output shown below.\footnote{Our Mathematica notebook can be accessed at \url{https://github.com/WilliamQErickson/homometry-proof}.}
In particular, out of a total of $5 \times 10! \approx$ 18 million difference tables, we can restrict our attention to the following 22:
\begingroup
\arraycolsep=1.5pt
\begin{align*}
\M(0,1) &= \left\{
\begin{array}{ccccccccc}
\scalebox{.55}{\begin{ytableau}[\mathsf]
    {01} & {12} & {13} & \mybar{14} \\
    \none & {34} & {24} & {04} \\
    \none & \none & {23} & {03}\\
    \none & \none & \none & {02}
\end{ytableau}}_{\textstyle ,} & 
\scalebox{.55}{\begin{ytableau}[\mathsf]
    {34} & {23} & {13} & \mybar{03} \\
    \none & {01} & {02} & {04} \\
    \none & \none & {12} & {14}\\
    \none & \none & \none & {24}
\end{ytableau}}_{\textstyle ,} & 
\scalebox{.55}{\begin{ytableau}[\mathsf]
    {24} & {12} & {02} & \mybar{04} \\
    \none & {23} & {34} & {14} \\
    \none & \none & {01} & {03}\\
    \none & \none & \none & {13}
\end{ytableau}}_{\textstyle ,} &
\scalebox{.55}{\begin{ytableau}[\mathsf]
    {24} & {12} & {02} & \mybar{14} \\
    \none & {34} & {23} & {04} \\
    \none & \none & {01} & {03}\\
    \none & \none & \none & {13}
\end{ytableau}}_{\textstyle ,} &
\scalebox{.55}{\begin{ytableau}[\mathsf]
    {02} & {23} & {24} & \mybar{03} \\
    \none & {01} & {12} & {04} \\
    \none & \none & {34} & {14}\\
    \none & \none & \none & {13}
\end{ytableau}}_{\textstyle ,} &
\scalebox{.55}{\begin{ytableau}[\mathsf]
    {01} & {23} & {13} & \mybar{14} \\
    \none & {02} & {34} & {04} \\
    \none & \none & {12} & {24}\\
    \none & \none & \none & {03}
\end{ytableau}}_{\textstyle ,} &
\scalebox{.55}{\begin{ytableau}[\mathsf]
    {24} & {12} & {13} & \mybar{02} \\
    \none & {01} & {34} & {04} \\
    \none & \none & {23} & {14}\\
    \none & \none & \none & {03}
\end{ytableau}}_{\textstyle ,} &
\scalebox{.55}{\begin{ytableau}[\mathsf]
    {34} & {23} & {13} & \mybar{04} \\
    \none & {01} & {02} & {14} \\
    \none & \none & {12} & {24}\\
    \none & \none & \none & {03}
\end{ytableau}}_{\textstyle ,} &
\scalebox{.55}{\begin{ytableau}[\mathsf]
    {12} & {02} & {13} & \mybar{04} \\
    \none & {01} & {23} & {14} \\
    \none & \none & {34} & {03}\\
    \none & \none & \none & {24}
\end{ytableau}} \\[4ex]
\mathsf{A_1} & \mathsf{A_2} & \mathsf{D_1} & \mathsf{D_2} & \mathsf{D_3} & \mathsf{F_1} & \mathsf{F_2} & \mathsf{G_1} & \mathsf{G_2}
\end{array}
\right\}, \\[3ex]
\M(0,2) &= \left\{ \begin{array}{c}
\scalebox{.55}{\begin{ytableau}[\mathsf]
    {13} & {03} & \mybar{04} & \mybar{14} \\
    \none & {01} & {02} & {24} \\
    \none & \none & {12} & {23}\\
    \none & \none & \none & {34}
\end{ytableau}} \\[4ex]
\mathsf{B_1}
\end{array}
\right\},\\[3ex]
\M(1,1) &= \left\{
\begin{array}{ccccc}
\scalebox{.55}{\begin{ytableau}[\mathsf]
    {12} & {02} & {03} & \mybar{24} \\
    \none & {01} & {34} & {14} \\
    \none & \none & {23} & \mybar{04}\\
    \none & \none & \none & {13}
\end{ytableau}}_{\textstyle ,} & 
\scalebox{.55}{\begin{ytableau}[\mathsf]
    {01} & {02} & \mybar{04} & \mybar{14} \\
    \none & {12} & {34} & {24} \\
    \none & \none & {13} & {03}\\
    \none & \none & \none & {23}
\end{ytableau}}_{\textstyle ,} & 
\scalebox{.55}{\begin{ytableau}[\mathsf]
    {12} & {13} & {03} & \mybar{02} \\
    \none & {23} & {34} & {14} \\
    \none & \none & {01} & \mybar{04}\\
    \none & \none & \none & {24}
\end{ytableau}}_{\textstyle ,} &
\scalebox{.55}{\begin{ytableau}[\mathsf]
    {02} & \mybar{04} & {03} & \mybar{12} \\
    \none & {01} & {23} & {14} \\
    \none & \none & {34} & {13}\\
    \none & \none & \none & {24}
\end{ytableau}}_{\textstyle ,} &
\scalebox{.55}{\begin{ytableau}[\mathsf]
    {34} & {23} & {03} & \mybar{13} \\
    \none & {01} & {24} & {14} \\
    \none & \none & {02} & \mybar{04}\\
    \none & \none & \none & {12}
\end{ytableau}} \\[4ex]
\mathsf{A_3} & \mathsf{B_2} & \mathsf{B_3} & \mathsf{B_6} & \mathsf{F_3}
\end{array}
\right\}, \\[3ex]
\M(1,2) &= \left\{
\begin{array}{cccc}
\scalebox{.55}{\begin{ytableau}[\mathsf]
    {02} & {03} & \mybar{14} & {04} \\
    \none & {23} & {01} & {24} \\
    \none & \none & {13} & {34}\\
    \none & \none & \none & {12}
\end{ytableau}}_{\textstyle ,} & 
\scalebox{.55}{\begin{ytableau}[\mathsf]
    {02} & {03} & \mybar{14} & \mybar{24} \\
    \none & {23} & {34} & \mybar{04} \\
    \none & \none & {13} & {01}\\
    \none & \none & \none & {12}
\end{ytableau}}_{\textstyle ,} & 
\scalebox{.55}{\begin{ytableau}[\mathsf]
    {03} & {24} & \mybar{14} & {04} \\
    \none & {12} & {01} & {34} \\
    \none & \none & {13} & {02}\\
    \none & \none & \none & {23}
\end{ytableau}}_{\textstyle ,} &
\scalebox{.55}{\begin{ytableau}[\mathsf]
    {12} & \mybar{04} & \mybar{14} & \mybar{03} \\
    \none & {24} & {13} & {02} \\
    \none & \none & {01} & {34}\\
    \none & \none & \none & {23}
\end{ytableau}} \\[4ex]
\mathsf{C} & \mathsf{D_4} & \mathsf{G_3} & \mathsf{G_4}
\end{array}
\right\}, \\[3ex]
\M(2,2) &= \left\{
\begin{array}{cccc}
\scalebox{.55}{\begin{ytableau}[\mathsf]
    {24} & \mybar{03} & {04} & \mybar{02} \\
    \none & {01} & {13} & {14} \\
    \none & \none & {12} & {23}\\
    \none & \none & \none & {34}
\end{ytableau}}_{\textstyle ,} & 
\scalebox{.55}{\begin{ytableau}[\mathsf]
    {12} & \mybar{03} & {04} & \mybar{02} \\
    \none & {24} & {13} & {14} \\
    \none & \none & {01} & {23}\\
    \none & \none & \none & {34}
\end{ytableau}}_{\textstyle ,} & 
\scalebox{.55}{\begin{ytableau}[\mathsf]
    {12} & {23} & {03} & \mybar{24} \\
    \none & {34} & \mybar{04} & {14} \\
    \none & \none & {02} & {13}\\
    \none & \none & \none & {01}
\end{ytableau}} & \\[4ex]
\mathsf{B_4} & \mathsf{B_5} & \mathsf{F_4}
\end{array}
\right\}.
\end{align*}
\endgroup

\noindent 
Our labeling scheme above will ultimately align with the seven types $\mathsf{A}$--$\mathsf{G}$ in our main classification result; Type $\mathsf{E}$ is missing at this stage because it consists of homometric triples, which will require further analysis below in~\eqref{intersections}.
Although the order of taking permutations in the algorithm affects which particular tables constitute the output, nonetheless these 22 tables \emph{are} uniquely determined up to equivalence, where $\Delta$ and $\Delta'$ are considered equivalent if $\XY(\Delta) = \XY(\Delta')$.

In Appendix~\ref{app:tables}, we record the explicit solution set $\XY(\Delta)$ for each of the 22 tables $\Delta$ shown above; these are easily obtained by computer using Definition~\ref{def:X and XY}, and are parametrized in terms of certain free variable(s) $x_i$.
We have placed them in Appendix~\ref{app:tables} because we will still display them all --- after carefully reparametrizing them --- throughout the proof of Proposition~\ref{prop:classification cts} below (giving each explicit reparametrization so that it can be easily lined up with the data in Appendix~\ref{app:tables}, if the reader so desires).
It turns out that these 22 solution sets are \emph{almost} disjoint; more precisely, $\X(\Delta) \cap \mathbf{X}_{pq'}(\Delta')$ is empty for \emph{almost} all distinct $\Delta \in \M(p,q)$ and $\Delta' \in \M(p,q')$.
In fact, checking all such pairs by computer, we obtain the following three nonempty intersections:

\begin{equation}
    \label{intersections}
\begin{array}{|c|c|l|}
\hline

(\Delta, \Delta') & (p,q,q') & \vpad{\Big\{ \big(\mathbf{x}, \mathbf{y}(\Delta), \mathbf{y}(\Delta') \big) : \mathbf{x} \in \X(\Delta) \cap \mathbf{X}_{pq'}(\Delta') \Big\}} \\ \hline

(\mathsf{A_1}, \mathsf{A_2}) & (0,1,1) & \vpad{\left\{ \left( \myvec{x_1 \\ 1/6 + x_1 \\ 1/3 + x_1 \\ 1/2}_{\textstyle ,} \myvec{x_1 \\ 1/6 \\ 1/3 \\ 1/2 + x_1}_{\textstyle ,} \myvec{1/6 
 - x_1 \\ 1/6 \\ 1/3 \\ 2/3 - x_1}  \right) : 0 < x_1 \leq 1/12 \right\}} \\ \hline

(\mathsf{A_3}, \mathsf{B_3}) & (1,1,1) & \vpad{\left\{ \left( \myvec{1/10 \\ 3/10 \\ 4/10 \\ 6/10}_{\textstyle ,} \myvec{2/10 \\ 3/10 \\ 4/10 \\ 7/10}_{\textstyle ,} \myvec{2/10 \\ 3/10 \\ 4/10 \\ 7/10}  \right) \right\}} \\ \hline

(\mathsf{C}, \mathsf{D_4}) & (1,2,2) & \vpad{\left\{ \left( \myvec{3/12 \\ 4/12 \\ 5/12 \\ 8/12}_{\textstyle ,} \myvec{4/12 \\ 5/12 \\ 7/12 \\ 8/12}_{\textstyle ,} \myvec{4/12 \\ 5/12 \\ 7/12 \\ 8/12} \right) \right\}} \\ \hline

\text{All other pairs} & - & \vpad{\varnothing} \\ \hline

\end{array}
\end{equation}

\noindent 
Upon inspecting the vectors in~\eqref{intersections}, it is clear that every intersection $\X(\Delta) \cap \mathbf{X}_{pq'}(\Delta') \cap \mathbf{X}_{pq''}(\Delta'')$ for \emph{three} distinct difference tables is empty.

\begin{prop}
    \label{prop:classification cts}
    Every nontrivial homometry class of 5-point bracelets  belongs to exactly one of the following seven types:

    \normalfont

    \[
    \begin{array}{ll}
        \text{Type $\mathsf{A}$:} & \Big\{ \left[0, t, t+u, \frac{1}{2} + t - u, \frac{1}{2}\right], \;  \left[0, \frac{1}{2} + t, t+u, \frac{1}{2} + t - u, \frac{1}{2} \right] \Big\}, \\[1ex]
        & \text{for all $t,u \in \left(0, \frac{1}{4}\right)$ such that $t \neq u$, and if $u = \frac{1}{6}$ then $t = \frac{1}{12}$.} \\[2ex]

        \text{Type $\mathsf{B}$:} & \Big\{ \left[0, t, \frac{1}{5}, \frac{2}{5}, \frac{2}{5} + t \right], \;  \left[0, t, \frac{1}{5}, \frac{1}{5} + t, \frac{3}{5} \right] \Big\}, \text{ for all } t \in \left(0, \frac{1}{2}\right] \setminus \left\{\frac{1}{10}, \frac{1}{5}, \frac{3}{10}, \frac{2}{5} \right\}. \\[2ex]

        \text{Type $\mathsf{C}$:} & \Big\{ \left[0, \frac{1}{6} + t, \frac{1}{3}, \frac{1}{3} +t, \frac{2}{3} \right], \;  \left[0, \frac{1}{3}, \frac{1}{3} + t, \frac{1}{2} + t, \frac{2}{3} \right] \Big\}, \text{ for all } t \in \left(0, \frac{1}{12}\right]. \\[2ex]

        \text{Type $\mathsf{D}$:} & \Big\{ \left[t, \frac{1}{6} - t, \frac{1}{3}, \frac{1}{3} + t, \frac{1}{2} \right], \;  \left[0, t, \frac{1}{6} - t, \frac{1}{3} + t, \frac{5}{6} \right] \Big\}, \text{ for all } t \in \left(0, \frac{1}{4}\right) \setminus \left\{\frac{1}{12}, \frac{1}{6} \right\}. \\[2ex]

        \text{Type $\mathsf{E}$:} & \Big\{ \left[0, t, \frac{1}{6} + t, \frac{1}{3} + t, \frac{1}{2} \right], \;  \left[0, t, \frac{1}{6}, \frac{1}{3}, \frac{1}{2} + t \right], \; \left[0, \frac{1}{6} - t, \frac{1}{6}, \frac{1}{3}, \frac{2}{3} - t \right] \Big\}, \text{ for all } t \in \left(0, \frac{1}{12}\right). \\[2ex]

        \text{Type $\mathsf{F}$:} & \Big\{ \left[0, t, \frac{1}{8}, \frac{1}{4} + t, \frac{1}{2}\right], \;  \left[0, t, \frac{1}{8} + t, \frac{1}{4}, \frac{1}{2} + t \right] \Big\}, \text{ for all } t \in \left(0, \frac{1}{2}\right) \setminus \left\{\frac{1}{8}, \frac{1}{4}, \frac{3}{8} \right\}. \\[2ex]

        \text{Type $\mathsf{G}$:} & \phantom{\text{or }} \big\{ \left[0, \frac{1}{20}, \frac{2}{20}, \frac{6}{20}, \frac{9}{20} \right], \; \left[0, \frac{3}{20}, \frac{4}{20}, \frac{5}{20}, \frac{11}{20} \right] \big\} \\[1ex]
        & \text{or } \big\{ \left[0, \frac{1}{20}, \frac{4}{20}, \frac{7}{20}, \frac{9}{20} \right], \; \left[0, \frac{3}{20}, \frac{4}{20}, \frac{6}{20}, \frac{11}{20} \right] \big\} \\[1ex]
        & \text{or } \big\{ \left[0, \frac{4}{20}, \frac{5}{20}, \frac{7}{20}, \frac{13}{20} \right], \; \left[0, \frac{7}{20}, \frac{8}{20}, \frac{11}{20}, \frac{13}{20} \right] \big\} \\[1ex]
        & \text{or } \big\{ \left[0, \frac{2}{20}, \frac{7}{20}, \frac{8}{20}, \frac{11}{20} \right], \; \left[0, \frac{5}{20}, \frac{9}{20}, \frac{11}{20}, \frac{12}{20} \right] \big\}.
    \end{array}
    \]
    
\end{prop}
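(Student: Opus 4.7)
The plan is to assemble the classification out of the machinery built up in Sections~\ref{sec:5-point bracelets} and~\ref{sec:classification cts}, namely Lemma~\ref{lemma: valid pq}, Lemma~\ref{lemma:solutions}, Algorithm~\ref{alg:Mpq} together with its output $\bigcup_{p,q} \M(p,q)$ of 22 difference tables, and the intersection table~\eqref{intersections}. By Lemma~\ref{lemma: valid pq} every nontrivial homometry class is carried by canonical vectors with long-count pair in $\{(0,1),(0,2),(1,1),(1,2),(2,2)\}$, and by Lemma~\ref{lemma:solutions} and the defining property~\eqref{minimal union} of $\M(p,q)$ the unordered distinct homometric pairs with long counts $(p,q)$ correspond bijectively to elements of $\bigcup_{\Delta\in \M(p,q)}\XY(\Delta)$. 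Thus the entire problem reduces to (i) solving the linear system~\eqref{positive}--\eqref{noncongruent} for each of the 22 tables, (ii) reparametrizing the resulting solution sets so that tables sharing the same letter label ($\mathsf{A}_\bullet, \mathsf{B}_\bullet,\ldots,\mathsf{G}_\bullet$) collapse into a single family (Types $\mathsf{A}$, $\mathsf{B}$, \ldots, $\mathsf{G}$), and (iii) treating the non-pair cases arising from the intersections~\eqref{intersections}.

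First, for each of the 22 tables I would instantiate the system~\eqref{positive}--\eqref{noncongruent}: the six equations~\eqref{eqs six sums} plus the bars in $\Delta$ determine $\mathbf{y}(\Delta)$ as an explicit affine function of $\mathbf{x}$, and the remaining inequalities carve out a relatively open polytope in $\R^4$ whose free variables one can read off directly. This computation is carried out table-by-table in Appendix~\ref{app:tables}; for each label letter $\mathsf{L}\in\{\mathsf{A},\ldots,\mathsf{G}\}$, I would then perform an invertible affine change of variables so that the representative chosen in the statement of Type~$\mathsf{L}$ is produced exactly, and so that the different subscripted tables $\mathsf{L}_1,\mathsf{L}_2,\ldots$ give rise either to the same bracelet pair (under the $\mathrm{O}(2)$-action~\eqref{same reps}) or to complementary slices of the parameter range. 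For example, the two tables $\mathsf{A}_1,\mathsf{A}_2$ both lead, after reparametrizing by $(t,u)$, to the pair displayed in Type~$\mathsf{A}$; and similarly $\mathsf{B}_1,\ldots,\mathsf{B}_6$ (respectively $\mathsf{D}_1,\ldots,\mathsf{D}_4$, etc.) cover exactly the parameter range stated for Type~$\mathsf{B}$ (respectively Type~$\mathsf{D}$). The exceptional values of $t,u$ excluded in the statement are precisely those on which the defining inequalities~\eqref{positive}--\eqref{ineqs canonical ordering p} and~\eqref{same but for q} degenerate (two putative points of the bracelet would coincide, or the canonical ordering rule would be violated).

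The triples case is where Type~$\mathsf{E}$ enters the picture. Consulting the intersection table~\eqref{intersections}, the only one-parameter family of common solutions occurs at $(\mathsf{A}_1,\mathsf{A}_2)$ with $(p,q,q')=(0,1,1)$, producing, after relabeling the parameter, exactly the ordered triple displayed in Type~$\mathsf{E}$. The two remaining lines of~\eqref{intersections} are isolated points; one would check by direct substitution that in each case the two candidate ``partners'' actually coincide after a rotation or reflection, so that these intersections contribute not new triples but only single homometric pairs, which are already listed as degenerate instances inside Types~$\mathsf{A}$ and~$\mathsf{B}$ (respectively Types~$\mathsf{C}$ and~$\mathsf{D}$). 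That each listed configuration actually \emph{is} homometric can be verified by a mechanical tally of the ten pairwise cyclic distances, and I would present this as a single side computation per Type.

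The main obstacle is the bookkeeping in step~(ii): there are 22 inequality-cut polytopes to be reparametrized and glued, and one must check by direct comparison of canonical vectors that the resulting seven families are mutually disjoint on the nose, accounting for the $\mathrm{O}(2)$-ambiguity in representative choice. Exhaustiveness, uniqueness, and the pair-versus-triple dichotomy all fall out of~\eqref{intersections} combined with the (machine-checkable) observation that no three distinct solution sets $\X(\Delta)\cap\mathbf{X}_{pq'}(\Delta')\cap\mathbf{X}_{pq''}(\Delta'')$ ever meet. Once the reparametrization dictionary is laid out, the remainder of the argument is a finite case-check that can be compressed into a single table matching each $\M(p,q)$-entry to its stated Type.
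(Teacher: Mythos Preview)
Your proposal is correct and follows essentially the same route as the paper's own proof: reduce via Lemmas~\ref{lemma: valid pq} and~\ref{lemma:solutions} and the minimality property~\eqref{minimal union} to the 22 tables in $\bigcup_{(p,q)}\M(p,q)$, solve and reparametrize each $\XY(\Delta)$ (the paper records these reparametrizations type-by-type, verifying via~\eqref{same reps} that the $[X],[Y]$ match the listed representatives), and use~\eqref{intersections} both to extract the Type~$\mathsf{E}$ triples from $(\mathsf{A}_1,\mathsf{A}_2)$ and to confirm that the other two intersections give $\mathbf{y}(\Delta)=\mathbf{y}(\Delta')$ on the nose, hence pairs rather than triples. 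One small correction: not all of the excluded parameter values arise from degeneracies of~\eqref{positive}--\eqref{same but for q}; the exclusions $t=\tfrac{3}{10}$ in Type~$\mathsf{B}$, $t=\tfrac14$ in Type~$\mathsf{D}$, and the constraint ``$u=\tfrac16\Rightarrow t=\tfrac1{12}$'' in Type~$\mathsf{A}$ are instead forced by the overlaps in~\eqref{intersections} (one must \emph{choose} which Type retains the shared class so that the seven families are genuinely disjoint), so your step~(ii) must incorporate these choices rather than expect them to fall out of the inequality system alone.
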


\begin{proof}
    By Lemma~\ref{lemma: valid pq} and by~\eqref{minimal union}, there is a bijection
    \begin{align}
    \label{big bijection}
    \begin{split}
        \bigcup_{(p,q)} \left(\bigcup_{\Delta \in \M(p,q)} \hspace{-2ex} \XY(\Delta) \right) &\longrightarrow \left\{ \begin{array}{ll} 
        \textup{unordered pairs of distinct} \\ \textup{homometric 5-point bracelets}
        \end{array} \right\},\\
        (\mathbf{x}, \mathbf{y}) & \longmapsto \big\{[0,x_1, x_2, x_3, x_4], \: [0, y_1, y_2, y_3, y_4] \big\}.
    \end{split}
    \end{align}
   Thus, in order to describe the codomain in~\eqref{big bijection}, we can simply inspect the 22 solution sets $\XY(\Delta)$, viewing their elements $(\mathbf{x}, \mathbf{y})$ as pairs of homometric bracelets via~\eqref{big bijection}.
   As mentioned above, the solution sets $\XY(\Delta)$ are given in Appendix~\ref{app:tables}; here, however, we will approach them sequentially by Type $\mathsf{A}$--$\mathsf{G}$ (rather than by long count $(p,q)$ as in Appendix~\ref{app:tables}), in order to line them up via~\eqref{big bijection} with the classification in this proposition.
   Crucially, for each $\XY(\Delta)$ we rewrite the free variable(s) $x_i$ in terms of a new parameter $t$ (and $u$, in Type $\mathsf{A}$), which effectively ``stitches together'' the various solution sets so as to coincide with the parametrization given in the proposition.
   We must also remember to account for the intersections~\eqref{intersections} when we encounter the difference tables $\mathsf{A_1}$, $\mathsf{A_2}$, $\mathsf{A_3}$, $\mathsf{B_3}$, $\mathsf{C}$, and $\mathsf{D_4}$, so as to avoid overlaps between types.

   Throughout the proof, given $(\mathbf{x}, \mathbf{y}) \in \XY(\Delta)$ for some $\Delta$, we will write $X \coloneqq \{0, x_1, x_2, x_3, x_4\}$ and $Y \coloneqq \{0, y_1, y_2, y_3, y_4\}$ to denote the representatives on the right-hand side of~\eqref{big bijection}.

\textbf{Type $\mathsf{A}$.}
There are three difference tables we labeled as Type~$\mathsf{A}$:

\begin{longtable}{|c|c|c|c|c|}
\hline
     $\Delta$ & $(p,q)$ & Parameters &  $(\mathbf{x}, \mathbf{y}) \in \XY(\Delta)$ & For all $t,u$: \\ \hline

    $\mathsf{A_1}$ & $(0,1)$ & $\begin{aligned}t&=x_1, \\ u&=x_2 - x_1\end{aligned}$ & \vpad{ \left( \myvec{t \\ t+u \\ 1/2 + t-u \\ 1/2}_{\textstyle ,} \myvec{t\\u\\1/2 - u \\ 1/2 + t} \right) } & $\begin{array}{c} 0 < t < u/2 < 1/8\\\text{or}\\0 < t = u/2 \leq 1/12\\ \end{array}$  \\ \hline

$\mathsf{A_2}$ & $(0,1)$ & $\begin{aligned}
    t &= 1/2 - x_3,\\
    u &= x_3 - x_2
\end{aligned}$ & \vpad{ \left( \myvec{u-t \\ 1/2 - t - u \\ 1/2 - t \\ 1/2}_{\textstyle ,} \myvec{t \\ u \\ 1/2 - u \\ 1/2 + t} \right) } &
$\begin{array}{c} 0 < u/2 < t < u < 1/4\\ \text{or} \\ 1/12 \leq t = u/2 < 1/8 \end{array}$ \\ \hline

$\mathsf{A_3}$ & $(1,1)$ & $\begin{aligned}
    t &= 1/2 + x_1 - x_3,\\
    u &= 1/2 - x_3
\end{aligned}$ & \vpad{\left( \myvec{t-u \\ 1/2 - 2u \\ 1/2 - u \\ 1/2 + t - u}_{\textstyle ,} \myvec{1/2 - t - u \\ 1/2 - 2u \\ 1/2 - u \\ 1 - t - u} \right)} &
$0 < u < t < 1/4$ \\ \hline

\end{longtable}

\noindent
For each $\Delta$ above, we now use the $(\mathbf{x}, \mathbf{y})$ column to check that $\{[X], [Y]\}$ is the same unordered pair of bracelets given in the proposition for Type~$\mathsf{A}$.
To do this, in light of~\eqref{same reps}, we reproduce those two bracelet representatives in the top row of the following table, and then express them as rotations and/or reflections of $X$ and $Y$:
\[
\begin{array}{|c|r|r|}
\hline
     & \vpad{\left[ 0, \; t, \; t+u, \; \frac{1}{2} + t - u, \; \frac{1}{2} \right]} & \left[ 0, \; \frac{1}{2} + t, \; t+u, \; \frac{1}{2} + t - u, \; \frac{1}{2} \right] \\ \hline
   \vpad{\mathsf{A_1}}  & [X] & \left[ \left\{\left(\frac{1}{2} + t \right) - y : y \in Y \right\} \right] \\ \hline
   \vpad{\mathsf{A_2}} & \left[ \left\{\frac{1}{2} - x : x \in X \right\} \right] & \left[ \left\{\left(\frac{1}{2} + t \right) - y : y \in Y \right\} \right] \\ \hline
   \vpad{\mathsf{A_3}} & \left[ \left\{\left(\frac{1}{2} + t - u \right) - x : x \in X \right\} \right] & \left[ \left\{\left( t + u \right) + y : y \in Y \right\} \right] \\ \hline
\end{array}
\]

It remains to verify the values of the parameters $t$ and $u$ given in the proposition, namely, $t,u \in \left(0, 1/4 \right)$ such that $t \neq u$, and if $u = 1/6$ then $t = 1/12$.
Combining $\mathsf{A_1}$, $\mathsf{A_2}$, and $\mathsf{A_3}$ in the first table above, we confirm that $t,u \in (0,1/4)$ with $t \neq u$.
Referring back to the table of intersections~\eqref{intersections}, we see that $\mathbf{X}_{0,1}(\mathsf{A_1})$ and $\mathbf{X}_{0,1}(\mathsf{A_2})$ intersect precisely when $u = 1/6$ for both $\mathsf{A_1}$ and $\mathsf{A_2}$, and when the $t$ for $\mathsf{A_1}$ equals $1/6$ minus the $t$ for $\mathsf{A_2}$.
(This common value must equal the parameter $x_1$ in~\eqref{intersections}.)
In this case, from the top row of~\eqref{intersections}, we see that $\mathbf{y}(\mathsf{A_1}) \neq \mathbf{y}(\mathsf{A_2})$ as long as $t \neq 1/12$ (for either $\mathsf{A_1}$ or $\mathsf{A_2}$; equivalently, for both of them).
Therefore, if $u=1/6$ and $t \neq 1/12$, then the corresponding bracelets for $\mathsf{A_1}$ and $\mathsf{A_2}$ form a homometry class with \emph{three} bracelets, which is therefore \emph{not} of Type $\mathsf{A}$, but rather of Type $\mathsf{E}$.
We conclude that if $u = 1/6$ in Type $\mathsf{A}$, then necessarily $t=1/12$, as claimed in the proposition.
In this special case $(t,u) = (1/12, 1/6)$, we have $\mathbf{y}(\mathsf{A_1}) = \mathbf{y}(\mathsf{A_2}) = (1/12, 1/6, 1/3, 7/12)$, so the corresponding homometry class (of two bracelets) is common to both $\mathsf{A_1}$ and $\mathsf{A_2}$; this is no difficulty since our classification does not descend to the subtypes within Type $\mathsf{A}$.

Finally, we observe from~\eqref{intersections} that there is a single point of intersection between $\mathbf{XY}\!_{1,1}(\mathsf{A_3})$ and $\mathbf{XY}\!_{1,1}(\mathsf{B_3})$; we choose to retain this point $(t,u) = (1/5, 1/10)$ in Type $\mathsf{A}$, and so we will remember to delete it from Type~$\mathsf{B}$.

\textbf{Type $\mathsf{B}$.}
There are six difference tables we labeled as Type $\mathsf{B}$:

\begin{longtable}{|c|c|c|c|c|}
\hline
    $\Delta$ & $(p,q)$ & Parameter & $(\mathbf{x}, \mathbf{y}) \in \XY(\Delta)$ & For all $t$: \\ \hline

    $\mathsf{B_1}$ & $(0,2)$ & $t = x_1$ & \vpad{ \left( \myvec{t \\ 1/5 \\ 2/5 \\ 2/5 + t}_{\textstyle ,} \myvec{2/5 - t \\ 2/5 \\ 3/5 - t \\ 3/5} \right) } &
    $0 < t < 1/10$ \\ \hline

    $\mathsf{B_2}$ & $(1,1)$ & $t = x_1$ & \vpad{\left( \myvec{t \\ 1/5 \\ 1/5 + t \\ 3/5}_{\textstyle ,} \myvec{t \\ 1/5 \\ 2/5 \\ 2/5 + t} \right)} & $1/10 < t < 1/5$ \\ \hline

    $\mathsf{B_3}$ & $(1,1)$ & $t = 2/5 - x_1$ & \vpad{\left( \myvec{2/5 - t \\ 3/5 - t \\ 2/5 \\ 3/5}_{\textstyle ,} \myvec{1/5 \\ t \\ 2/5 \\ 2/5 + t} \right)} & $1/5 < t \leq 3/10$ \\ \hline

    $\mathsf{B_4}$ & $(2,2)$ & $t = x_2$ & \vpad{\left( \myvec{1/5 \\ t \\ 1/5 + t \\ 3/5}_{\textstyle ,} \myvec{3/5 - t \\ 4/5 - t \\ 3/5 \\ 1-t} \right) }  & $3/10 < t < 2/5$ \\ \hline

    $\mathsf{B_5}$ & $(2,2)$ & $t = 3/5 - x_1$ & \vpad{\left( \myvec{3/5 - t \\ 4/5 - t \\ 1-t \\ 3/5}_{\textstyle ,} \myvec{1/5 \\ t \\ 3/5 \\ 1/5 + t} \right)} & $2/5 < t < 1/2$ \\ \hline

    $\mathsf{B_6}$ & $(1,1)$ & -- & \vpad{\left( \myvec{1/10 \\ 3/10 \\ 5/10 \\ 6/10}_{\textstyle ,} \myvec{3/10 \\ 4/10 \\ 5/10 \\ 8/10} \right)} & -- \\ \hline
\end{longtable}

\noindent
Using the $(\mathbf{x}, \mathbf{y})$ column, we verify for each $\Delta$ that $[X]$ and $[Y]$ are the two bracelets given in the proposition (reproduced in the top row of the following table):
\[
\begin{array}{|c|r|r|}
\hline
 & \vpad{
\left[ 0, \; t, \; \frac{1}{5}, \; \frac{2}{5}, \; \frac{2}{5} + t \right]} & \left[ 0, \; t, \; \frac{1}{5}, \; \frac{1}{5} + t, \; \frac{3}{5} \right] \\ \hline
\vpad{\mathsf{B_1}} & [X] & \left[ \left\{ \frac{3}{5} - y : y \in Y \right\} \right] \\ \hline
\vpad{\mathsf{B_2}} & [Y] & [X] \\ \hline 
\vpad{\mathsf{B_3}} & [Y] & \left[ \left\{ \frac{3}{5} - x : x \in X \right\} \right] \\ \hline
\vpad{\mathsf{B_4}} & \left[ \left\{ \left(\frac{2}{5} + t \right) + y : y \in Y \right\} \right] & [X] \\ \hline
\vpad{\mathsf{B_5}} & \left[ \left\{ \left(\frac{2}{5} + t \right) + x : x \in X \right\} \right] & [Y] \\ \hline
\mathsf{B_6} & \vpad{\begin{array}{r}
    \text{Upon setting $t = \frac{1}{2}$:}\\[1ex]
    \left[ \left\{ \frac{1}{2} - x : x \in X \right\} \right]
\end{array}} & \begin{array}{r}
    \text{Upon setting $t = \frac{1}{2}$:}\\[1ex]
    \left[ \left\{ 1 - y : y \in Y \right\} \right]
\end{array} \\ \hline
\end{array}
\]
Having included $t = 1/2$ to account for $\mathsf{B}_6$, we see that the union of the $t$-intervals listed in the first table above agrees with the the $t$-values given in the proposition, except for the presence of the point $t = 3/10$.
But for $t = 3/10$, we observe that
\[
(\mathbf{x}, \mathbf{y}) = \left( \myvec{1/10 \\ 3/10 \\ 4/10 \\ 6/10}_{\textstyle ,} \myvec{2/10 \\ 3/10 \\ 4/10 \\ 7/10} \right)
\]
was the point of intersection between $\mathbf{XY}\!_{1,1}(\mathsf{A_3})$ and $\mathbf{XY}\!_{1,1}(\mathsf{B_3})$ given in~\eqref{intersections}.
Since we retained this point in Type~$\mathsf{A}$ above, we delete it here from Type~$\mathsf{B}$, and therefore $t=3/10$ has been removed from the set of $t$-values in the proposition.

\textbf{Type $\mathsf{C}$.} 
There is only one difference table we labeled as Type $\mathsf{C}$:

   \begin{longtable}{|c|c|c|c|c|}
\hline
    $\Delta$ & $(p,q)$ & Parameter & $(\mathbf{x}, \mathbf{y}) \in \XY(\Delta)$ & For all $t$: \\ \hline

    $\mathsf{C}$ & $(1,2)$ & $t = x_1 - 1/6$ & \vpad{\left( \myvec{1/6 + t \\ 1/3 \\ 1/3 + t \\ 2/3}_{\textstyle ,} \myvec{1/3 \\ 1/3 + t \\ 1/2 + t \\ 2/3} \right)} & $0 < t \leq 1/12$ \\ \hline

\end{longtable}

\noindent 
From this, we see that $X$ and $Y$ are precisely the representatives
\[
\textstyle \left\{0, \; \frac{1}{6} + t, \; \frac{1}{3}, \; \frac{1}{3} +t, \; \frac{2}{3} \right\} \text{ and } \left \{0, \; \frac{1}{3}, \; \frac{1}{3} + t, \; \frac{1}{2} + t, \; \frac{2}{3} \right\}
\]
given in the proposition, and the $t$-values also match those given in the proposition.
By~\eqref{intersections}, there is a single point of intersection between $\mathbf{XY}\!_{1,2}(\mathsf{C})$ and $\mathbf{XY}\!_{1,2}(\mathsf{D_4})$; we choose to retain this point $t=1/12$ in Type $\mathsf{C}$, and so we will remember to delete it from Type $\mathsf{D}$.

\textbf{Type $\mathsf{D}$.}
There are four difference tables we labeled as Type $\mathsf{D}$:

\begin{longtable}{|c|c|c|c|c|}
\hline
     $\Delta$ & $(p,q)$ & Parameter & $(\mathbf{x}, \mathbf{y}) \in \XY(\Delta)$ & For all $t$: \\ \hline

$\mathsf{D_1}$ & $(0,1)$ & $t = 1/3 - x_2$ & \vpad{ \left( \myvec{1/6 - 2t \\ 1/3 - t \\ 1/3 \\ 1/2 - t}_{\textstyle ,} \myvec{1/6 \\ 1/6 + t \\ 1/3 - t \\ 1/2 + t} \right) } & $0 < t < 1/12$ \\ \hline

$\mathsf{D_2}$ & $(0,1)$ & $t = x_2 - 1/6$ & \vpad{ \left( \myvec{2t - 1/6\\1/6 + t\\1/6 + 2t\\1/3 + t}_{\textstyle ,} \myvec{1/6 \\ 1/3 - t \\ 1/6 + t \\ 1/2 + t} \right) } & $1/12 < t < 1/9$  \\ \hline

$\mathsf{D_3}$ & $(0,1)$ & $t = 1/6 - x_1$ & \vpad{\left( \myvec{1/6 - t \\ 1/6 \\ 1/2 - t \\ 1/3 + t}_{\textstyle ,} \myvec{1/6 \\ 1/3 - t \\ 1/6 + t \\ 1/2 + t} \right)}  & $1/9 \leq t < 1/6$\\ \hline

$\mathsf{D_4}$ & $(1,2)$ & $t = x_1$ & \vpad{\left( \myvec{t \\ 2t - 1/6 \\ 1/6 + t \\ 2/3}_{\textstyle ,} \myvec{2t - 1/6 \\ 1/6 + t \\ 1/3 + t \\ 1/6 + 2t} \right)} & $1/6 < t \leq 1/4$ \\ \hline

\end{longtable}
\noindent Using the $(\mathbf{x}, \mathbf{y})$ column, we verify for each $\Delta$ that $[X]$ and $[Y]$ are the bracelets given in the proposition:
\[
\begin{array}{|c|r|r|}
\hline
     & \vpad{\left[ t, \; \frac{1}{6} - t, \; \frac{1}{3}, \; \frac{1}{3} + t, \; \frac{1}{2} \right]} & \left[ 0, \; t, \; \frac{1}{6} - t, \; \frac{1}{3} + t, \; \frac{5}{6} \right] \\ \hline
   \vpad{\mathsf{D_1}}  & \left[ \left\{t + x : x \in X \right\} \right] & \left[ \left\{\frac{5}{6} + y : y \in Y \right\} \right] \\ \hline
   \vpad{\mathsf{D_2}} & \left[ \left\{\left(\frac{1}{6} -t \right) + x : x \in X \right\} \right] & \left[ \left\{\frac{5}{6} + y : y \in Y \right\} \right] \\ \hline
   \vpad{\mathsf{D_3}} & \left[ \left\{\frac{1}{2} - x : x \in X \right\} \right] & \left[ \left\{\frac{5}{6} + y : y \in Y \right\} \right] \\ \hline
   \vpad{\mathsf{D_4}} & \left[ \left\{\left(\frac{1}{6} - t \right) + y : y \in Y \right\} \right] & \left[ \left\{t - x : x \in X \right\} \right] \\ \hline
\end{array}
\]

\noindent The $t$-values in the first table above agree with those in the proposition, except for the presence of $t = 1/4$.
But for $t = 1/4$, we observe that
\[
(\mathbf{x}, \mathbf{y}) = \left( \myvec{3/12 \\ 4/12 \\ 5/12 \\ 8/12}_{\textstyle ,} \myvec{4/12 \\ 5/12 \\ 7/12 \\ 8/12} \right)
\]
was the point of intersection between $\mathbf{XY}\!_{1,2}(\mathsf{C})$ and $\mathbf{XY}\!_{1,2}(\mathsf{D_4})$ given in~\eqref{intersections}.
Since we retained this point in Type~$\mathsf{C}$ above, we delete it here from Type~$\mathsf{D}$, and therefore $t = 1/4$ has been removed from the set of $t$-values in the proposition.

\textbf{Type $\mathsf{E}$.}
As we discussed in the proof for Type $\mathsf{A}$ above, the homometry classes of Type $\mathsf{E}$ have cardinality 3, and correspond to the one-parameter intersection given in the top row of~\eqref{intersections}.
Upon setting $t = x_1$ in the top row of~\eqref{intersections}, it is immediately clear that the resulting triple $(\mathbf{x}, \mathbf{y}, \mathbf{y'})$ gives the canonical vectors of the three bracelets in Type~$\mathsf{E}$, since the vectors are obtained from the bracelet representatives by removing the 0.
The $t$-values obtained from~\eqref{intersections} are $0 < t \leq 1/12$; however, as mentioned in the proof for Type~$\mathsf{A}$, the value $t=1/12$ gives a pair in Type~$\mathsf{A}$ rather than a triple in Type~$\mathsf{E}$.

\textbf{Type $\mathsf{F}$.}
There are four difference tables we labeled as Type~$\mathsf{F}$:

\begin{longtable}{|c|c|c|c|c|}
\hline
     $\Delta$ & $(p,q)$ & Parameter &  $(\mathbf{x}, \mathbf{y}) \in \XY(\Delta)$ & For all $t$:\\ \hline

$\mathsf{F_1}$ & $(0,1)$ & $t = x_1$ & \vpad{\left( \myvec{t \\ 1/8 \\ 1/4 + t \\ 1/2}_{\textstyle ,} \myvec{t \\ 1/8 + t \\ 1/4 \\ 1/2 + t} \right)} & $0 < t < 1/8$ \\ \hline

$\mathsf{F}_2$ & $(0,1)$ & $t = 1/4 - x_1$ & \vpad{\left( \myvec{1/4 - t \\ 1/2- t \\ 3/8 \\ 1/2}_{\textstyle ,} \myvec{t \\ 1/4 \\ 1/8 + t \\ 1/2 + t} \right)} & $1/8 < t < 1/4$ \\ \hline

$\mathsf{F_3}$ & $(1,1)$ & $t = 1/4 + x_1$ & \vpad{\left( \myvec{t - 1/4 \\ 1/4 \\ 1/8 + t \\ 1/4 + t}_{\textstyle ,} \myvec{1/8 \\ t - 1/8 \\ t + 1/8 \\ 5/8} \right)} & $1/4 < t < 3/8$ \\ \hline

$\mathsf{F_4}$ & $(2,2)$ & $t = 1/8 + x_2$  & \vpad{\left( \myvec{1/8 \\ t - 1/8 \\ t + 1/8 \\ 5/8}_{\textstyle ,} \myvec{t - 1/4 \\ 1/4 \\ 1/8 + t \\ 1/4 + t} \right)} & $3/8 < t < 1/2$ \\ \hline

\end{longtable}

\noindent Using the $(\mathbf{x}, \mathbf{y})$ column, we verify for each $\Delta$ that $[X]$ and $[Y]$ are the bracelets given in the proposition:
\[
\begin{array}{|c|r|r|}
\hline
     & \vpad{\left[ 0, \; t, \; \frac{1}{8}, \; \frac{1}{4} + t, \; \frac{1}{2} \right]} & \left[ 0, \; t, \; \frac{1}{8} + t, \; \frac{1}{4}, \; \frac{1}{2} + t \right] \\ \hline
   \vpad{\mathsf{F_1}}  & [X] & [Y] \\ \hline
   \vpad{\mathsf{F_2}} & \left[ \left\{\frac{1}{2} - x : x \in X \right\} \right] & [Y] \\ \hline
   \vpad{\mathsf{F_3}} & \left[ \left\{ \left(\frac{1}{4} + t \right) - x : x \in X \right\} \right] & \left[ \left\{\left(\frac{1}{8} + t \right) - y : y \in Y \right\} \right] \\ \hline
   \vpad{\mathsf{F_4}} & \left[ \left\{\left(\frac{1}{4} + t \right) - y : y \in Y \right\} \right] & \left[ \left\{ \left(\frac{1}{8} + t \right) -  x : x \in X \right\} \right] \\ \hline
\end{array}
\]

\noindent The $t$-values given in the proposition are immediate from the first table above.

\textbf{Type $\mathsf{G}$.}
There are four remaining difference tables in $\bigcup_{(p,q)} \M(p,q)$:

\begin{center}
\begin{tabular}{|c|c|c|}
\hline
     $\Delta$ & $(p,q)$ & $\XY(\Delta)$ \\ \hline

$\mathsf{G_1}$ &  $(0,1)$ & \vpad{\left\{ \left( \myvec{1/20 \\ 2/20 \\ 6/20 \\ 9/20}_{\textstyle ,} \myvec{3/20 \\ 4/20 \\ 5/20 \\ 11/20} \right) \right\}} \\ \hline

$\mathsf{G_2}$ & $(0,1)$ & \vpad{\left\{ \left( \myvec{1/20 \\ 4/20 \\ 7/20 \\ 9/20}_{\textstyle ,} \myvec{3/20 \\ 4/20 \\ 6/20 \\ 11/20} \right) \right\}} \\ \hline

$\mathsf{G_3}$ & $(1,2)$ & \vpad{\left\{ \left( \myvec{4/20 \\ 5/20 \\ 7/20 \\ 13/20}_{\textstyle ,} \myvec{7/20 \\ 8/20 \\ 11/20 \\ 13/20} \right) \right\}} \\ \hline

$\mathsf{G_4}$ & $(1,2)$ & \vpad{\left\{ \left( \myvec{2/20 \\ 7/20 \\ 8/20 \\ 11/20}_{\textstyle ,} \myvec{5/20 \\ 9/20 \\ 11/20 \\ 12/20} \right) \right\}} \\ \hline

\end{tabular}
\end{center}

\noindent Each difference table above has a solution set consisting of a single point $(\mathbf{x}, \mathbf{y})$, giving the canonical vectors of one of the four bracelet pairs given in Type $\mathsf{G}$ in the proposition.
\end{proof}

\begin{rem}
    \label{rem: type G parameter}
    The four pairs in Type $\mathsf{G}$ are unified via a single discrete parameter $t$ as follows:
    \begin{equation}
        \label{Type G parameter}
    \textstyle \left\{ \left[ 0, \; \frac{5}{20}, \; \frac{8t}{20}, \; \frac{15+4t}{20}, \; \frac{5-4t}{20} \right], \; \left[ 0, \; \frac{5}{20}, \; \frac{4t}{20}, \; \frac{15+8t}{20}, \; \frac{15-4t}{20} \right] \right\}, \quad \text{for $t = 1,2,3,4$}.
    \end{equation}
    In particular, with $X$ and $Y$ as given in the Type $\mathsf{G}$ table above, we have the following:
    \[
\begin{array}{|c|c|r|r|}
\hline
     & t & \vpad{\left[ 0, \; \frac{5}{20}, \; \frac{8t}{20}, \; \frac{15+4t}{20}, \; \frac{5-4t}{20} \right]} & \left[ 0, \; \frac{5}{20}, \; \frac{4t}{20}, \; \frac{15+8t}{20}, \; \frac{15-4t}{20} \right] \\ \hline
   \vpad{\mathsf{G_1}}  & 1 & \left[ \left\{ x - \frac{1}{20} : x \in X \right\} \right] & [Y] \\ \hline
   \vpad{\mathsf{G_2}} & 2 & \left[ \left\{ x - \frac{4}{20} : x \in X \right\} \right] & \left[ \left\{ \frac{11}{20} - y : y \in Y \right\} \right] \\ \hline
   \vpad{\mathsf{G_3}} & 3 & [X] & \left[ \left\{ y - \frac{8}{20} : y \in Y  \right\} \right] \\ \hline
   \vpad{\mathsf{G_4}} & 4 & [Y] & \left[ \left\{ \frac{7}{20} - x : X \in X \right\} \right] \\ \hline
\end{array}
\]
\end{rem}

\section{Homometry classes of binary bracelets with weight $k=5$}

\label{sec:main results}

Having classified the nontrivial homometry classes of 5-point bracelets in Proposition~\ref{prop:classification cts}, we will now translate the result from the continuous setting ($\mathbb{T}$) into the discrete setting ($\Z_n$).
This section contains the main results of the paper --- a classification theorem and an enumeration theorem --- which together constitute a solution to Problem~\ref{prob:main} in the case $k=5$.
In place of the 5-point bracelets from the previous sections, we now turn to binary bracelets with 5 black beads, in the typical sense of the combinatorics literature.

\subsection*{Restatement of the problem for $\Z_n$}

We briefly formalize the nontechnical description of Problem~\ref{prob:main} given in the introduction.
The relevant definitions are completely analogous to those given in the continuous setting of Section~\ref{sec:5-point bracelets}.

Let $n$ be a positive integer, and let $\Z_n \coloneqq \{0,1,\ldots,n-1\}$ denote the cyclic group under addition modulo~$n$.
There is a natural action on $\Z_n$ by the dihedral group $D_{2n}$, realized by arranging the elements of $\Z_n$ in order at the vertices of a regular $n$-gon.
For our purposes, we define a \emph{binary bracelet of length $n$} to be the $D_{2n}$-orbit of some subset $S = \{s_1, \ldots, s_k\} \subseteq \Z_n$.
The cardinality $k$ is called the \emph{weight} of the bracelet.
We depict this bracelet, denoted by $[S] = [s_1, \ldots, s_k]$, as a circular arrangement of $n$ beads, where the $k$ many black beads correspond to the elements of $S$.
The cyclic (i.e., Lee) distance between two elements $a,b \in \Z_n$ is given by 
\begin{equation}
    \label{distance Zn}
    d(a,b) = d_{\Z_n}(a,b) \coloneqq \min\{|a-b|, \: n-|a-b| \}.
\end{equation}
Since the action of $D_{2n}$ preserves distances, it makes sense to define the \emph{distance multiset} of a binary bracelet to be the multiset of $\binom{k}{2}$ many pairwise distances between the black beads of the bracelet.
Two binary bracelets of the same length are said to be \emph{homometric} if they have the same distance multiset.
Recall the diagram~\eqref{example intro} for an example of two homometric bracelets $[0,6,7,9,11]$ and $[0, 5, 8, 10, 11]$ of length $n=12$ and weight $k=5$.
The homometry relation is clearly an equivalence relation on the set of all binary bracelets of a given length $n$.
Thus it makes sense to adopt Definition~\ref{def: homometry class cts} in this discrete setting: a \emph{homometry class} is an equivalence class with respect to the homometry relation, defined on the set of all binary bracelets of a given length.
As before, we say that a homometry class is \emph{nontrivial} if it has cardinality greater than 1.

\begin{thm}
    \label{thm:discrete}
    Let $n$ be a positive integer, and consider the set of all binary bracelets with length $n$ and weight 5.
    Every nontrivial homometry class belongs to exactly one of the following seven types~$\mathsf{A}$--$\mathsf{G}$, where $m$ is a positive integer, and $i$ and $j$ are positive integers ranging over the index set $\I_m( \;\;)$:

    \normalfont

    \[
    \begin{array}{ll}
        \text{Type $\mathsf{A}$ (for $n = 2m$):} & \Big\{ \left[0, i, i+j, m + i - j, m \right], \;  \left[0, m+i, i+j, m + i - j, m \right] \Big\}, \\[2ex]
        & \I_m({\mathsf{A}}) = \left\{(i,j) : 1 \leq i,j < \tfrac{m}{2} \text{ and } i \neq j, \text{ and if } j = \tfrac{m}{3} \text{ then } i = \tfrac{m}{6} \right\}. \\[3ex]

        \text{Type $\mathsf{B}$ (for $n = 5m$):} & \Big\{ \left[0, i, m, 2m, 2m+i \right], \;  \left[0, i, m, m+i, 3m \right] \Big\},\\[2ex]
        & \I_m({\mathsf{B}}) = \left\{i : 1 \leq i \leq \frac{5m}{2} \right\} \setminus \left\{ \frac{m}{2}, m, \frac{3m}{2}, 2m \right\}. \\[3ex]

        \text{Type $\mathsf{C}$ (for $n = 6m$):} & \Big\{ \left[0, m + i, 2m, 2m + i, 4m \right], \;  \left[0, 2m, 2m+i, 3m + i, 4m \right] \Big\}, \\[2ex]
        & \I_m(\mathsf{C}) = \left\{ i : 1 \leq i \leq \frac{m}{2} \right\}. \\[3ex]

        \text{Type $\mathsf{D}$ (for $n = 6m$):} & \Big\{ \left[i, m - i, 2m, 2m + i, 3m \right], \;  \left[0, i, m - i, 2m + i, 5m \right] \Big\}, \\[2ex]
        & \I_m(\mathsf{D}) = \left\{ i : 1 \leq i < \frac{3m}{2} \right\} \setminus \left\{ \frac{m}{2}, m \right\}. \\[2ex]

        \text{Type $\mathsf{E}$ (for $n = 6m$):} & \Big\{ \left[0, i, m+i, 2m+i, 3m \right], \;  \left[0, i, m, 2m, 3m+i \right], \; \left[0, m-i, m, 2m, 4m-i \right] \Big\}, \\[2ex]
        & \I_m(\mathsf{E}) = \left\{ i: 1 \leq i < \frac{m}{2} \right\}. \\[3ex]

        \text{Type $\mathsf{F}$ (for $n = 8m$):} & \Big\{ \left[0, i, m, 2m + i, 4m \right], \;  \left[0, i, m + i, 2m, 4m+i \right] \Big\}, \\[2ex]
        & \I_m(\mathsf{F}) = \left\{ i : 1 \leq i < 4m \right\} \setminus \left\{ m, 2m, 3m \right\}. \\[3ex]

        \text{Type $\mathsf{G}$ (for $n = 20m$):} & \Big\{ \left[0, 5m, 8i, 15m + 4i, 5m - 4i \right], \; \left[0, 5m, 4i, 15m + 8i, 15m - 4i \right] \Big\}, \\[2ex]
        & \I_m(\mathsf{G}) = \left\{ i : 1 \leq i \leq 4 \right\}.
    \end{array}
    \]
\end{thm}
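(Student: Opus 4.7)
The plan is to deduce Theorem~\ref{thm:discrete} from Proposition~\ref{prop:classification cts} via the natural embedding $\phi \colon \Z_n \hookrightarrow \mathbb{T}$ given by $a \mapsto a/n$. This map scales cyclic distance by a factor of $1/n$, so two subsets of $\Z_n$ are homometric if and only if their $\phi$-images are homometric 5-point subsets of $\mathbb{T}$. Since the $D_{2n}$-action on $\Z_n$ corresponds, under $\phi$, to the action on $\phi(\Z_n) = \{0, \tfrac{1}{n}, \ldots, \tfrac{n-1}{n}\}$ of the (finite) subgroup of $\mathrm{O}(2)$ generated by rotations by multiples of $1/n$ and reflections, one verifies that binary bracelets of length $n$ with $5$ black beads correspond bijectively to 5-point bracelets in $\mathbb{T}$ admitting a representative contained in $\phi(\Z_n)$, with homometry preserved. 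Consequently, the nontrivial homometry classes described by Theorem~\ref{thm:discrete} are precisely those classes from Proposition~\ref{prop:classification cts} all of whose bracelets have such a representative.

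The proof then proceeds type by type. For each continuous Type~$\mathsf{A}$--$\mathsf{G}$, I read off the divisibility condition on $n$ forced by the fixed rational coordinates appearing in the parametrization: the $\tfrac{1}{2}$ in Type~$\mathsf{A}$ forces $n = 2m$, the $\tfrac{1}{5}$ in Type~$\mathsf{B}$ forces $n = 5m$, the $\tfrac{1}{6}$ in Types~$\mathsf{C}$, $\mathsf{D}$, $\mathsf{E}$ forces $n = 6m$, the $\tfrac{1}{8}$ in Type~$\mathsf{F}$ forces $n = 8m$, and the combined denominators $4$ and $5$ in Type~$\mathsf{G}$ force $n = 20m$. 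Under the substitutions $t = i/n$ (and $u = j/n$ in Type~$\mathsf{A}$), the open intervals and exclusions for $t$ and $u$ in Proposition~\ref{prop:classification cts} translate directly into the index sets $\I_m(\cdot)$ declared in the theorem, and multiplying each continuous coordinate by $n$ reproduces the discrete bracelet formulas. For Type~$\mathsf{G}$, I invoke the unified parametrization from Remark~\ref{rem: type G parameter}, with $t \in \{1,2,3,4\}$, to collapse the four fixed continuous bracelet pairs into a single discrete formula indexed by $i$.

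The most delicate step will be ensuring that the boundary conditions and excluded special parameter values from the continuous proposition translate correctly into the discrete index sets, without loss or duplication. In particular, the three overlap points identified in~\eqref{intersections}, which the proof of Proposition~\ref{prop:classification cts} had to assign to specific types in order to keep the classification disjoint, must be re-allocated consistently in the discrete setting; and one must verify that no additional bead coincidences can arise in $\Z_n$ beyond those already excluded in $\mathbb{T}$ by the strict inequalities defining the canonical vectors. Once these verifications are carried out for all seven types, Proposition~\ref{prop:classification cts} yields immediately that every nontrivial homometry class of binary bracelets of length $n$ with $5$ black beads belongs to exactly one of the seven types listed, completing the proof.
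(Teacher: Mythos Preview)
Your proposal is correct and follows essentially the same approach as the paper: both arguments pass between $\Z_n$ and $\mathbb{T}$ via the obvious scaling bijection, read off the divisibility condition on $n$ from the fixed denominators in each type of Proposition~\ref{prop:classification cts}, and then substitute $i = nt$, $j = nu$, $m = n/\ell$ to obtain the discrete bracelet formulas and index sets (with Type~$\mathsf{G}$ handled via Remark~\ref{rem: type G parameter}). One small simplification: the overlap points from~\eqref{intersections} were already allocated to specific types in the statement and proof of Proposition~\ref{prop:classification cts}, so no additional re-allocation is needed here---the discrete index sets follow mechanically from the continuous ones.
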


\begin{rem}
    Earlier in Figure~\ref{fig:k5 classification} we depicted the diagrams corresponding to each type in the classification.
    The parameters $i$ and $j$ in the diagrams coincide with those given in the theorem above; 
    in each diagram, we draw dashes dividing the circle into equal arcs, with each arc representing $m$ many beads in a bracelet.
\end{rem}

\begin{proof}[Proof of Theorem~\ref{thm:discrete}]
    Consider the map
    \begin{align}
\label{bijection cts to discrete}
    \begin{split}
    \left\{ \begin{array}{c}
            \textup{5-point bracelets} \\
            \textup{with canonical vector $\mathbf{x}$} \\
            \textup{such that $n \mathbf{x} \in \Z^4$}              
        \end{array} \right\} & \longrightarrow \left\{ \begin{array}{c}
            \textup{binary bracelets} \\
            \textup{with length $n$} \\
            \textup{and weight $5$}
            \end{array} \right\}, \\[1ex]
            [0, x_1, x_2, x_3, x_4] & \longmapsto [0, nx_1, nx_2, nx_3, nx_4].
            \end{split}
\end{align}
    The map is well-defined, since both the ${\rm O}(2)$-action on $\mathbb{T}$ and the $D_{2n}$-action on $\Z_n$ are given by rotations and reflections, which commute with the scalar multiplication by $n$.
    In fact,~\eqref{bijection cts to discrete} is a bijection, since its inverse is given by taking any representative of a binary bracelet and dividing its elements by $n$.
    Comparing~\eqref{distance T} and~\eqref{distance Zn}, we observe that the distance $d_{\Z_n}$ is obtained from the distance $d_{\mathbb{T}}$ via multiplication by $n$;
    consequently, the bijection~\eqref{bijection cts to discrete} preserves the homometry relation.
    Therefore,~\eqref{bijection cts to discrete} descends to a bijection between the homometry classes in its domain and codomain.
    
    Thus in order to write down the nontrivial homometry classes in the codomain of~\eqref{bijection cts to discrete}, we simply need to determine those in Proposition~\ref{prop:classification cts} whose canonical vectors $\mathbf{x}$ are such that $n \mathbf{x}$ has integer coordinates.
    This is especially straightforward because in each homometry class given in Proposition~\ref{prop:classification cts}, there is a 5-point bracelet of the form $[0, z_2, z_3, z_4, z_5]$ given by a representative containing 0.
    Therefore, its canonical vector $\mathbf{x}$ is such that $n \mathbf{x} \in \Z^4$ if and only if each $nz_i \in \Z$.
    Since all $z_i$'s are rational, this occurs if and only if 
        \begin{itemize}
            \item $n = \ell m$ for some positive integer $m$, where $\ell$ is the least common denominator of the $z_i$'s (upon setting $t=u=0$); and
            \item $nt$ and $nu$ are integers.
        \end{itemize}
    By inspecting the 5-point bracelet representatives in Proposition~\ref{prop:classification cts}, we immediately determine the least common denominator $\ell$ as follows:
    \begin{equation}
        \label{l}
    \begin{array}{|c|c|c|c|c|c|c|c|}
    \hline
       \text{Type} & \mathsf{A} & \mathsf{B} & \mathsf{C} & \mathsf{D} & \mathsf{E} & \mathsf{F} & \mathsf{G} \\ \hline
        \ell & 2 & 5 & 6 & 6 & 6 & 8 & 20 \\ \hline
    \end{array}
    \end{equation}
    Thus in order to translate from Proposition~\ref{prop:classification cts} to Theorem~\ref{thm:discrete}, we take each 5-point bracelet representative, multiply its elements by $n$, and then express them in terms of the positive integers $m$, $i$, and $j$ using the substitutions
    \[
        m = \frac{n}{\ell}, \qquad i = nt, \qquad j = nu.
    \]
    The index sets $\I_m(\;\;)$ are obtained in the same way from the $t$- and $u$-values given in Proposition~\ref{prop:classification cts}.
    Note that for Type~$\mathsf{G}$, we use the discrete parameter $t \in \{1,2,3,4\}$ given in~\eqref{Type G parameter}.
\end{proof}

We turn to the second part of Problem~\ref{prob:main}, namely that of enumeration.
The goal is to understand the sequence $(h_n)_{n=1}^\infty$ of nonnegative integers, where
\[
    h_n \coloneqq \# \left\{ \begin{array}{c}
    \text{nontrivial homometry classes of binary bracelets} \\
    \text{with length $n$ and weight 5}
    \end{array}
\right\}.
\]
A fundamental goal in enumerative combinatorics, when studying an unknown sequence, is to find a closed form for its generating function.
In particular, the \emph{generating function} of the sequence $(h_n)$ is the formal power series
\[
H(x) \coloneqq \sum_{n=1}^\infty h_n \, x^n
\]
whose coefficients give the terms of the sequence.
If we can find a closed form (such as a rational expression) for $H(x)$, then we automatically know the entire sequence $(h_n)$ via expanding the Maclaurin series of $H(x)$, ignoring any issues of convergence.

In the following lemma, we determine the generating functions for the sequences obtained by restricting the count $h_n$ to each of the seven types in our classification.
This amounts to describing the cardinalities of the index sets $\I_m(\;\;)$ in Theorem~\ref{thm:discrete}.
In the proof, we will make use of the following well-known closed forms:
\begin{alignat}{2}
   \sum_{m=1}^\infty y^m &= \frac{y}{(1-y)},  \qquad \qquad \qquad & \sum_{m=1}^\infty \lfloor m/2 \rfloor y^m &= \frac{y^2}{(1-y)(1-y^2)}, \nonumber \\[1ex]
   \sum_{m=1}^\infty m \, y^m & = \frac{y}{(1-y)^2},    & \sum_{m=1}^\infty \lfloor (m-1)/2 \rfloor y^m &= \frac{y^3}{(1-y)(1-y^2)}, \label{gf IDs} \\[1ex]
   \sum_{m=1}^\infty (m-1) y^m &=  \frac{y^2}{(1-y)^2}, & \sum_{m=1}^\infty \lfloor (m-1)/2 \rfloor^2 \, y^m &= \frac{y^3(1+y^2)}{(1-y) (1-y^2)^2}. \nonumber
\end{alignat}

\begin{lemma}
\label{lemma: gfs individual}
    For each type $\bullet \in \{\mathsf{A}, \ldots, \mathsf{G}\}$ in Theorem~\ref{thm:discrete}, let
    \begin{align*}
    h_n(\bullet) &\coloneqq \# \left\{ \begin{array}{c}
    \textup{nontrivial homometry classes of binary bracelets} \\
    \textup{with length $n$ and weight 5, of Type $\bullet$}
    \end{array}  \right\}, \\[1ex]
    H(\bullet; x) & \coloneqq \sum_{n=1}^\infty h_n(\bullet) \, x^n.
    \end{align*}
    We have
    \begingroup
    \arraycolsep=5ex
    \[
    \begin{array}{ll}
        H(\mathsf{A}; x)  = \dfrac{2x^{10}}{(1-x^2)(1-x^4)^2} - \dfrac{3x^{18}}{(1-x^6)(1-x^{12})}, & H(\mathsf{E};x) = \dfrac{x^{18}}{(1-x^6)(1-x^{12})}, \\[4ex]
        H(\mathsf{B}; x) = \dfrac{x^{10} + 4x^{15}}{(1-x^5)(1-x^{10})}, & H(\mathsf{F}; x) = \dfrac{4x^{16}}{(1-x^8)^2},\\[4ex]
        H(\mathsf{C}; x) = \dfrac{x^{12}}{(1-x^6)(1-x^{12})}, & H(\mathsf{G}; x) = \dfrac{4x^{20}}{1-x^{20}}. \\[4ex]
        H(\mathsf{D}; x) =  \dfrac{3x^{18}}{(1-x^6)(1-x^{12})}, & 
        \end{array}
    \]
    \endgroup
    
\end{lemma}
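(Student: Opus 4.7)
The plan is to observe that Theorem~\ref{thm:discrete} exhibits each nontrivial homometry class of Type~$\bullet$ as parameterized by a pair $(m, i)$ (or a triple $(m, i, j)$ for Type~$\mathsf{A}$) with $n = \ell m$ and $i, j$ ranging over $\I_m(\bullet)$, where $\ell$ is the type-specific period from~\eqref{l}. Since distinct parameter choices give distinct homometry classes of the same type, and the seven types partition the nontrivial homometry classes, we have
\[
H(\bullet; x) = \sum_{m \geq 1} |\I_m(\bullet)|\, x^{\ell m}.
\]
The task therefore reduces to computing the integer-valued functions $m \mapsto |\I_m(\bullet)|$ explicitly and recognizing each resulting series via the standard identities~\eqref{gf IDs} (specialized to $y = x^\ell$).

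For the single-parameter types $\mathsf{C}$, $\mathsf{E}$, $\mathsf{F}$, $\mathsf{G}$, the counts $|\I_m(\mathsf{C})| = \lfloor m/2 \rfloor$, $|\I_m(\mathsf{E})| = \lfloor (m-1)/2 \rfloor$, $|\I_m(\mathsf{F})| = 4(m-1)$, and $|\I_m(\mathsf{G})| = 4$ follow immediately since no exclusion value can fail to be an integer; direct substitution into~\eqref{gf IDs} then yields the four claimed closed forms. Types~$\mathsf{B}$ and~$\mathsf{D}$ require a short case split on the parity of $m$, because the candidate exclusions $m/2$ and $3m/2$ are integers only when $m$ is even. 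For Type~$\mathsf{D}$ this case analysis collapses uniformly to $|\I_m(\mathsf{D})| = 3 \lfloor (m-1)/2 \rfloor$. For Type~$\mathsf{B}$ one finds $|\I_m(\mathsf{B})| = 5(m-1)/2$ when $m$ is odd and $|\I_m(\mathsf{B})| = 5m/2 - 4$ when $m$ is even; summing the two subseries and clearing a common factor $(1+x^5)$ produces the asymmetric closed form $(x^{10} + 4x^{15})/((1-x^5)(1-x^{10}))$ stated in the lemma.

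The main obstacle is Type~$\mathsf{A}$, because it is two-parameter and the sub-condition ``if $j = m/3$ then $i = m/6$'' forces a case analysis on $m$ modulo~$6$. Writing $N_m \coloneqq \lfloor (m-1)/2 \rfloor$ for the number of integers in $[1, m/2)$, the pairs $(i,j)$ with $1 \leq i, j < m/2$ and $i \neq j$ number $N_m(N_m-1)$. The sub-condition removes a correction: it is vacuous if $3 \nmid m$; it kills all $N_m - 1$ pairs $(i, m/3)$ with $i \neq m/3$ when $m \equiv 3 \pmod 6$ (since $m/6$ is not an integer); and it kills only $N_m - 2$ such pairs when $m \equiv 0 \pmod 6$ (sparing the single allowed value $i = m/6$). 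Substituting $m = 6k+3$ and $m = 6k$ respectively, and using $\sum_{k \geq 1} k y^k = y/(1-y)^2$ and $\sum_{k \geq 1} (k-1) y^k = y^2/(1-y)^2$, packages the correction as $\frac{3x^{18}(1+x^6)}{(1-x^{12})^2}$, which simplifies to $\frac{3x^{18}}{(1-x^6)(1-x^{12})}$ using $1 - x^{12} = (1-x^6)(1+x^6)$. Meanwhile, the main term $\sum_{m \geq 1} N_m(N_m-1)\, x^{2m}$ reduces by subtracting the fourth identity in~\eqref{gf IDs} from the sixth (with $y = x^2$) to $\frac{2x^{10}}{(1-x^2)(1-x^4)^2}$. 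Subtracting the correction from the main term yields the stated $H(\mathsf{A}; x)$, and the hard part of the lemma is essentially the bookkeeping that ensures the two cases $m \equiv 3$ and $m \equiv 0 \pmod 6$ combine into exactly a single clean rational function.
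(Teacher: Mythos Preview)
Your proposal is correct and follows essentially the same approach as the paper: both reduce the problem to $H(\bullet;x)=\sum_{m\geq 1}|\I_m(\bullet)|\,x^{\ell m}$, compute $|\I_m(\bullet)|$ case by case, and invoke the identities~\eqref{gf IDs}. The only difference is bookkeeping in the harder cases: for Type~$\mathsf{A}$ the paper handles the sub-condition via a three-stage inclusion--exclusion (remove all $(i,m/3)$ when $3\mid m$, then add back $(m/6,m/3)$ when $6\mid m$), whereas you split directly on $m\bmod 6$; for Types~$\mathsf{B}$ and~$\mathsf{D}$ the paper writes a single floor-function formula with a parity correction term rather than two explicit subseries. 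These lead to the same rational functions.
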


\begin{proof}

    Let $\ell$ be the least common denominator given in~\eqref{l} for Type $\bullet$.
    By Theorem~\ref{thm:discrete}, we have $h_{\ell m}(\bullet) = |\I_m(\bullet)|$ for every positive integer $m$, and we have $h_n(\bullet) = 0$ if $n$ is not divisible by $\ell$.
    Therefore
    \begin{equation}
        \label{x to y}
        H(\bullet; x) = \sum_{n=1}^\infty h_n(\bullet) \, x^n  = \sum_{m=1}^\infty h_{\ell m}(\bullet) \, x^{\ell m} = \sum_{m=1}^\infty |\I_m(\bullet)| \, y^m,
    \end{equation}
    where we set $y \coloneqq x^\ell$ on the right-hand side.
    It therefore suffices to find the generating function of the sequence $(|\I_m(\bullet)|)_{m=1}^\infty$, from which we will recover $H(\bullet; x)$ by replacing $y$ with $x^\ell$.
    In each proof below, we adopt standard interval notation restricted to the integers.

    Type~$\mathsf{A}$ is the most complicated case.
    Following Theorem~\ref{thm:discrete}, the index set $\I_m(\mathsf{A})$ can be constructed in three stages.
    First take the ordered pairs $(i,j)$ in $[1,m/2) \times [1, m/2)$, of which there are $\lfloor (m-1)/2 \rfloor^2$ many; remove the diagonal pairs $(i,i)$, of which there are $\lfloor (m-1)/2 \rfloor$ many.
    Thus after this first stage, using the identities in~\eqref{gf IDs}, the generating function for the number of elements is given by
    \begin{align}
      & \phantom{=} \sum_{m=1}^\infty \lfloor (m-1)/2 \rfloor^2 \, y^m - \sum_{m=1}^\infty \lfloor (m-1)/2 \rfloor y^m \nonumber \\
      &= \frac{y^3(1+y^2)}{(1-y)(1-y^2)^2} - \frac{y^3}{(1-y)(1-y^2)} \nonumber \\ 
      &= \frac{2y^5}{(1-y)(1-y^2)^2}. \label{A first part}
    \end{align}
    As our second stage in constructing $\I_m(\mathsf{A})$, we remove all pairs $(i, m/3)$, of which there are $\lfloor (m-1)/2 \rfloor - 1$ many if $m = 3p$ for some positive integer $p$, and zero otherwise; the generating function for this number is
    \begin{align}
        \sum_{p = 1}^\infty \left( \lfloor (3p - 1)/2 \rfloor - 1 \right) y^{3p} & = \sum_{p=1}^\infty (p-1) (y^3)^p + \sum_{p=1}^\infty \lfloor (p-1)/2 \rfloor (y^3)^p \nonumber \\
        &= \frac{(y^3)^2}{(1-y^3)^2} + \frac{(y^3)^3}{(1-y^3)(1-(y^3)^2)} \nonumber \\
        &= \frac{y^6 + 2y^9}{(1-y^3)(1-y^6)}. \label{A second part}
    \end{align}
    As the third stage in constructing $\I_m(\mathsf{A})$, add back the pair $(m/6, m/3)$, of which there is 1 if $m = 6q$ for some positive integer $q$, and zero otherwise.
    This number has the generating function
    \begin{equation}
        \label{A third part}
        \sum_{q=1}^\infty y^{6q} = \frac{y^6}{1 - y^6}.
    \end{equation}
    Combining these three stages, we have
    \[
    \sum_{m=1}^\infty |\I_m(\mathsf{A})| y^m = \eqref{A first part} - \eqref{A second part} + \eqref{A third part} = \frac{2y^5}{(1-y)(1-y^2)^2} - \frac{3y^9}{(1-y^3)(1-y^6)}.
    \]
    The result for $H(\mathsf{A};x)$ follows from~\eqref{x to y}, upon setting $y = x^\ell = x^2$.
    
    Since $\I_m(\mathsf{B}) = [1, 5m/2] \setminus \{m/2, m, 3m/2, 2m\}$, we have
    $|\I_m(\mathsf{B})| = \lfloor 5m/2 \rfloor - 2 - 2(\text{$m$ mod 2})$.
    Thus, 
    \begin{align*}
        \sum_{m=1}^\infty |\I_m(\mathsf{B})| \, y^m &= \sum_{m=1}^\infty (\lfloor 5m/2 \rfloor - 2 ) y^m - 2\sum_{p = 1}^\infty y^{2p} \\
        &= \sum_{m=1}^\infty 2m y^m + \sum_{m=1}^\infty \lfloor m/2 \rfloor y^m - 2\sum_{m=1}^\infty y^m - 2 \sum_{p=1}^\infty (y^2)^p\\
        &= \frac{2y}{(1-y)^2} + \frac{y^2}{(1-y)(1-y^2)} - \frac{2y}{1-y} - \frac{2y^2}{1-y^2} \\
        &= \frac{y^2 + 4y^3}{(1-y)(1-y^2)},
    \end{align*}
and the result for $H(\mathsf{B};x)$ follows from~\eqref{x to y}, upon setting $y = x^\ell = x^5$.

Since $\I_m(\mathsf{C}) = [1, m/2]$, we have $|\I_m(\mathsf{C})| = \lfloor m/2 \rfloor$.
Therefore 
\[
\sum_{m=1}^\infty |\I_m(\mathsf{C})| y^m = \sum_{m=1}^\infty \lfloor m/2 \rfloor y^m = \frac{y^2}{(1-y)(1-y^2)},
\]
and the result for $H(\mathsf{C};x)$ follows from~\eqref{x to y}, upon setting $y = x^\ell = x^6$.

Since $\I_m(\mathsf{D}) = [1, 3m/2) \setminus \{m/2, m\}$, we have 
$|\I_m(\mathsf{D})| = \lfloor (3m-1)/2 \rfloor - 1 - (\text{$m$ mod 2})$.
Thus
\begin{align*}
    \sum_{m=1}^\infty |\I_m(\mathsf{D})| y^m & = \sum_{m=1}^\infty (\lfloor (3m-1)/2 \rfloor - 1) y^m  - \sum_{p=1}^\infty y^{2p}\\
    &= \sum_{m=1}^\infty (m-1) \, y^m + \sum_{m=1}^\infty \lfloor (m-1)/2 \rfloor y^m - \sum_{p=1}^\infty (y^2)^p \\
    &= \frac{y^2}{(1-y)^2} + \frac{y^3}{(1-y)(1-y^2)} - \frac{y^2}{1-y^2} \\
    &= \frac{3y^3}{(1-y)(1-y^2)},
\end{align*}
and the result for $H(\mathsf{D};x)$ follows from~\eqref{x to y}, upon setting $y = x^\ell = x^6$.

Since $\I_m(\mathsf{E}) = [1, m/2)$, we have $|\I_m(\mathsf{E})| = \lfloor (m-1)/2 \rfloor$.
Thus
\[
\sum_{m=1}^\infty |\I_m(\mathsf{E})| = \sum_{m=1}^\infty \lfloor (m-1)/2 \rfloor = \frac{y^3}{(1-y)(1-y^2)},
\]
and the result for $H(\mathsf{E};x)$ follows from~\eqref{x to y}, upon setting $y = x^\ell = x^6$.

Since $\I_m(\mathsf{F}) = [1, 4m) \setminus \{m, 2m, 3m\}$, we have $|\I_m(\mathsf{F})| = 4m - 4 = 4(m-1)$.
Thus
\[
\sum_{m=1}^\infty |\I_m(\mathsf{F})| y^m = 4 \sum_{m=1}^\infty (m-1) y^m = \frac{4y^2}{(1-y)^2},
\]
and the result for $H(\mathsf{F};x)$ follows from~\eqref{x to y}, upon setting $y = x^\ell = x^8$.

Since $\I_m(\mathsf{G}) = [1,4]$, we have $|\I_m(\mathsf{G})| = 4$ for all $m$.
Thus
\[
\sum_{m=1}^\infty |\I_m(\mathsf{G})| y^m = 4 \sum_{m=1}^\infty y^m = \frac{4y}{1-y},
\]
and the result for $H(\mathsf{G};x)$ follows from~\eqref{x to y}, upon setting $y = x^\ell = x^{20}$.
\end{proof}

\begin{thm}
    \label{thm:gf}
    Let $h_n$ denote the number of nontrivial homometry classes of binary bracelets with length $n$ and weight 5.
    The generating function $H(x)$ of the sequence $(h_n)_{n=1}^\infty$ has the closed form
    \begin{align*}
        H(x) & \coloneqq \sum_{n=1}^\infty h_n \, x^n \\
        &= \frac{2x^{10}}{(1-x^2)(1-x^4)^2} + \frac{x^{10}+4x^{15}}{(1-x^5)(1-x^{10})} + \frac{x^{12} + x^{18}}{(1-x^6)(1-x^{12})} + \frac{4x^{16}}{(1-x^8)^2} + \frac{4x^{20}}{1-x^{20}}.
    \end{align*}
    
\end{thm}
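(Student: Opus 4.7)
The plan is to derive the closed form for $H(x)$ as a simple consequence of the preceding classification theorem (Theorem~\ref{thm:discrete}) together with the type-by-type enumeration already carried out in Lemma~\ref{lemma: gfs individual}. The classification theorem partitions the nontrivial homometry classes of binary bracelets of length $n$ with $5$ black beads into the seven mutually exclusive families $\mathsf{A},\mathsf{B},\mathsf{C},\mathsf{D},\mathsf{E},\mathsf{F},\mathsf{G}$. Consequently, for every positive integer $n$,
\[
h_n = h_n(\mathsf{A}) + h_n(\mathsf{B}) + h_n(\mathsf{C}) + h_n(\mathsf{D}) + h_n(\mathsf{E}) + h_n(\mathsf{F}) + h_n(\mathsf{G}),
\]
and therefore, summing formal power series,
\[
H(x) = H(\mathsf{A};x) + H(\mathsf{B};x) + H(\mathsf{C};x) + H(\mathsf{D};x) + H(\mathsf{E};x) + H(\mathsf{F};x) + H(\mathsf{G};x).
\]

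I would then substitute the seven closed forms provided by Lemma~\ref{lemma: gfs individual} and simplify. The only non-cosmetic simplification is that the negative term $-\frac{3x^{18}}{(1-x^{6})(1-x^{12})}$ coming from $H(\mathsf{A};x)$ cancels exactly the positive term $+\frac{3x^{18}}{(1-x^{6})(1-x^{12})}$ from $H(\mathsf{D};x)$, leaving just $\frac{2x^{10}}{(1-x^{2})(1-x^{4})^{2}}$ from the combined Type~$\mathsf{A}$/Type~$\mathsf{D}$ contribution. The Type~$\mathsf{C}$ and Type~$\mathsf{E}$ contributions combine into $\frac{x^{12}+x^{18}}{(1-x^{6})(1-x^{12})}$. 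The Type~$\mathsf{B}$, Type~$\mathsf{F}$, and Type~$\mathsf{G}$ contributions appear verbatim. Collecting these pieces produces exactly the stated closed form.

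Since all of the hard work (the classification underlying Theorem~\ref{thm:discrete} and the generating function calculations in Lemma~\ref{lemma: gfs individual}) has already been done, no real obstacle remains. The only thing to verify carefully is that the seven types are in fact disjoint, so that $h_n$ is the sum (rather than, say, the union with inclusion-exclusion corrections) of the $h_n(\bullet)$. This disjointness is built into the classification theorem by the phrase ``belongs to exactly one of the following seven types,'' and was engineered during the proof of Proposition~\ref{prop:classification cts} through the careful handling of the three non-empty intersections recorded in~\eqref{intersections}; those intersections were assigned to a single type (Type~$\mathsf{A}$, Type~$\mathsf{C}$, and Type~$\mathsf{E}$, respectively) and explicitly removed from the others. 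Once this is noted, the proof reduces to a direct summation and the algebraic cancellation described above.
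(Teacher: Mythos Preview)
Your proposal is correct and follows essentially the same approach as the paper: invoke the disjointness and exhaustiveness of Types~$\mathsf{A}$--$\mathsf{G}$ from Theorem~\ref{thm:discrete}, sum the seven generating functions from Lemma~\ref{lemma: gfs individual}, and simplify. Your explicit identification of the cancellation between the $\mathsf{A}$ and $\mathsf{D}$ contributions and the combination of the $\mathsf{C}$ and $\mathsf{E}$ terms is exactly the ``light simplification'' the paper alludes to.
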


\begin{proof}

By Theorem~\ref{thm:discrete}, Types~$\mathsf{A}$--$\mathsf{G}$ are exhaustive and pairwise disjoint.
Therefore we simply add the seven type-specific generating functions to obtain $H(x) = H(\mathsf{A}; x) + \cdots + H(\mathsf{G}; x)$.
The result follows from Lemma~\ref{lemma: gfs individual}, upon some light simplification.    
\end{proof}

Inspecting the numerators and denominators in the rational functions in Theorem~\ref{thm:gf}, we see that
\[
h_n > 0 \text{ if and only if $n \geq 10$ and $n$ is divisible by $2$ or $5$}. 
\]
Concretely, starting at $n=10$, the generating function $H(x)$ yields the following initial values of $h_n$:
\[
(h_n)_{n=10}^\infty = (3, 0, 3, 0, 6, 5, 10, 0, 14, 0, 22, 0, 20, 0, 31, 10, 30, 0, 30, 0, 57, 0, 54, 0, 56, 15, 61, 0, 72, 0, 108, \ldots).
\]
Note that by replacing the term $x^{18}$ with $tx^{18}$ in Theorem~\ref{thm:gf}, we can refine $H(x)$ with respect to the size of the homometry classes.
That is, the resulting bivariate power series reveals the number $h_n^{(2)}$ of homometric pairs and the number $h_n^{(3)}$ of homometric triples, since its expansion is
\begin{equation}
    \label{gf refined}
    \sum_{n=1}^\infty \left( h_n^{(2)} + h_n^{(3)}t \right) x^n.
\end{equation}
This follows from the fact that Type~$\mathsf{E}$ is the only type consisting of homometric triples rather than pairs;
indeed, changing $x^{18}$ to $t x^{18}$ is the same as changing $H(\mathsf{E};x)$ to $t \cdot H(\mathsf{E};x)$ in the proof of Theorem~\ref{thm:discrete}.

\begin{rem}
    The sum of rational functions in Theorem~\ref{thm:gf} is especially amenable to extracting asymptotic information about the values $h_n$.
    In particular, the five rational functions have poles at $x=1$ of orders 3, 2, 2, 2, and 1, respectively;
    therefore the pole of order 3 (coming from the first rational function) dominates, and consequently $h_n = O(n^{3-1}) = O(n^2)$.
    To pin down the constant, we again look at the first rational function in Theorem~\ref{thm:gf},
    and take the limit of its product with $(1-x)^3$ as $x \rightarrow 1$:
    \[
    \lim_{x \rightarrow 1} \left[ (1-x)^3 \cdot \frac{2x^{10}}{(1-x^2)(1-x^4)^2} \right] = \frac{1}{16}.
    \]
    Since the first rational function is a function of $x^2$, it contributes nothing to $h_n$ for $n$ odd;
    hence we have the asymptotic
    \begin{equation}
        \label{asymptotic}
        h_n \sim \frac{n^2}{16} \text{ as $n \rightarrow \infty$ (for $n$ even)},
    \end{equation}
    while $h_n = O(n)$ for $n$ odd.
    On the left-hand side below, to illustrate~\eqref{asymptotic}, we plot the values of $h_n$ for $n = 1, \ldots, 200$, along with the red parabola $n^2/16$.
    On the right-hand side below, we do the same thing for $n = 20,40,60, \ldots, 1000$.

    \begin{center} 
    \includegraphics[height=3cm]{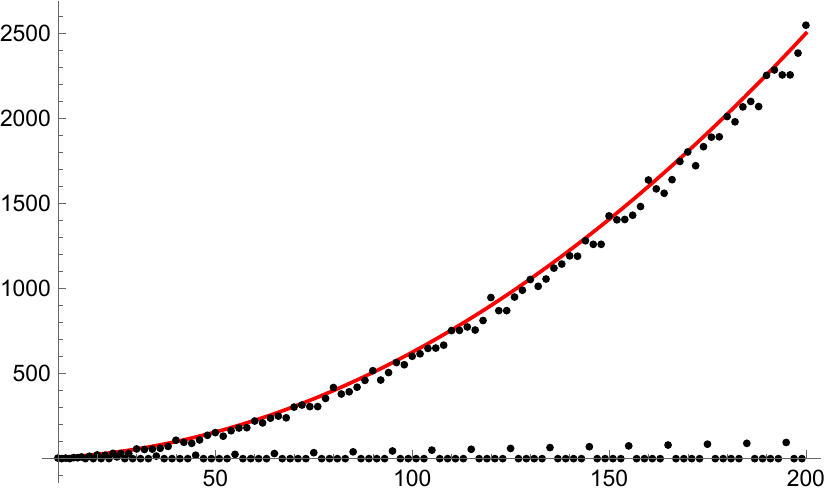}
    \hspace{1cm}
    \includegraphics[height=3cm]{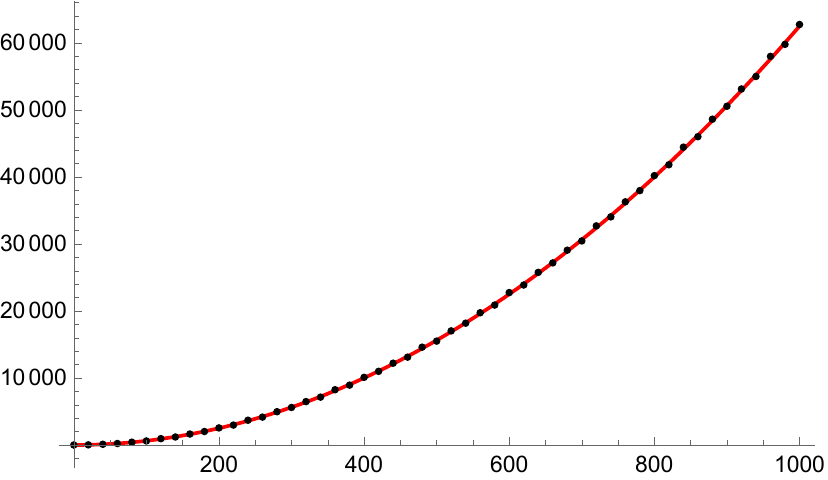}
    \end{center}
\end{rem}

\section{Open problems}

\label{sec:open}

The results in this paper suggest several avenues for further research.

\begin{prob}
\label{prob:criterion}
    Give a combinatorial characterization of the 22 difference tables in $\bigcup_{(p,q)} \M(p,q)$.
\end{prob}

Essentially, Problem~\ref{prob:criterion} aims to obviate the need for Algorithm~\ref{alg:Mpq}, by finding an \emph{a priori} condition that cuts out a minimal set $\M(p,q)$.
As a first step, one can easily write down certain simple sufficient conditions for $\XY(\Delta)$ to be empty; for example, due to the relations~\eqref{sums ID}, for $h \leq i < j \leq k$, we have $\XY(\Delta) = \varnothing$ if the entry $\boxed{\mathsf{hk}}$ lies weakly southwest of the entry $\boxed{\mathsf{ij}}$.
One can write down similar conditions corresponding to the various inequalities in~\eqref{positive}--\eqref{noncongruent}, but there are certainly other less obvious configurations that produce an empty solution set.
Ultimately, if $\Delta$ and $\Delta'$ arise from the permutations $\pi$ and $\pi'$, respectively, then one might hope to find a certain condition on $\pi$ and $\pi'$ that is equivalent to $\XY(\Delta) \subseteq \XY(\Delta')$.
Recall that the minimal set of 22 tables which we give in this paper is unique up to substituting tables with identical solution sets; hence it is conceivable that a different choice of representative tables could reveal a combinatorial pattern that is currently hidden.

\begin{prob}
    Extend the methods in this paper to solve Problem~\ref{prob:main} for weights $k\geq 6$.
\end{prob}

On one hand, theoretically, our methods depended very little on the fact that $k=5$.
On the other hand, the jump from $k=5$ to $k=6$ presents several practical complications.
First, for $k=6$ the number of pairwise distances is $\binom{6}{2} = 15$, meaning that the number of permutations of $D$ is now $15! \approx 1.3 \times 10^{12}$, so that an analogue of Algorithm~\ref{alg:Mpq} is far less feasible.
(Thus, any hope of solving the $k=6$ case seems to depend on first solving Problem~\ref{prob:criterion} above.)
Moreover, for $k=6$ we would expect (possibly many) more than just five valid pairs $(p,q)$, given the appropriate analogue of ``long count'' for a cyclic hexagon.
Add to this the fact, mentioned in the introduction, that unlike the $k \leq 5$ case, for $k \geq 6$ there are homometric subsets of $\Z$ with cardinality $k$; since all of these descend to homometric bracelets as well, we would expect this fact to complicate the attempt at solving Problem~\ref{prob:main}.

\begin{prob}
    \label{prob:group}
    Obtain a group-theoretic version of the classification in Theorem~\ref{thm:discrete}.
\end{prob}

To motivate Problem~\ref{prob:group}, we point out the following ``hidden symmetries'' among homometric bracelets.
Let $U_n \coloneqq \Z_n^\times$ denote the group of multiplicative units in the ring $\Z_n$, and let
\[
\widetilde{U}_n \coloneqq U_n / \{\pm 1\}.
\]
Then $\widetilde{U}_n$ acts on a bracelet of length $n$ by multiplication (modulo $n$) on any of the bracelet representatives:
\[
u \cdot [s_1, \ldots, s_k] = [us_1, \ldots, us_k].
\]
One can easily show that this gives a well-defined group action of $\widetilde{U}_n$ on the set of all length-$n$ binary bracelets with some fixed weight $k$.
(This is of course true of $U_n$ as well, but since bracelets are unchanged under reflection, one may as well work modulo sign to obtain a faithful action.)
Moreover, this action preserves the homometry relation, and so it actually extends to a $\widetilde{U}_n$-action on the set of all homometry classes of such bracelets.
Relating to this paper, then, a natural problem is to describe the structure of the $\widetilde{U}_n$-orbits in the set of nontrivial homometry classes.
This might include, for instance, a characterization of the fixed points (i.e., nontrivial homometry classes whose bracelets are maximally ``symmetrical'' with respect to the $\widetilde{U}_n$-action).
In the case $k=5$, for relatively small values of $n$, we have found that fixed points certainly do occur but are somewhat rare.
Moreover, it seems that the $\widetilde{U}_n$-action actually \emph{refines} the classification in Theorem~\ref{thm:discrete}, in the following sense:

\begin{conj}
    With respect to the action given above, the group $\widetilde{U}_n$ acts on the set of nontrivial homometry classes of each Type $\mathsf{A}$--$\mathsf{G}$ in Theorem~\ref{thm:discrete}.
\end{conj}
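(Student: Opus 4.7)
The plan is to verify the conjecture type-by-type, leveraging the explicit parametric forms in Theorem~\ref{thm:discrete} together with the observation that multiplication by any $u \in U_n$ is a $\Z$-linear automorphism of $\Z_n$. In particular it preserves every subgroup of $\Z_n$, and when $n$ is even it fixes $n/2$ (since then $u$ is odd).

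First, since the $\widetilde{U}_n$-action preserves the cardinality of each homometry class, and Type $\mathsf{E}$ is the unique type whose classes are triples in our classification, Type $\mathsf{E}$ is automatically $\widetilde{U}_n$-closed. For Type $\mathsf{G}$, the parameter set $\I_m(\mathsf{G}) = \{1,2,3,4\}$ is finite for each $m$, and one can verify by direct inspection that the action permutes the four pairs. For each of the remaining types, with divisibility condition $n = \ell m$ for type-specific $\ell \in \{2, 5, 6, 6, 8\}$, I would identify a structural \emph{fingerprint} residing inside the subgroup $m\Z_n \cong \Z_\ell$: for Type $\mathsf{A}$, both bracelets of the pair share the antipodal pair $\{0, m\}$; for Type $\mathsf{B}$, both bracelets contain exactly three elements of $m\Z_n \cong \Z_5$; for Type $\mathsf{C}$, both bracelets contain the $3$-point subgroup $\{0, 2m, 4m\} \subset \Z_{6m}$; for Type $\mathsf{F}$, each bracelet contains an antipodal pair. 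Multiplication by $u$ induces multiplication by $u \bmod \ell$ on $m\Z_n$, so each fingerprint is preserved (after applying a $D_{2n}$-equivalence). The new parameters are then computed explicitly as $i' \equiv \pm ui$ and $j' \equiv \pm uj$ modulo the appropriate subgroup, with signs and representatives chosen so that $(i', j') \in \I_m(\bullet)$.

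The main obstacle will be handling the degenerate and exceptional conditions defining the index sets $\I_m(\bullet)$. The subtlest is Type $\mathsf{A}$'s exception ``if $j = m/3$ then $i = m/6$,'' which encodes exactly the boundary case where a Type $\mathsf{A}$ pair would otherwise collapse into a Type $\mathsf{E}$ triple, as explained within the proof of Proposition~\ref{prop:classification cts}. Preservation of this condition follows from the separate preservation of Type $\mathsf{E}$ combined with the fact that our classification is both exhaustive and disjoint. A related challenge is disambiguating Types $\mathsf{C}$ and $\mathsf{D}$, both having $\ell = 6$: the distinguishing feature is that Type $\mathsf{C}$ bracelets contain the full order-$3$ subgroup $\{0, 2m, 4m\} \subset \Z_{6m}$ while generic Type $\mathsf{D}$ bracelets do not, and this containment is manifestly preserved by $u$.
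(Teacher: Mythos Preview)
The paper does not contain a proof of this statement. It is stated as a \textbf{Conjecture} in Section~\ref{sec:open} (``Open problems''), and the authors explicitly write that they ``would be interested in verifying, proving, and better understanding this conjecture.'' There is therefore nothing in the paper to compare your proposal against.

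As for the proposal itself, it is a plausible outline but not yet a proof, and several steps are more delicate than you suggest. The argument for Type~$\mathsf{E}$ (unique triple type) is sound. The reduction for Type~$\mathsf{G}$ is also fine in principle, since for $n=20m$ the action of $u$ on the bracelets in Theorem~\ref{thm:discrete} factors through $u \bmod 20$, so a finite check in $\widetilde{U}_{20}$ suffices. For the remaining types, however, your ``fingerprint'' idea only shows that certain subgroup incidences are preserved; it does not by itself show that the $u$-image of a Type~$\bullet$ pair again matches the \emph{parametric form} of Type~$\bullet$ with parameters in $\I_m(\bullet)$. For instance, in Type~$\mathsf{A}$ you correctly note $um \equiv m$, so the image pair is $\{[0,ui,ui+uj,m+ui-uj,m],[0,m+ui,ui+uj,m+ui-uj,m]\}$, but you must then exhibit a dihedral equivalence taking this to the canonical Type~$\mathsf{A}$ form with some $(i',j') \in \I_m(\mathsf{A})$; the claim ``$i' \equiv \pm ui$, $j' \equiv \pm uj$ modulo the appropriate subgroup'' needs to be made precise (modulo what? why do the resulting representatives land in the open square $(0,m/2)^2$, avoid the diagonal, and respect the $j=m/3$ exception?). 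Your handling of that exception via ``Type~$\mathsf{E}$ is preserved and the classification is disjoint'' is clever but circular as stated: you are using closure of Type~$\mathsf{E}$ to prove closure of Type~$\mathsf{A}$, yet the Type~$\mathsf{E}$ argument only shows that a triple goes to a triple, not that a Type~$\mathsf{A}$ pair with $j=m/3$, $i=m/6$ cannot map to a pair of some other type. Similarly, for Types~$\mathsf{C}$ and~$\mathsf{D}$ you assert that ``generic'' Type~$\mathsf{D}$ bracelets do not contain $\{0,2m,4m\}$, but you must rule out \emph{all} parameter values, and you still need the converse direction (that the image of a Type~$\mathsf{D}$ pair is again Type~$\mathsf{D}$, not merely not Type~$\mathsf{C}$). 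These gaps are likely fillable with careful bookkeeping, but the proposal as written is a strategy rather than a proof.
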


In other words, given a nontrivial homometry class of a certain type $\mathsf{A}$--$\mathsf{G}$, all of its $\widetilde{U}_n$-translates belong to that same type.
We would be interested in verifying, proving, and better understanding this conjecture, and then using it to refine our seven-type classification in terms of $\widetilde{U}_n$-orbits.
Of course, this also raises the enumerative problem of counting the $\widetilde{U}_n$-orbits for each type.

\appendix

\section{Full example: the difference table $\mathsf{D_4}$ and its solution set}
\label{app:example}

In this example, we illustrate Definitions~\ref{def:difference table} and~\ref{def:X and XY} by constructing a difference table from a permutation $\pi$, and then explicitly writing down the system defining its solution set.
(It turns out that this difference table is the element in $\M(1,2)$ that we labeled as $\mathsf{D_4}$.)
The example is meant to make Algorithm~\ref{alg:Mpq} completely transparent and easy to program.

First we follow Definition~\ref{def:difference table}, where $(p,q) = (1,2)$.
Let $\pi$ be the permutation of $D$ that acts on~\eqref{table X} as shown in the first step below:

\begin{equation}
\label{example three steps}
\begin{ytableau}[\mathsf]
    {01} & {02} & {03} & {04} \\
    \none & {12} & {13} & {14} \\
    \none & \none & {23} & {24}\\
    \none & \none & \none & {34}
\end{ytableau} 
\quad
\xrightarrow{\quad\pi\quad}
\begin{ytableau}[\mathsf]
    {02} & {03} & {14} & {24} \\
    \none & {23} & {34} & {04} \\
    \none & \none & {13} & {01}\\
    \none & \none & \none & {12}
\end{ytableau}
\quad
\xrightarrow{\substack{\text{bars on} \\ \text{$p$-long} \\ \text{entries}}}
\begin{ytableau}[\mathsf]
    {02} & {03} & {14} & {24} \\
    \none & {23} & {34} & \mybar{04} \\
    \none & \none & {13} & {01}\\
    \none & \none & \none & {12}
\end{ytableau}
\quad
\xrightarrow{\substack{\text{bars on} \\ \text{$q$-long} \\ \text{positions}}}
\begin{ytableau}[\mathsf]
    {02} & {03} & \mybar{14} & \mybar{24} \\
    \none & {23} & {34} & \mybar{04} \\
    \none & \none & {13} & {01}\\
    \none & \none & \none & {12}
\end{ytableau}
\end{equation}

\noindent Since $p=1$, there is one $p$-long element of $D$, namely $\boxed{\mathsf{04}}$, by~\eqref{long in X}.
Thus in the second step in~\eqref{example three steps}, we place a bar over the entry $\boxed{\mathsf{04}}$ in the permuted table.
Finally, since $q=2$, there are two $q$-long elements of $D$, namely $\boxed{\mathsf{03}}$ and $\boxed{\mathsf{04}}$, by~\eqref{long in X}.
Referring to the original (unpermuted) table on the far left of~\eqref{example three steps}, we observe the positions of these two entries, namely the two rightmost boxes in the top row.
Thus in the third step in~\eqref{example three steps}, we place bars over the entries in the corresponding positions in the permuted table, namely $\boxed{\mathsf{14}}$ and $\boxed{\mathsf{24}}$.
(If one of these positions had already had a bar, then we would have removed the bar, since two bars cancel each other out.)

The rightmost table in~\eqref{example three steps} is the element $\Delta \in \D(1,2)$ corresponding to the permutation $\pi$. 
We should view this $(1,2)$-difference table $\Delta$ as a generic pair of homometric 5-point bracelets $[X]$ and $[Y]$ with long counts $1$ and $2$, respectively:

\begin{center} 
\tikzset{
  midlabel/.style={
    draw=black,
    fill=white,
    text=black,
    inner sep=2pt,
    rectangle,
    font=\tiny
  }
}
\begin{tikzpicture}[scale=2, baseline]

    \node[anchor=base] at (0,0) {};
    
    \draw[thick] (0,0) circle (1);

    \node (a) at (90:1) [dot] {};
    \node (b) at (50:1) [dot] {};
    \node (c) at (5:1) [dot] {};
    \node (d) at (-40:1) [dot] {};
    \node (e) at (-110:1) [dot] {};

    \node[draw,circle, fill=black, inner sep=0pt, minimum width=2pt] at (0,0) {};

    \draw[thick, lightgray] (a) -- (b) node[midlabel, midway] {$\mathsf{01}$} -- (c) node[midlabel, midway] {$\mathsf{12}$} -- (d) node[midlabel, midway] {$\mathsf{23}$} -- (e) node[midlabel, midway] {$\mathsf{34}$} --(a) node[midlabel, pos=.6] {$\mathsf{\overline{04}}$} (a) -- (c) node[midlabel, pos=.35] {$\mathsf{02}$} (a) -- (d) node[midlabel, pos=.32] {$\mathsf{03}$} (b) -- (d) node[midlabel, midway] {$\mathsf{13}$} (b) -- (e) node[midlabel, pos=.7] {$\mathsf{14}$} (c) -- (e) node[midlabel, midway] {$\mathsf{24}$};

    \node at (0,-1.2) {$[X]$};
    
\end{tikzpicture}
\hspace{10ex}
\begin{tikzpicture}[scale=2, baseline]
    \node[anchor=base] at (0,0) {};
    
    \draw[thick] (0,0) circle (1);

    \node (a) at (90:1) [dot] {};
    \node (b) at (0:1) [dot] {};
    \node (c) at (-50:1) [dot] {};
    \node (d) at (-110:1) [dot] {};
    \node (e) at (-150:1) [dot] {};

    \node[draw,circle, fill=black, inner sep=0pt, minimum width=2pt] at (0,0) {};

    \draw[thick, lightgray] (a) -- (b) node[midlabel, midway] {$\mathsf{02}$} -- (c) node[midlabel, midway] {$\mathsf{23}$} -- (d) node[midlabel, midway] {$\mathsf{13}$} -- (e) node[midlabel, midway] {$\mathsf{12}$} --(a) node[midlabel, midway] {$\mathsf{24}$} (a) -- (c) node[midlabel, pos=.35] {$\mathsf{03}$} (a) -- (d) node[midlabel, pos=.32] {$\mathsf{14}$} (b) -- (d) node[midlabel, midway] {$\mathsf{34}$} (b) -- (e) node[midlabel, midway] {$\mathsf{\overline{04}}$} (c) -- (e) node[midlabel, midway] {$\mathsf{01}$};

    \node at (0,-1.2) {$[Y]$};
    
\end{tikzpicture}
\end{center}

\noindent In the diagrams above, we label each chord with the \emph{distance} (not the difference!) between its endpoints.
Note that these labels are precisely the entries obtained after the \emph{second} step in~\eqref{example three steps}, since that step renders all entries short.

Next we will construct the solution set $\mathbf{XY}\!_{1,2}(\Delta)$, following Definition~\ref{def:X and XY}.
Expanding the shorthand in~\eqref{positive}--\eqref{ineqs canonical ordering p}, respectively, where $p=1$, we obtain the following three constraints:
\begin{align}
\label{p constraints example}
\begin{split}
    &0 < x_1 < x_2 < x_3 < x_4,\\
    &x_4 > 1/2 \text{ and } x_3 \leq 1/2 \text{ and } x_4 - x_1 \leq 1/2,\\
    &x_1 \leq x_4 - x_3, \text{ and if }x_1 = x_4 - x_3 \text{ then } x_2 - x_1 \leq x_3 - x_2.  
\end{split}
\end{align}
To write out the six equations~\eqref{eqs six sums}, we simply refer to the table $\Delta$ on the far right of~\eqref{example three steps}, and force each off-diagonal entry to be the sum of the diagonal entries lying weakly southwest of it:
\begin{equation}
\label{p equations example}
\begin{array}{ccc}
\begin{aligned}
    \boxed{\mathsf{03}} & = \boxed{\mathsf{02}} + \boxed{\mathsf{23}}, \\
    \boxed{\mathsf{34}} & = \boxed{\mathsf{23}} + \boxed{\mathsf{13}}, \\
    \boxed{\mathsf{01}} & = \boxed{\mathsf{13}} + \boxed{\mathsf{12}}, \\
    \boxed{\mathsf{\overline{14}}} &= \boxed{\mathsf{02}} + \boxed{\mathsf{23}} + \boxed{\mathsf{13}}, \\
    \boxed{\mathsf{\overline{04}}} & = \boxed{\mathsf{23}} + \boxed{\mathsf{13}} + \boxed{\mathsf{12}}, \\
    \boxed{\mathsf{\overline{24}}} & = \boxed{\mathsf{02}} + \boxed{\mathsf{23}} + \boxed{\mathsf{13}} + \boxed{\mathsf{12}}.
\end{aligned}
& \leadsto \quad & \begin{bmatrix}
    \phantom{-}0 & \phantom{-}0 & 0 & \phantom{-}0 \\
    -1 & -1 & 2 & -1\\
    -3 & \phantom{-}1 & 1 & \phantom{-}0\\
    -2 & \phantom{-}0 & 2 & \phantom{-}1\\
    -2 & \phantom{-}0 & 2 & \phantom{-}1\\
    -2 & \phantom{-}0 & 2 & \phantom{-}1
\end{bmatrix} \begin{bmatrix}
    x_1 \\ x_2 \\ x_3 \\ x_4
\end{bmatrix} = \begin{bmatrix}
    0 \\ 0 \\ 0 \\ 1 \\ 1 \\ 1
\end{bmatrix}.
\end{array}
\end{equation}
To write out~\eqref{same but for q}, we do the same as we did for~\eqref{p constraints example} above, where $q=2$ and where we take the entries from the positions in $\Delta$ corresponding to the original entries in the unpermuted table~\eqref{table X}:
\begin{align}
    \label{q constraints example}
    \begin{split}
    &1 - (x_4 - x_1) > 1/2 \text{ and } x_3 < 1/2 \text{ and } 1 - x_4 \leq 1/2,\\
    &x_2 \leq 1-(1-(x_4 - x_2)), \text{ and if } x_2 = 1-(1-(x_4 - x_2)) \text{ then } x_3 - x_2 \leq x_2 - x_1.
    \end{split}
\end{align}
Finally, recalling that $\mathbf{y}(\Delta)$ is given by the top row of $\Delta$, namely
\begin{equation}
\label{yDelta example}
    \mathbf{y}(\Delta) = \left( \boxed{\mathsf{02}}, \boxed{\mathsf{03}}, \boxed{\mathsf{\overline{14}}}, \boxed{\mathsf{\overline{24}}} \right) = \big(x_2, \; x_3, \; 1-(x_4-x_1), \; 1 - (x_4 - x_2) \big),
\end{equation}  
the constraint~\eqref{noncongruent} becomes
\begin{equation}
    \label{noncongruent example}
    x_1 \neq x_2 \text{ or } x_2 \neq x_3 \text{ or } x_3 \neq 1 - (x_4 - x_1) \text{ or } x_4 \neq 1 - (x_4 - x_2).
\end{equation}
Using software to solve~\eqref{p constraints example}--\eqref{noncongruent example} for $\mathbf{x} = (x_1, x_2, x_3, x_4)$, we obtain the following line segment in $\R^4$:
\[
\mathbf{X}_{1,2}(\Delta) = \left\{ \myvec{x_1 \\ 2x_1 - 1/6 \\ 1/6 + x_1 \\ 2/3} : 1/6 < x_1 \leq 1/4 \right\}.
\]
Thus by~\eqref{yDelta example}, we obtain the solution set
\[
\mathbf{XY}\!_{1,2}(\Delta) = \left\{ \left( \myvec{x_1 \\ 2x_1 - 1/6 \\ 1/6 + x_1 \\ 2/3}_{\textstyle ,} \myvec{2x_1 - 1/6 \\ 1/6 + x_1 \\ 1/3 + x_1 \\ 1/6 + 2x_1} \right) : 1/6 < x_1 \leq 1/4 \right\}. 
\]
This concludes the example.
(Compare with the $\Delta = \mathsf{D_4}$ entry in the $\M(1,2)$ table in Appendix~\ref{app:tables}.)

\section{The solution sets $\XY(\Delta)$ before reparametrization}
\label{app:tables}

In each table below (corresponding to each of the five $(p,q)$ pairs in Lemma~\ref{lemma: valid pq}), we write down the explicit solution set $\XY(\Delta)$ for each $\Delta \in \M(p,q)$.
These results are easily obtained by programming the system~\eqref{positive}--\eqref{noncongruent} given in Definition~\ref{def:X and XY}.
Each of the 22 solution sets below appears in the proof of Proposition~\ref{prop:classification cts}, albeit in a carefully reparametrized form.
We provide the ``raw data'' here so that one can easily verify the reparametrizations given in that proof above.

\setlength{\LTleft}{0pt}

\begin{longtable}{|c|c|l|}
\hline
     & $\Delta \in \M(0,1)$ & \vpad{\mathbf{XY}\!_{0,1}(\Delta)} \\ \hline

    $\mathsf{A_1}$ & \smallpad{\begin{ytableau}[\mathsf]
    {01} & {12} & {13} & \mybar{14} \\
    \none & {34} & {24} & {04} \\
    \none & \none & {23} & {03}\\
    \none & \none & \none & {02}
\end{ytableau}} & $\left\{ \left( \myvec{x_1 \\ x_2 \\ 1/2 + 2x_1 - x_2 \\ 1/2}_{\textstyle ,} \myvec{x_1\\x_2-x_1\\ 1/2 + x_1 - x_2 \\ 1/2 + x_1} \right) : \begin{array}{c} 0 < 3x_1 < x_2 < x_1 + 1/4\\\text{or}\\0 < 3x_1 = x_2 \leq 1/4\\ \end{array} \right\}$ \\ \hline

$\mathsf{A_2}$ & \smallpad{\begin{ytableau}[\mathsf]
    {34} & {23} & {13} & \mybar{03} \\
    \none & {01} & {02} & {04} \\
    \none & \none & {12} & {14}\\
    \none & \none & \none & {24}
\end{ytableau}} & $\left\{ \left( \myvec{2x_3 - x_2 - 1/2 \\ x_2 \\ x_3 \\ 1/2}_{\textstyle ,} \myvec{1/2 - x_3 \\ x_3 - x_2 \\ 1/2 + x_2 - x_3 \\ 1 - x_3} \right) : \begin{array}{c} x_2 < 2x_3 - 1/2 < 2x_2 \text{ and } 3x_3 - 1 < x_2;\\ \text{or} \\ 1/2 < 3x_3 - 1 = x_2 \leq 1/4 \end{array} \right\}$ \\ \hline

$\mathsf{D_1}$ & \smallpad{\begin{ytableau}[\mathsf]
    {24} & {12} & {02} & \mybar{04} \\
    \none & {23} & {34} & {14} \\
    \none & \none & {01} & {03}\\
    \none & \none & \none & {13}
\end{ytableau}} & $\left\{ \left( \myvec{2x_2 - 1/2 \\ x_2 \\ 1/3 \\ 1/6 + x_2}_{\textstyle ,} \myvec{1/6 \\ 1/2 - x_2 \\ x_2 \\ 5/6 - x_2} \right) : 1/4 < x_2 < 1/3 \right\}$ \\ \hline

$\mathsf{D_2}$ & \smallpad{\begin{ytableau}[\mathsf]
    {24} & {12} & {02} & \mybar{14} \\
    \none & {34} & {23} & {04} \\
    \none & \none & {01} & {03}\\
    \none & \none & \none & {13}
\end{ytableau}} & $\left\{ \left( \myvec{2x_2 - 1/2 \\ x_2 \\ 2x_2 - 1/6 \\ 1/6 + x_2}_{\textstyle ,} \myvec{1/6 \\ 1/2 - x_2 \\ x_2 \\ 1/3 + x_2} \right) : 1/4 < x_2 < 5/18 \right\}$ \\ \hline

$\mathsf{D_3}$ & \smallpad{\begin{ytableau}[\mathsf]
    {02} & {23} & {24} & \mybar{03} \\
    \none & {01} & {12} & {04} \\
    \none & \none & {34} & {14}\\
    \none & \none & \none & {13}
\end{ytableau}} & $\left\{ \left( \myvec{x_1 \\ 1/6 \\ 1/3 + x_1 \\ 1/2 - x_1}_{\textstyle ,} \myvec{1/6 \\ 1/6 + x_1 \\ 1/3 - x_1 \\ 2/3 - x_1} \right) : 0 < x_1 \leq 1/18 \right\}$ \\ \hline

$\mathsf{F_1}$ & \smallpad{\begin{ytableau}[\mathsf]
    {01} & {23} & {13} & \mybar{14} \\
    \none & {02} & {34} & {04} \\
    \none & \none & {12} & {24}\\
    \none & \none & \none & {03}
\end{ytableau}} & $\left\{ \left( \myvec{x_1 \\ 1/8 \\ 1/4 + x_1 \\ 1/2}_{\textstyle ,} \myvec{x_1 \\ 1/8 + x_1 \\ 1/4 \\ 1/2 + x_1} \right) : 0 < x_1 < 1/8 \right\}$ \\ \hline

$\mathsf{F_2}$ & \smallpad{\begin{ytableau}[\mathsf]
    {24} & {12} & {13} & \mybar{02} \\
    \none & {01} & {34} & {04} \\
    \none & \none & {23} & {14}\\
    \none & \none & \none & {03}
\end{ytableau}} & $\left\{ \left( \myvec{x_1 \\ 1/4 + x_1 \\ 3/8 \\ 1/2}_{\textstyle ,} \myvec{1/4 - x_1 \\ 1/4 \\ 3/8 - x_1 \\ 3/4 - x_1} \right) : 0 < x_1 < 1/8 \right\}$ \\ \hline

$\mathsf{G_1}$ & \smallpad{\begin{ytableau}[\mathsf]
    {34} & {23} & {13} & \mybar{04} \\
    \none & {01} & {02} & {14} \\
    \none & \none & {12} & {24}\\
    \none & \none & \none & {03}
\end{ytableau}} & $\left\{ \left( \myvec{1/20 \\ 2/20 \\ 6/20 \\ 9/20}_{\textstyle ,} \myvec{3/20 \\ 4/20 \\ 5/20 \\ 11/20} \right) \right\}$ \\ \hline

$\mathsf{G_2}$ & \smallpad{\begin{ytableau}[\mathsf]
    {12} & {02} & {13} & \mybar{04} \\
    \none & {01} & {23} & {14} \\
    \none & \none & {34} & {03}\\
    \none & \none & \none & {24}
\end{ytableau}} & $\left\{ \left( \myvec{1/20 \\ 4/20 \\ 7/20 \\ 9/20}_{\textstyle ,} \myvec{3/20 \\ 4/20 \\ 6/20 \\ 11/20} \right) \right\}$ \\ \hline
\end{longtable}

\begin{longtable}{|c|c|l|}
\hline
     & $\Delta \in \M(0,2)$ & \vpad{\mathbf{XY}\!_{0,2}(\Delta)} \\ \hline

$\mathsf{B_1}$ & \smallpad{\begin{ytableau}[\mathsf]
    {13} & {03} & \mybar{04} & \mybar{14} \\
    \none & {01} & {02} & {24} \\
    \none & \none & {12} & {23}\\
    \none & \none & \none & {34}
\end{ytableau}} & $\left\{ \left( \myvec{x_1 \\ 1/5 \\ 2/5 \\ 2/5 + x_1}_{\textstyle ,} \myvec{2/5 - x_1 \\ 2/5 \\ 3/5 - x_1 \\ 3/5} \right) : 0 < x_1 < 1/10 \right\}$ \\ \hline
\end{longtable}

\begin{longtable}{|c|c|l|}
\hline
     & $\Delta \in \M(1,1)$ & \vpad{\mathbf{XY}\!_{1,1}(\Delta)} \\ \hline

    $\mathsf{A_3}$ & \smallpad{\begin{ytableau}[\mathsf]
    {12} & {02} & {03} & \mybar{24} \\
    \none & {01} & {34} & {14} \\
    \none & \none & {23} & \mybar{04}\\
    \none & \none & \none & {13}
\end{ytableau}} & $\left\{ \left( \myvec{x_1 \\ 2x_3 - 1/2 \\ x_3 \\ 1/2 + x_1}_{\textstyle ,} \myvec{2x_3 - x_1 - 1/2 \\ 2x_3 - 1/2 \\ x_3 \\ 2x_3 - x_1} \right) : 0 < x_1 < x_3 - 1/4 < 1/4 \right\}$ \\ \hline

$\mathsf{B_2}$ & \smallpad{\begin{ytableau}[\mathsf]
    {01} & {02} & \mybar{04} & \mybar{14} \\
    \none & {12} & {34} & {24} \\
    \none & \none & {13} & {03}\\
    \none & \none & \none & {23}
\end{ytableau}} & $\left\{ \left( \myvec{x_1 \\ 1/5 \\ 1/5 + x_1 \\ 3/5}_{\textstyle ,} \myvec{x_1 \\ 1/5 \\ 2/5 \\ 2/5 + x_1} \right) : 1/10 < x_1 < 1/5 \right\}$ \\ \hline

$\mathsf{B_3}$ & \smallpad{\begin{ytableau}[\mathsf]
    {12} & {13} & {03} & \mybar{02} \\
    \none & {23} & {34} & {14} \\
    \none & \none & {01} & \mybar{04}\\
    \none & \none & \none & {24}
\end{ytableau}} & $\left\{ \left( \myvec{x_1 \\ 1/5 + x_1 \\ 2/5 \\ 3/5}_{\textstyle ,} \myvec{1/5 \\ 2/5 - x_1 \\ 2/5 \\ 4/5 - x_1} \right) : 1/10 \leq x_1 < 1/5 \right\}$ \\ \hline

$\mathsf{B_6}$ & \smallpad{\begin{ytableau}[\mathsf]
    {02} & \mybar{04} & {03} & \mybar{12} \\
    \none & {01} & {23} & {14} \\
    \none & \none & {34} & {13}\\
    \none & \none & \none & {24}
\end{ytableau}} & $\left\{ \left( \myvec{1/10 \\ 3/10 \\ 5/10 \\ 6/10}_{\textstyle ,} \myvec{3/10 \\ 4/10 \\ 5/10 \\ 8/10} \right) \right\}$ \\ \hline

$\mathsf{F_3}$ & \smallpad{\begin{ytableau}[\mathsf]
    {34} & {23} & {03} & \mybar{13} \\
    \none & {01} & {24} & {14} \\
    \none & \none & {02} & \mybar{04}\\
    \none & \none & \none & {12}
\end{ytableau}} & $\left\{ \left( \myvec{x_1 \\ 1/4 \\ 3/8 + x_1 \\ 1/2 + x_1}_{\textstyle ,} \myvec{1/8 \\ 1/8 + x_1 \\ 3/8 + x_1 \\ 5/8} \right) : 0 < x_1 < 1/8 \right\}$ \\ \hline
\end{longtable}

\begin{longtable}{|c|c|l|}
\hline
     & $\Delta \in \M(1,2)$ & \vpad{\mathbf{XY}\!_{1,2}(\Delta)} \\ \hline

$\mathsf{C}$ & \smallpad{\begin{ytableau}[\mathsf]
    {02} & {03} & \mybar{14} & {04} \\
    \none & {23} & {01} & {24} \\
    \none & \none & {13} & {34}\\
    \none & \none & \none & {12}
\end{ytableau}} & $\left\{ \left( \myvec{x_1 \\ 1/3 \\ 1/6 + x_1 \\ 2/3}_{\textstyle ,} \myvec{1/3 \\ 1/6 + x_1 \\ 1/3 + x_1 \\ 2/3} \right) : 1/6 < x_1 \leq 1/4 \right\}$ \\ \hline

 $\mathsf{D_4}$ & \smallpad{\begin{ytableau}[\mathsf]
    {02} & {03} & \mybar{14} & \mybar{24} \\
    \none & {23} & {34} & \mybar{04} \\
    \none & \none & {13} & {01}\\
    \none & \none & \none & {12}
\end{ytableau}} & $\left\{ \left( \myvec{x_1 \\ 2x_1 - 1/6 \\ 1/6 + x_1 \\ 2/3}_{\textstyle ,} \myvec{2x_1 - 1/6 \\ 1/6 + x_1 \\ 1/3 + x_1 \\ 1/6 + 2x_1} \right) : 1/6 < x_1 \leq 1/4 \right\}$ \\ \hline

 $\mathsf{G_3}$ & \smallpad{\begin{ytableau}[\mathsf]
    {03} & {24} & \mybar{14} & {04} \\
    \none & {12} & {01} & {34} \\
    \none & \none & {13} & {02}\\
    \none & \none & \none & {23}
\end{ytableau}} & $\left\{ \left( \myvec{4/20 \\ 5/20 \\ 7/20 \\ 13/20}_{\textstyle ,} \myvec{7/20 \\ 8/20 \\ 11/20 \\ 13/20} \right) \right\}$ \\ \hline

$\mathsf{G_4}$ & \smallpad{\begin{ytableau}[\mathsf]
    {12} & \mybar{04} & \mybar{14} & \mybar{03} \\
    \none & {24} & {13} & {02} \\
    \none & \none & {01} & {34}\\
    \none & \none & \none & {23}
\end{ytableau}} & $\left\{ \left( \myvec{2/20 \\ 7/20 \\ 8/20 \\ 11/20}_{\textstyle ,} \myvec{5/20 \\ 9/20 \\ 11/20 \\ 12/20} \right) \right\}$ \\ \hline
\end{longtable}

\begin{longtable}{|c|c|l|}
\hline
     & $\Delta \in \M(2,2)$ & \vpad{\mathbf{XY}\!_{2,2}(\Delta)} \\ \hline

$\mathsf{B_4}$ & \smallpad{\begin{ytableau}[\mathsf]
    {24} & \mybar{03} & {04} & \mybar{02} \\
    \none & {01} & {13} & {14} \\
    \none & \none & {12} & {23}\\
    \none & \none & \none & {34}
\end{ytableau}} & $\left\{ \left( \myvec{1/5 \\ x_2 \\ 1/5 + x_2 \\ 3/5}_{\textstyle ,} \myvec{3/5 - x_2 \\ 4/5 - x_2 \\ 3/5 \\ 1-x_2} \right) : 3/10 < x_2 < 2/5 \right\}$ \\ \hline

 $\mathsf{B_5}$ & \smallpad{\begin{ytableau}[\mathsf]
    {12} & \mybar{03} & {04} & \mybar{02} \\
    \none & {24} & {13} & {14} \\
    \none & \none & {01} & {23}\\
    \none & \none & \none & {34}
\end{ytableau}} & $\left\{ \left( \myvec{x_1 \\ 1/5 + x_1 \\ 2/5 + x_1 \\ 3/5}_{\textstyle ,} \myvec{1/5 \\ 3/5 - x_1 \\ 3/5 \\ 4/5 - x_1} \right) : 1/10 < x_1 < 1/5 \right\}$ \\ \hline

 $\mathsf{F_4} $ & \smallpad{\begin{ytableau}[\mathsf]
    {12} & {23} & {03} & \mybar{24} \\
    \none & {34} & \mybar{04} & {14} \\
    \none & \none & {02} & {13}\\
    \none & \none & \none & {01}
\end{ytableau}} & $\left\{ \left( \myvec{1/8 \\ x_2 \\ 1/4 + x_2 \\ 5/8}_{\textstyle ,} \myvec{x_2 - 1/8 \\ 1/4 \\ 1/4 + x_2 \\ 3/8 + x_2} \right) : 1/4 < x_2 < 3/8 \right\}$ \\ \hline
\end{longtable}

\bibliographystyle{amsplain}

\bibliography{References}

\end{document}